\pgfplotsset{compat=1.18}
\begin{document}

\title{A dynamic model of congestion}

\author[H. A. Chang-Lara]{H\'ector A. Chang-Lara}
\address{Department of Mathematics, CIMAT, Guanajuato, Mexico}
\email{hector.chang@cimat.mx}

\author[S. D. Zapeta-Tzul]{Sergio D. Zapeta-Tzul}
\address{Department of Mathematics, University of Minnesota, Twin Cities, USA}
\email{zapet001@umn.edu}

\begin{abstract}
We revisit the classic problem of determining optimal routes in a graph for transporting two given distributions defined on its nodes, originally studied by Wardrop and Beckmann in the 1950s. The global congestion profile at any given time defines a dynamic metric on the graph, for which the routes must be geodesics. Our first contribution is the introduction of a dynamic version of the Beckmann problem, for which we derive the corresponding discrete partial differential equations governing the evolution of the system. These equations enable us to estimate the size of the support of the edge flow. Finally, we present some numerical simulations to illustrate the behavior of efficient equilibria in a dynamic setting with non-local interactions.
\end{abstract}
\subjclass{90B20, 49K99}
\keywords{Wardrop equilibria, traffic congestion, Beckmann problem}

\maketitle


\section{Introduction}

We address the problem of transporting two given distributions, defined over the nodes of a given graph, within a finite and discrete time horizon. The primary objective is to model the behavior of rational agents who aim to minimize their individual travel costs. The congestion profile at any given moment determines the cost of traversing each edge of the graph. This notion, originally formalized by Wardrop in the 1950s and published in \cite{wardrop1952road}, is known as \textit{Wardrop equilibria}. Additionally, the model aims to satisfy an efficiency assumption by minimizing the expected distance for the given \textit{transport plan}, as measured by a \textit{Kantorovich functional}. All these concepts will be carefully revisited in the preliminary section of this article, immediately after this introduction.

Once we establish the precise definitions of the problem outlined in the previous paragraph, two main challenges become apparent about how to compute the equilibria in practice. First, this is a problem rooted in game theory, with a much weaker existence theory as compared with classical optimization. Each agent seeks to minimize their own travel cost, but their collective choices influence the metric used to measure those costs. Second, the natural variable of the model—probability distributions over the set of paths on the graph—resides in a space of large dimension, usually out of reach for numerical implementations.

Following Wardrop's work, Beckmann, McGuire, and Winsten presented in \cite{beckmann1955studies} an equivalent characterization of the \textit{long-term congestion problem} in terms of an optimization problem over edge flows—functions defined on the edges of the graph—finding a way around the two main challenges in the previous paragraph. This problem, now known as the \textit{Beckmann problem}, is recognized as an important tool in the theory of optimal transport. For a detailed discussion on this connection, we recommend Chapter 4 in the book by Santambrogio \cite{MR3409718}. For the reader's convenience, most of our notation and concepts align with those established in this reference.

Section 4.4.1 of Santambrogio's book discusses the \textit{static} version of the problem relevant to this article, also referred to as the long-term problem. In the aforementioned treatment, agents constantly move with a given flow on the graph. In our \textit{dynamic} perspective, as time progresses, the agents adjust their routes to complete the transportation task under equilibrium and efficiency assumptions. Moreover, we also allow for non-local interactions, meaning that the cost in a given edge is not only determined by the edge flow in such edge, but also by the global edge flow. These non-local assumption becomes relevant in practice if we want to model the influence of flows in opposite directions over the same edge; or the intersection of two roads, as the one illustrated in Figure \ref{fig:intersection}.

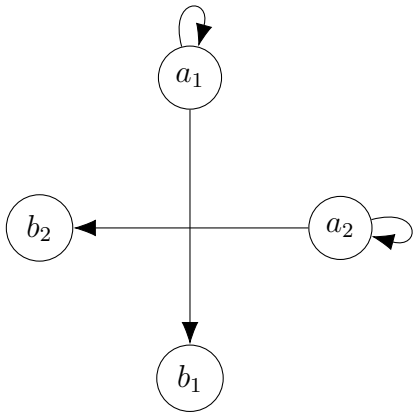
\begin{figure}
    \centering
\begin{tikzpicture}[every loop/.style={}]
  \node[circle, draw] (1) at (0,2) {$a_1$};
  \node[circle, draw] (2) at (2,0) {$a_2$};
  \node[circle, draw] (3) at (0,-2) {$b_1$};
  \node[circle, draw] (4) at (-2,0) {$b_2$};

  \draw[-{Latex[length=3mm]}] (1) -- (3);
  \draw[-{Latex[length=3mm]}] (2) -- (4);

  \path[-{Latex[length=3mm]}] (1) edge [loop above] node {} ();
  \path[-{Latex[length=3mm]}] (2) edge [loop right] node {} ();
\end{tikzpicture}
    \caption{A graph modelling the intersection of two roads. In order to describe the congestion effects of this intersection, one should consider that the cost of crossing the edge $(a_1,b_1)$ should depend on the flows over the edges $(a_1,b_1)$ and $(a_2,b_2)$.}
    \label{fig:intersection}
\end{figure}

One of the main contribution of this work is to introduce a dynamic version of the Beckmann problem and derive the corresponding (discrete) partial differential equations (PDEs) that govern the equilibria. These equations consist of a conservation law or divergence constrains, and what we identify as a \textit{constitutive relation} that results from the critical equation of the Beckmann functional with a corresponding Lagrange multiplier. Details are given in Section \ref{sec:cons_rel}, while a connection with very degenerate and non-local elliptic equation is discussed in Section \ref{sec:ellip_eqns}.

To reach this PDE formulation of the problem we provide a detailed discussion of the construction in a very general setup in Section \ref{sec:prelim} and \ref{sec:BP}. While much of this may be already available in the literature, with perhaps only minor modifications from our part, we include it in this work for the sake of completeness and pedagogical clarity. A comprehensive review of the literature can be found in \cite{correa2011wardrop}, see also \cite{MR2870230} for the continuous counterpart.

The main idea behind the dynamic formulation is to extend the model into a phase space that includes the time dimension. This approach is discussed in Section \ref{sec:dm}. The challenge of the dynamic problem arises from the fact that extending the long-term problem does not preserve the structure of a long-term problem in the extended graph. We present two ways to formulate this model as a Beckmann-type optimization problem and derive the corresponding set of equations. The first approach involves generalizing the notion of the Beckmann problem, for which we devote the Section \ref{sec:BP}. The second strategy requires introducing a further extension of the graph, along with a corresponding long-term problem that turns out to be equivalent to the dynamic model, as discussed in Section \ref{sec:aux_cons}.

As an application of the general theory developed in Section \ref{sec:BP}, we are able to estimate the size of the support of an edge flow minimizing the Beckmann functional, see Theorem \ref{thm:fin_prop} and Corollary \ref{cor:fin_prop} for the static case, and Theorem \ref{thm:fin_prop2} for the dynamic one. From the PDE perspective, this result guarantees the existence of \textit{free boundaries} in the solutions of our equations. Moreover, we provide in Theorem \ref{thm:dyn_ext} and Corollary \ref{cor:5}, a criterion that allows to check whenever a solution of the dynamic Beckmann problem is allowed to be extended in time by zero or not.

To the best of our knowledge, this article may be the first one to establish bounds on the support of the edge flow. The method of proof is inspired from techniques in optimal transport that we detail in the Section \ref{sec:beckeq_to_wareq}, \ref{sec:bound_spt_EF}, and \ref{sec:sym}. In the Section \ref{sec:beckeq_to_wareq}, we show that paths supported in the positivity set of any solution of the constitutive relation must be geodesics. We use this result in Section \ref{sec:bound_spt_EF} to give an estimate on the size of the support of the flow in terms of the support of its divergence. In Section \ref{sec:sym}, we find necessary and sufficient geometric conditions for the edge flow to satisfy the constitutive relation.

To illustrate the theory, we analyze several examples all along the paper. In particular, Section \ref{sec:dyn_ex2} examines one of the simplest scenarios: the intersection of two roads, as shown in Figure \ref{fig:intersection}. The task is to transport an initial amount of masses $m_1$ and $m_2$ from $a_1$ and $a_2$ respectively to $b_1$ and $b_2$ within $T$ time steps. A surprising finding from our analysis is that the equations governing the edge flow are equivalent to a system of \textit{discrete obstacle problems}. We provide as well the results of some numerical experiments that illustrate the behavior of the solutions (see Figure \ref{fig:2roads}).

\subsection{Related work}

The algorithmic and computational treatment of discrete congestion problems is a matured discipline, with numerous contributions from game theory, operation research, and optimization. Some books on the subject include \cite{yang2005mathematical,lawphongpanich2006mathematical,sandholm2010population,ni2015traffic}. In contrast, the continuous counterpart of the theory had to wait for the development of an appropriate analytical framework, which was provided by the theory of optimal transport in the 1990s (see \cite{MR1100809,MR1738163}) and mean field games in the 2000s (see \cite{1272542,MR2346927,4303232} and \cite{MR2269875,MR2271747,MR2295621}).

Some of the first works treating the continuous and stationary congestion models are those by Carlier, Jimenez and Santambrogio \cite{MR2407018}, followed by \cite{MR2525222} for the short-term problem, and \cite{MR2651987} for the long-term problem. A discrete-to-continuous limit was established by Baillon and Carlier in \cite{MR2928377}, and the augmented Lagrangian numerical implementation by Benamou and Carlier is presented in \cite{MR3395203}. In the dynamic setting we find some more recent developments such as the work by Gangbo, Li, and Mou \cite{MR4039140}, Graber, Mészáros, Silva, and Tonon \cite{MR3960798}, Mazanti and Santambrogio \cite{MR3986796}, and Cabrera \cite{cabrera2022optimaltransportationprincipleinteracting}. Interesting connections with free boundary problems, such as the Hele-Shaw flow, are discussed in the survey \cite{MR3883985}. 

Similar to the present work, all of the articles mentioned in the previous paragraph share the common feature of formulating a PDE that governs equilibrium configurations. The regularity theory for non-local and very degenerate equations, in both stationary and dynamic contexts, is a growing field of research (see for instance \cite{MR3133426,MR3296496,MR3705371,MR4566688}). We hope this contribution helps to further disseminate the connections between these non-local and very degenerate equations and encourages additional analysis within the community.

A distinguishing feature of our model, compared to the dynamic models recently studied in \cite{MR4039140,MR3960798,cabrera2022optimaltransportationprincipleinteracting}, is that in our case, the corresponding continuity equation does not transport $\mu$ to $\nu$ exactly at the end of the given time interval, say $[0,T]$. In the continuous setting, we find that in \cite{MR3960798,cabrera2022optimaltransportationprincipleinteracting} the continuity equation for the density $\r\colon \R^n\times[0,T]\to [0,\8)$, being transported by the vector field $\mathbf v\colon \R^n\times[0,T]\to\R^n$ is given by
\[
\p_t\rho + \operatorname{div}(\rho\mathbf v) = 0, \qquad \rho|_{t=0}=\mu, \qquad \rho|_{t=T}=\nu.
\]
This is also the continuity equation that one considers for the Benamou-Brenier approach to optimal transport \cite{MR1738163,MR3395203}, or \cite{MR4039140} in the discrete setting.

In our case, the analogue situation would be instead
\[
\p_t\rho + \operatorname{div}(\rho \mathbf v) \leq 0, \qquad \rho|_{t=0}=\mu, \qquad \rho|_{t=T}=0, \qquad \int_0^T \operatorname{div}(\rho \mathbf v)dt = \mu-\nu.
\]
(Actually, $\rho|_{t=T}=0$ can be deduced from the other conditions and $\r\geq0$ at every time). In other words, the vector field $\mathbf v$ takes the initial distribution $\mu$ to $\nu$ but is allowed to progressively finish this job at any intermediate time. This is rigorously explained for the discrete setting in Section \ref{sec:dm} through the construction of the set of transport plans $\Pi(\mu,\nu)^T$ over the extended graph.

In both cases, the pair $(\r,\mathbf v)$ (or $(\rho,\rho \mathbf v)$) serves as the variable of an optimization problem. In our model, by progressively removing the density that has reached its target, the cost distinguishes two densities: $\r$, representing the portion still in transit, and
\[
\eta(t) := \m - \rho(t) - \int_0^t \operatorname{div}(\rho\mathbf v)ds,
\]
representing the portion that has completed the transportation task and complements the conservation law of $\r$
\[
\p_t \eta = - \p_t\r - \operatorname{div}(\rho \mathbf v) \geq 0.
\]

The work by Gangbo, Li, and Mou \cite{MR4039140} is closely related to ours, as it is also formulated on a graph (over continuous time). A key distinction is that their model considers not only the flows along the edges at each moment in time but also the mass at the nodes. In \cite{MR4039140}, the cost associated with any given edge is determined by the mass at the endpoints of that edge. An intriguing question arises regarding how these models could be related at the discrete level, in a manner similar to the connections between the Benamou-Brenier problem, congestion, and optimal transport, which are known in the continuous models.

The work by Mazanti and Santambrogio \cite{MR3986796} can be seen as the continuous analogue of a particular case in our article. Their goal is to model a population attempting to escape a given domain through its boundary. The resulting equations can be seen as a non-local analogue of a dynamic eikonal equation. In our approach, we avoid the delicate aspects of analysis in infinite-dimensional spaces, allowing us to present more general models and features that, in the discrete setup, include those in \cite{MR3986796}. Our first reference to this connection appears in Remark \ref{rmk:MR3986796}.

The work by Brasco and Petrache \cite{Brasco2014} focuses on extending classical equivalences between total variation minimization, Kantorovich duality, and path-based transports to broader functional spaces (namely, the dual of Sobolev spaces). Our work introduces a dynamic Beckmann problem on graphs, where the agents' routing choices under time-dependent congestion are governed by Wardrop equilibria. Both contributions emphasize the deep connections between optimization, duality, and PDE-like structures; Brasco and Petrache by generalizing continuous models and our work by proving similar dualities for a dynamic discrete model.

\textbf{Acknowledgement:} We would like to thank Ryan Hynd, Edgard Pimentel, and Daniel Hernández for their helpful feedback on S. Zapeta-Tzul's thesis. S. Zapeta-Tzul was supported by the CONAHCyT-MEXICO scholarship 797093. H. Chang-Lara was supported by the CONAHCyT-MEXICO grant A1-S-48577.

\section{Preliminaries}\label{sec:prelim}

\subsection{Stochastic geodesic problem}

The standard (deterministic) geodesic problem consists of computing the distance and paths of minimum length between two points on a given graph. Its stochastic counterpart posses a similar question between probability distributions on the nodes. An efficient solution of the geodesic problem aims to find probability distributions on the set of geodesics that also minimizes the Kantorovich functional. In this section we give precise notions of these problems.

A directed graph is a pair $G=(N,E)$, where $N$ is the set of nodes and $E\ss N\times N$ is the set of edges that connect these nodes. For an edge $e=(x,y) \in N\times N$, we denote $e^- := x$, $e^+ := y$, and $-e = (y,x)$. We say that $G$ is undirected or symmetric if and only if $E = -E := \{-e \in N\times N\ | \ e\in E\}$.

\textbf{We will always assume in this work that $N$ is a finite set.}

A path on $G$ is a finite ordered list of consecutive nodes $\w = (x_0,\ldots,x_\ell)$, such that $e_i = (x_{i-1},x_i) \in E$ for $i\in\{1,\ldots,\ell\}$. We denote by $\w^- := x_0$ and $\w^+ := x_\ell$ the starting and final nodes of the path. A path with with $(\ell+1)$ nodes, allowing repetitions, is said to have length $\ell$. This definition allows the trivial paths of the form $\w=(x)$ with zero length. For a path of length $\ell\geq 1$, we can also refer it by listing its consecutive edges $\w = (e_1,\ldots,e_\ell)$.

We denote by $\operatorname{Path}(G)$ the set of all paths in $G$, and for any $x,y\in N$
\[
\operatorname{Path}_{xy}(G) := \{\w\in\operatorname{Path}(G)\ |\ \w^-=x, \w^+=y\}.
\]

A loop is a path of length $\ell\geq 1$ such that $\w_+=\w_-$. A simple path is one that does not repeat any node, in other words it does not contain a loop. We denote by $\operatorname{SPath}(G)$ the set of simple paths, and $\operatorname{SPath}_{xy}(G) = \operatorname{Path}_{xy}(G)\cap \operatorname{SPath}(G)$. Notice that set of paths is in general infinitely countable meanwhile the set of simple paths is finite.

We say that a node $x \in \w$ if $x$ appears in the list of nodes of $\w$. Similarly, we say that the edge $e \in \w$, if $e$ appears in the list of edges of $\w$. Given $A\ss E$, we say that $\w\ss A$ if for every $e\in \w$, one has that $e\in A$.

Given $\xi\colon E\to[0,\8)$, we define the length $L_\xi:\operatorname{Path}(G) \to [0,\8)$ by
\[
L_\xi(\w) := \sum_{e\in \w} \xi(e).
\]
If $e\in E$ appears multiple times in $\w$, then $\xi(e)$ appears with the same multiplicity in the sum above. In the case that $\w = (x)$ is a trivial path, $L_\xi(\w)=0$ by default. We may also refer to $\xi$ as a metric on the graph $G$.

For a pair of nodes $x,y\in N$, the (directed) distance $d_\xi(x,y)$ between them is defined as the minimum of the lengths of all paths from $x$ to $y$:
\[
d_\xi(x,y) := \min\{ L_\xi(\w) \ | \ \w \in \operatorname{Path}_{xy}(G)\}.
\]
A path $\w$ that realizes the distance between its end points is called a geodesic. We denote the sets of geodesics as $\operatorname{Geod}(G,\xi)$. The set of geodesics between two given nodes $x,y\in N$ is denoted by $\operatorname{Geod}_{xy}(G,\xi) := \operatorname{Geod}(G,\xi)\cap \operatorname{Path}_{xy}(G)$.

If $\operatorname{Path}_{xy}(G)=\emptyset$ we just say that $d_\xi(x,y)=+\8$. Given that the graphs in consideration are finite, we have that $\operatorname{Geod}_{xy}(G,\xi)=\emptyset$ is equivalent to $\operatorname{Path}_{xy}(G)=\emptyset$.

A well studied problem consists on computing $\operatorname{Geod}_{mn}(G,\xi)$ between two given nodes $m,n\in N$. A standard approach consists on solving the dynamic programming equation (usually implemented by iterative methods)
\[
\begin{cases}
\displaystyle u(x) = \min_{e^-=x} \{\xi(e)+u(e^+)\} \text{ if } x\neq n,\\
u(n) = 0.
\end{cases}
\]
As a result we obtain the distance function to the target node $n$ as $u(x) = d_{\xi}(x,n)$. The geodesics can then be computed from $u$ using the Pontryagin principle: $\w \in \operatorname{Path}_{mn}(G)$ is a geodesic if and only if it satisfies that:
\begin{itemize}
    \item $\w^- = m$, $\w^+=n$,
    \item For all $e\in \w$, it holds that $u(e^-) = \xi(e)+u(e^+)$.
\end{itemize}


Given two probability distributions\footnote{Given a countable set $X$ we let
\[
\mathcal P(X) := \{p\colon X\to[0,1] \ | \ \textstyle \sum_{x\in X} p(x) = 1\}. 
\]
These define all the probabilities over the sigma-algebra of all the subsets of $X$.} over the set of nodes, $\mu,\nu\in \mathcal P(N)$, a natural question would be to find a \textit{path profile} $q\in \mathcal P(\operatorname{Path}(G))$ that \textit{connects $\mu$ to $\nu$}, and is \textit{supported on geodesics}. This is a generalization of the previous case which arises when\footnote{Given $E\ss X$, we denote the indicator function of $E$ by $\mathbbm 1_E\colon X\to \R$ such that
\[
\mathbbm 1_E(x) = \begin{cases}
    1 \text{ if } x\in E,\\
    0 \text{ otherwise}.
\end{cases}
\]
If $E=\{x_0\}$ we just let $\mathbbm 1_{x_0} := \mathbbm 1_{\{x_0\}}$, and if $E=X$ we let $\mathbbm 1 := \mathbbm 1_X$.} $\m = \mathbbm 1_m$ and $\nu = \mathbbm 1_n$


Given $\mu,\nu\in \mathcal P(N)$, we say that a path profile $q\in \mathcal P(\operatorname{Path}(G))$ connects $\mu$ to $\nu$ if and only if for every $x,y\in N$
\[
\mathbb P_q(\w^-=x) = \sum_{\w^-=x} q(\w) = \mu(x), \qquad \mathbb P_q(\w^+=y) = \sum_{\w^+=y} q(\w) = \nu(y).
\]

In a similar way that a geodesic satisfies $L_\xi(\w) = d_\xi(\w^-,\w^+)$, there is a simple criterion that determines if a path profile $q\in \mathcal P(\operatorname{Path}(G))$ is supported on geodesics. For it we need to introduce the \textit{transport plan}.

\begin{definition}[Transport plan]
    The transport plan $\gamma[q] \in \mathcal P(N\times N)$ of a path profile $q\in \mathcal P(\operatorname{Path}(G))$ is given by
    \[
    \gamma[q](x,y) := \mathbb P_q(\w^-=x,\w^+=y) = \sum_{\w \in \operatorname{Path}_{xy}(G)}q(\w).
    \]
\end{definition}

\begin{lemma}[Characterization of path profiles supported on geodesics]\label{lem:char}
    Given $\xi \colon E\to [0,\8)$, a path profile $q\in \mathcal P(\operatorname{Path}(G))$ satisfies $\{q>0\}\ss \operatorname{Geod}(G,\xi)$ if and only if $\mathbb E_q(L_\xi)=\E_{\gamma[q]}(d_\xi)$, or equivalently
    \begin{align}\label{eq:1}
    \sum_{\w\in\operatorname{Path}(G)}L_\xi(\w)q(\w) = \sum_{x,y\in N} d_\xi(x,y)\gamma[q](x,y).
    \end{align}
\end{lemma}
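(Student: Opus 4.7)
The plan is to rewrite both sides of \eqref{eq:1} as sums indexed by paths and then reduce the problem to the pointwise inequality $L_\xi(\w) \geq d_\xi(\w^-,\w^+)$, which holds for every path by the very definition of the distance $d_\xi$.

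First, I would unfold the right-hand side of \eqref{eq:1} using the definition of the transport plan $\gamma[q]$ and group by endpoints:
\[
\sum_{x,y\in N} d_\xi(x,y)\,\gamma[q](x,y) = \sum_{x,y\in N} d_\xi(x,y) \sum_{\w\in\operatorname{Path}_{xy}(G)} q(\w) = \sum_{\w\in\operatorname{Path}(G)} d_\xi(\w^-,\w^+)\,q(\w).
\]
Note that for every $\w$ with $q(\w)>0$, the set $\operatorname{Path}_{\w^-\w^+}(G)$ is nonempty, so $d_\xi(\w^-,\w^+)<\infty$ and no $0\cdot\infty$ ambiguities arise.

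Second, I would subtract this from $\mathbb{E}_q(L_\xi)$ to obtain
\[
\mathbb E_q(L_\xi) - \mathbb E_{\gamma[q]}(d_\xi) = \sum_{\w\in\operatorname{Path}(G)} \bigl(L_\xi(\w) - d_\xi(\w^-,\w^+)\bigr)\, q(\w).
\]
By the definition of $d_\xi$ as a minimum over $\operatorname{Path}_{\w^-\w^+}(G)$, each bracket is non-negative, and it vanishes precisely when $\w\in\operatorname{Geod}(G,\xi)$. Since $q(\w)\geq 0$, every summand is non-negative.

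Finally, I would conclude by the standard argument that a sum of non-negative terms vanishes iff each term vanishes. Hence $\mathbb E_q(L_\xi) = \mathbb E_{\gamma[q]}(d_\xi)$ is equivalent to having $\bigl(L_\xi(\w) - d_\xi(\w^-,\w^+)\bigr)\,q(\w) = 0$ for every $\w\in\operatorname{Path}(G)$, which is exactly the statement that $\{q>0\}\subseteq\operatorname{Geod}(G,\xi)$. There is no substantial obstacle here; the only minor point is to make sure the rearrangement of sums is justified, which follows from the non-negativity of every term and Tonelli's theorem for counting measures.
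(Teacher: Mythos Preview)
Your proof is correct and follows essentially the same approach as the paper: both rewrite the right-hand side as $\sum_{\w}d_\xi(\w^-,\w^+)q(\w)$ via the definition of $\gamma[q]$ and then exploit the pointwise inequality $L_\xi(\w)\geq d_\xi(\w^-,\w^+)$. Your packaging as ``a sum of non-negative terms vanishes iff each term vanishes'' is slightly more streamlined than the paper's separate forward argument and contradiction for the reverse direction, but the underlying idea is identical.
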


\begin{proof}
    Notice that the left-hand side of \eqref{eq:1} is always bigger or equal than the right-hand side. Indeed
    \[
    \sum_{\w\in \operatorname{Path}(G)} L_{\xi}(\w)q(\w) \geq \sum_{\w\in \operatorname{Path}(G)} d_{\xi}(\w^-,\w^+)q(\w).
    \]
    By partitioning the paths according to their initial and final points
    \[
    \sum_{\w\in \operatorname{Path}(G)} d_{\xi}(\w^-,\w^+)q(\w) = \sum_{x,y\in N} \sum_{\w \in \operatorname{Path}_{xy}(G)} d_{\xi}(x,y)q(\w) = \sum_{x,y\in N} d_{\xi}(x,y)\gamma[q](x,y).
    \]
    The rest of the proof focuses on showing the equivalence with the reverse inequality.
    
    On one hand, if $q$ is supported on geodesics, then
    \[
    \sum_{\w\in \operatorname{Path}(G)} L_{\xi}(\w)q(\w) = \sum_{\w\in \operatorname{Path}(G)} d_{\xi}(\w^-,\w^+)q(\w).
    \]
    We already noticed that the right-hand side above is just the right-hand side in \eqref{eq:1}.
    
    Now we prove the other implication by contradiction. Assume that $q$ satisfies \eqref{eq:1}, but it is not supported on geodesics. That means that there must exists some $\w\in \operatorname{Path}(G)$ such that $q(\w)> 0$ and $L_{\xi}(\w) > d_{\xi}(\w^-,\w^+)$. Given that for every $\w\in \operatorname{Path}(G)$ it always holds that $L_{\xi}(\w) \geq d_{\xi}(\w^-,\w^+)$ we must have that
    \[
    \sum_{\w\in \operatorname{Path}(G)} L_{\xi}(\w)q(\w) > \sum_{\w\in \operatorname{Path}(G)} d_{\xi}(\w^-,\w^+)q(\w) = \sum_{x,y\in N} d_{\xi}(x,y)\gamma[q](x,y),
    \]
    which contradicts \eqref{eq:1}.
\end{proof}

Given $\Gamma\ss \mathcal P(N\times N)$, a more general geodesic problem can be posed by looking for path profiles $q\in \mathcal P(\operatorname{Path}(G))$, supported on geodesics, that also satisfy the assumption $\gamma[q] \in \Gamma$. We will denote the feasible set as
\[
\mathcal Q(\Gamma) := \{q\in \mathcal P(\operatorname{Path}(G)) \ | \ \gamma[q]\in \Gamma\}.
\]

For instance, given $\m,\nu \in \mathcal P(N)$, the geodesic problem from $\mu$ to $\nu$, which is referred to as the long-term problem, arises when
\[
\Gamma = \Pi(\m,\nu) := \{\gamma\in \mathcal P(N\times N) \ | \ \pi^-[\gamma] = \m, \pi^+[\gamma]=\nu\},
\]
where $\pi^\pm: \mathcal P(N\times N)\to \mathcal P(N)$ denote the marginals
\begin{align}
    \label{eq:marg}
    \pi^-[\gamma](x) := \sum_{y\in N} \gamma(x,y), \qquad \pi^+[\gamma](y) := \sum_{x\in N} \gamma(x,y).
\end{align}
Whenever $\Gamma = \{\gamma\}$, then it is known as the short-term problem. These names have been borrowed from \cite[Chapter 4]{MR3409718}.

\begin{remark}\label{rmk:MR3986796}
    Another example can be given whenever the goal is to transport $\m \in \mathcal P(N)$ inside a region $S \ss N$, then one should consider
    \[
    \Gamma = \{\gamma \in \mathcal P(N\times N) \ | \ \pi^-[\gamma] = \mu, \{\pi^+[\gamma]>0\}\ss S\} = \bigcup_{\{\nu>0\}\ss S}\Pi(\mu,\nu).
    \]
    This related with the model presented in \cite{MR3986796}, in a discrete and stationary setting.
\end{remark}

\begin{remark}\label{rmk:capacity}
    One may also want to connect two distributions that are constrained only in their support. Given $S^\pm \ss N$, we have in mind
    \[
    \Gamma = \{\gamma \in \mathcal P(N\times N) \ | \ \{\pi^-[\gamma] > 0\} \ss S^-, \{\pi^+[\gamma]>0\}\ss S^+\} = \bigcup_{\substack{\{\mu>0\}\ss S^-\\\{\nu>0\}\ss S^+}}\Pi(\mu,\nu).
    \]
\end{remark}

For $x,y\in N$, we have that $\operatorname{Path}_{xy}(G) \neq \emptyset$ represents a connectivity condition from $x$ to $y$. A natural way to extend this notion for $\gamma \in \mathcal P(N\times N)$ would be to say that there exists a path profile $q \in\mathcal P(\operatorname{Path}(G))$ such that $\gamma[q]=\gamma$.

The equivalence between $(1)$ and $(4)$ in the following lemma is the generalization of the equivalence between $\operatorname{Path}_{xy}(G)\neq \emptyset$ and $d_\xi(x,y)<\8$ already noticed for the deterministic case.

\begin{lemma}\label{lem:connect}
    The following are equivalent for $\gamma \in \mathcal P(N\times N)$ and $\xi\colon E\to[0,\8)$:
    \begin{enumerate}
        \item $\gamma \in \gamma[\mathcal P(\operatorname{Path}(G))]$,
        \item $\operatorname{Path}_{xy}(G)\neq \emptyset$ for every $(x,y) \in\{\gamma>0\}$,
        \item $d_{\xi}(x,y) < \8$ for every $(x,y) \in\{\gamma>0\}$,
        \item $\sum_{x,y\in N} d_\xi(x,y)\gamma(x,y)<\8$.
    \end{enumerate}
\end{lemma}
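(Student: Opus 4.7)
The natural route is a cyclic chain of implications $(1)\Rightarrow(2)\Rightarrow(3)\Rightarrow(4)\Rightarrow(1)$. This is essentially the probabilistic lift of the deterministic equivalence (already noted in the excerpt) between the existence of a path from $x$ to $y$ and the finiteness of $d_\xi(x,y)$, averaged against $\gamma$.

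For $(1)\Rightarrow(2)$, if $\gamma=\gamma[q]$ then by definition $\gamma(x,y)=\sum_{\omega\in\operatorname{Path}_{xy}(G)}q(\omega)$, so $\gamma(x,y)>0$ forces this sum, and in particular the index set, to be nonempty. For $(2)\Rightarrow(3)$, any $\omega\in\operatorname{Path}_{xy}(G)$ has finitely many edges and $\xi$ is finite-valued, hence $d_\xi(x,y)\leq L_\xi(\omega)<\infty$. For $(3)\Rightarrow(4)$, finiteness of $N$ makes the sum in $(4)$ finite-index; each summand either vanishes (when $\gamma(x,y)=0$, under the convention $0\cdot\infty=0$) or is the product of a finite distance from $(3)$ with a finite probability.

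The only direction requiring an actual construction is $(4)\Rightarrow(1)$. Finiteness of the sum in $(4)$ together with $\gamma(x,y)>0$ forces $d_\xi(x,y)<\infty$, so for each such pair one can pick a geodesic $\omega_{xy}\in\operatorname{Geod}_{xy}(G,\xi)$ (any path would suffice here, but a geodesic will be convenient in later applications). Define
\[
q(\omega):=\begin{cases}\gamma(x,y) & \text{if } \omega=\omega_{xy} \text{ for some } (x,y)\in\{\gamma>0\},\\ 0 & \text{otherwise.}\end{cases}
\]
Because $\sum_{x,y}\gamma(x,y)=1$, we get $q\in\mathcal{P}(\operatorname{Path}(G))$; and since the chosen paths have pairwise distinct endpoint pairs, a direct inspection of the definition of $\gamma[q]$ gives $\gamma[q](x,y)=\gamma(x,y)$ for all $(x,y)\in N\times N$.

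There is no real obstacle in this argument; the only point requiring a bit of care is the convention $0\cdot\infty=0$ used to make sense of the sum in $(4)$ when $\gamma$ is supported strictly inside $\{d_\xi<\infty\}$. It would be worth stating this convention explicitly, since otherwise the equivalence $(3)\Leftrightarrow(4)$ would need to be phrased with the sum restricted to $\{\gamma>0\}$.
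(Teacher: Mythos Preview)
Your proof is correct. The easy implications match the paper's treatment exactly; the only substantive step is the construction of $q$ realizing a given $\gamma$, and here you take a more direct route than the paper. You simply pick one path $\omega_{xy}$ for each $(x,y)\in\{\gamma>0\}$ and set $q=\sum_{(x,y)\in\{\gamma>0\}}\gamma(x,y)\mathbbm 1_{\omega_{xy}}$, observing that paths with distinct endpoint pairs are distinct. The paper instead proves $(2)\Rightarrow(1)$ by induction on $\#\{\gamma>0\}$: it peels off one pair $(x_0,y_0)$ with mass $\gamma_0$, renormalizes the remainder to $\gamma_1=(1-\gamma_0)^{-1}(\gamma-\gamma_0\mathbbm 1_{x_0y_0})$, applies the inductive hypothesis to obtain $q_1$, and recombines $q=(1-\gamma_0)q_1+\gamma_0\mathbbm 1_{\omega_0}$. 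Your argument is shorter and entirely adequate here; the paper's inductive scheme, while heavier for this lemma, is the template it reuses verbatim in the subsequent Lemma~\ref{lem:exis_geo} and in the Smirnov decomposition (Lemma~\ref{lem:smirnov}), where a one-shot construction is no longer available. Your remark about the $0\cdot\infty$ convention is well taken and worth making explicit.
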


\begin{proof}
    The implications $(1)\Rightarrow(2)\Leftrightarrow(3)\Leftrightarrow(4)$ are straightforward, so we focus this proof on $(2)\Rightarrow(1)$. The argument follows by induction on the size of $\{\gamma>0\}$.

    If $\gamma=\mathbbm 1_{x_0y_0}$, then by hypothesis we must have a path $\w_0 \in \operatorname{Path}_{x_0y_0}(G)\neq \emptyset$. In this case we can take $q = \mathbbm 1_{\w_0} \in \mathcal P(\operatorname{Path}(G))$ such that $\gamma[\mathbbm 1_{\w_0}] = \mathbbm 1_{x_0y_0}$.

    Consider now the case where $k:= \#\{\gamma>0\} >1$, and assume that $(1)$ holds for any transport plan supported in a set of size $(k-1)$, and under the hypothesis $(2)$.
    
    Let $x_0,y_0 \in N$ such that $\gamma_0:= \gamma(x_0,y_0) \in (0,1)$, and $\w_0 \in \operatorname{Path}_{x_0y_0}(G)$. Let then $\gamma_1 \in \mathcal P(N\times N)$ such that
    \[
    \gamma_1 := \frac{1}{1-\gamma_0}\1\gamma - \gamma_0\mathbbm 1_{x_0y_0}\2.
    \]
    Then $\{\gamma_1>0\} = \{\gamma>0\}\sm \{(x_0,y_0)\}$ such that $\#\{\gamma_1>0\}=k-1$ and $\gamma_1$ satisfies $(2)$. By the inductive hypothesis there exists $q_1 \in \mathcal P(\operatorname{Path}(G))$ such that $\gamma_1 = \gamma[q_1]$. Letting
    \[
    q := (1-\gamma_0)q_1 + \gamma_0\mathbbm 1_{\w_0} \in \mathcal P(\operatorname{Path}(G)),
    \]
    we get that $\gamma[q] = \gamma$.
\end{proof}

In a similar way as $\operatorname{Path}_{xy}(G) \neq \emptyset$ is equivalent to $\operatorname{Geod}_{xy}(G,\xi)\neq \emptyset$, we present in the next lemma a stochastic extension of this fact. The proof follows the same inductive argument as in the previous lemma with slight modifications.

\begin{lemma}[Existence of geodesic profiles]\label{lem:exis_geo}
    Given $\xi \colon E\to [0,\8)$ and $\gamma \in \gamma[\mathcal P(\operatorname{Path}(G))]$, there exists $q \in\mathcal Q(\{\gamma\})$ such that $\{q>0\}\ss \operatorname{Geod}(G,\xi)$.
\end{lemma}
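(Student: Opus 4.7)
The plan is to mirror the inductive argument used in Lemma \ref{lem:connect}, replacing the step where an arbitrary path is chosen by a step where a geodesic is chosen. Finiteness of $N$ makes this substitution legitimate: by Lemma \ref{lem:connect}, $\gamma \in \gamma[\mathcal P(\operatorname{Path}(G))]$ means $\operatorname{Path}_{xy}(G)\neq \emptyset$ for every $(x,y)\in\{\gamma>0\}$, and since $N$ is finite (and $\xi\geq 0$ so lengths are non-negative), this is equivalent to $\operatorname{Geod}_{xy}(G,\xi)\neq\emptyset$ for every such $(x,y)$.

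First I would handle the base case $\#\{\gamma>0\}=1$. Here $\gamma=\mathbbm 1_{x_0y_0}$, so by the observation above we may pick $\w_0\in\operatorname{Geod}_{x_0y_0}(G,\xi)$ and set $q=\mathbbm 1_{\w_0}$. Clearly $\gamma[q]=\gamma$ and $\{q>0\}=\{\w_0\}\ss\operatorname{Geod}(G,\xi)$.

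For the inductive step, suppose the statement holds for every transport plan supported on a set of size $(k-1)$, and let $\gamma$ satisfy $\#\{\gamma>0\}=k\geq 2$. Since $\sum\gamma=1$ and there are at least two positive entries, I can select $(x_0,y_0)$ with $\gamma_0:=\gamma(x_0,y_0)\in(0,1)$. Pick $\w_0\in\operatorname{Geod}_{x_0y_0}(G,\xi)$, which is non-empty by the discussion above, and define
\[
\gamma_1:=\frac{1}{1-\gamma_0}\bigl(\gamma-\gamma_0\mathbbm 1_{x_0y_0}\bigr)\in\mathcal P(N\times N).
\]
Then $\{\gamma_1>0\}=\{\gamma>0\}\setminus\{(x_0,y_0)\}$, so $\#\{\gamma_1>0\}=k-1$, and since $\{\gamma_1>0\}\ss\{\gamma>0\}$, condition (2) of Lemma \ref{lem:connect} still holds for $\gamma_1$, whence $\gamma_1\in\gamma[\mathcal P(\operatorname{Path}(G))]$. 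Applying the inductive hypothesis yields $q_1\in\mathcal Q(\{\gamma_1\})$ with $\{q_1>0\}\ss\operatorname{Geod}(G,\xi)$. Setting
\[
q:=(1-\gamma_0)q_1+\gamma_0\mathbbm 1_{\w_0}\in\mathcal P(\operatorname{Path}(G)),
\]
the linearity of $\gamma[\cdot]$ gives $\gamma[q]=(1-\gamma_0)\gamma_1+\gamma_0\mathbbm 1_{x_0y_0}=\gamma$, and $\{q>0\}\ss\{q_1>0\}\cup\{\w_0\}\ss\operatorname{Geod}(G,\xi)$.

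There is no real obstacle here: the only subtlety beyond Lemma \ref{lem:connect} is replacing ``pick any path $\w_0$'' by ``pick a geodesic $\w_0$'', which is justified once from the start by the equivalence in Lemma \ref{lem:connect} together with the finiteness of $N$. One small point worth checking is that the choice $\gamma_0\in(0,1)$ is always possible when $k\geq 2$, which follows because $\gamma$ is a probability distribution with at least two positive entries.
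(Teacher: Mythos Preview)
Your proof is correct and follows essentially the same inductive scheme as the paper's: induct on $\#\{\gamma>0\}$, peel off one pair $(x_0,y_0)$, replace it by a geodesic $\w_0$, and apply the inductive hypothesis to the rescaled remainder. Your version is in fact slightly more streamlined, since the paper first invokes Lemma \ref{lem:connect} to produce an auxiliary path profile $q_0$ and then passes to $\gamma[q_1]$, whereas you define the reduced transport plan $\gamma_1$ directly.
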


\begin{proof}
    We proceed by induction on the size of the set $\{\gamma>0\}$. If $\gamma(x_0,y_0)=\mathbbm 1_{x_0y_0}$, we must have that $\operatorname{Path}_{x_0y_0}(G)\neq \emptyset$ and there must exists some $\w_0 \in \operatorname{Geod}_{x_0y_0}(G,\xi)$. In this case we can take $q=\mathbbm 1_{\w_0}$.
    
    Consider now the general case where $k:=\#\{\gamma>0\}>1$, and assume that the result holds for any transport plan in $\gamma[\mathcal P(\operatorname{Path}(G))]$, but supported in a set of size $(k-1)$.
    
    By Lemma \ref{lem:connect}, we can find $q_0 \in \mathcal Q(\{\gamma\})$. Let $x_0,y_0 \in N$ such that $\gamma_0 := \gamma(x_0,y_0) \in (0,1)$, and let $q_1 \in \mathcal P(\operatorname{Path}(G))$ such that
    \[
    q_1 := \frac{1}{1-\gamma_0} \1q_0 - \sum_{\w\in \operatorname{Path}_{x_0y_0}(G)}q_0(\w)\mathbbm 1_\w\2.
    \]
    Then
    \[
    \gamma[q_1] = \frac{1}{1-\gamma_0}\1\gamma - \gamma_0\mathbbm 1_{x_0y_0}\2,
    \]
    so that $\{\gamma[q_1]>0\} = \{\gamma>0\}\sm \{(x_0,y_0)\}$. 
    
    By the inductive hypothesis, there exists $q_2 \in  \mathcal Q(\{\gamma[q_1]\})$ supported on geodesics. Letting $\w_0 \in \operatorname{Geod}_{x_0y_0}(G,\xi)$ we finally construct
    \[
    q := (1-\gamma_0)q_2+ \gamma_0\mathbbm 1_{\w_0} \in \mathcal P(\operatorname{Path}(G)).
    \]
    It verifies that $\gamma[q] = \gamma$, and is supported on geodesics.
\end{proof}

The following observation follows by applying the previous lemma to $\xi=\mathbbm 1$. Recall that $\operatorname{SPath}(G)$ denotes the set of simple paths in $G$, i.e., paths without loops.

\begin{corollary}\label{cor:3}
    Given $\gamma \in \gamma[\mathcal P(\operatorname{Path}(G))]$, there exists $q \in\mathcal Q(\{\gamma\})$ such that $\{q>0\}\ss \operatorname{SPath}(G)$.
\end{corollary}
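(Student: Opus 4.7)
The plan is to take the hint at face value and apply Lemma \ref{lem:exis_geo} to the metric $\xi=\mathbbm 1$ on the edge set, i.e.\ the metric assigning unit length to every edge. Under this choice, for any path $\w=(e_1,\ldots,e_\ell)$, the length $L_\xi(\w)$ is exactly the number of edges of $\w$. Since $\gamma\in \gamma[\mathcal P(\operatorname{Path}(G))]$ by hypothesis, Lemma \ref{lem:exis_geo} delivers some $q\in \mathcal Q(\{\gamma\})$ whose support lies in $\operatorname{Geod}(G,\mathbbm 1)$.

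The remaining task is to observe that $\operatorname{Geod}(G,\mathbbm 1)\subseteq \operatorname{SPath}(G)$. First I would spell out that a non-simple path $\w\in\operatorname{Path}_{xy}(G)$ must contain a loop, that is, there exist indices $i<j$ with $x_i=x_j$. Then the subpath obtained by excising the cycle between positions $i$ and $j$, namely $\w'=(x_0,\ldots,x_i,x_{j+1},\ldots,x_\ell)$, remains in $\operatorname{Path}_{xy}(G)$ and satisfies $L_\mathbbm 1(\w')=L_\mathbbm 1(\w)-(j-i)<L_\mathbbm 1(\w)$, since the excised loop has length $j-i\geq 1$. Hence $\w$ cannot be a geodesic for $\xi=\mathbbm 1$. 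Equivalently, any $\mathbbm 1$-geodesic is simple.

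Combining these two observations, the $q$ produced by Lemma \ref{lem:exis_geo} already satisfies $\{q>0\}\subseteq \operatorname{Geod}(G,\mathbbm 1)\subseteq\operatorname{SPath}(G)$, which is exactly the conclusion of the corollary. There is no real obstacle here: the only point worth being explicit about is that loop removal strictly decreases the $\mathbbm 1$-length, and this is immediate because loops have length at least one by definition.
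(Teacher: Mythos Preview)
Your proof is correct and follows exactly the approach indicated in the paper: apply Lemma \ref{lem:exis_geo} with $\xi=\mathbbm 1$ and observe that $\mathbbm 1$-geodesics are simple because loop removal strictly decreases edge count. The paper states this as a one-line observation, and your argument spells out the details faithfully.
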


The stochastic geodesic problem can also be coupled with a minimization on the transport plan known as the Kantorovich problem.

\begin{definition}[Efficient profile]
    Given $\xi\colon E\to [0,\8)$, we say that a path profile $q \in \mathcal Q(\Gamma)$ is efficient with respect to $\G \ss \mathcal P(N\times N)$ if and only if
    \[
    \gamma[q] \in \argmin_{\gamma\in \Gamma} \sum_{x,y\in N} d_\xi(x,y)\gamma(x,y).
    \]
\end{definition}

Due to the characterization provided in Lemma \ref{lem:char}, we get that a path profile $q \in \mathcal Q(\Gamma)$ is supported on geodesics and efficient with respect to $\G$, if and only if
\[
\sum_{\w\in\operatorname{Path}(G)}L_\xi(\w)q(\w) = \inf_{\gamma\in \Gamma} \sum_{x,y\in N} d_\xi(x,y)\gamma(x,y).
\]

Due to Lemma \ref{lem:connect} we are able to establish the following existence result.

\begin{corollary}[Existence of efficient geodesic profiles]
\label{cor:existence}
Let $\Gamma\ss \mathcal P(N\times N)$ be closed such that $\Gamma \cap \gamma(\operatorname{Path}(G)) \neq \emptyset$. Then there exists $q^* \in \mathcal Q(\Gamma)$ such that $\{q^*>0\}\ss\operatorname{Geod}(G,\xi)$ and
\[
\gamma[q^*] \in \argmin_{\gamma\in \Gamma} \sum_{x,y\in N} d_\xi(x,y)\gamma(x,y).
\]
\end{corollary}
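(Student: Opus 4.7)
The plan is to decouple the two requirements of the corollary: first find some $\gamma^* \in \Gamma$ that minimizes the Kantorovich functional, and then produce a geodesic path profile $q^*$ that realizes $\gamma[q^*]=\gamma^*$ by invoking Lemma \ref{lem:exis_geo}. This separates the optimization question (over the finite-dimensional convex set $\mathcal{P}(N\times N)$) from the disaggregation into paths.

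For the first step, I would note that $N$ is finite, so $\mathcal{P}(N\times N)$ is a compact subset of $\mathbb{R}^{N\times N}$, and $\Gamma$ being closed in it is itself compact. The Kantorovich functional $K(\gamma):=\sum_{x,y\in N} d_\xi(x,y)\gamma(x,y)$ is linear with coefficients in $[0,\infty]$ (where we allow $d_\xi(x,y)=+\infty$ when $\operatorname{Path}_{xy}(G)=\emptyset$, using the convention $\infty\cdot 0 = 0$). The hypothesis $\Gamma\cap\gamma[\mathcal P(\operatorname{Path}(G))]\neq\emptyset$ together with Lemma \ref{lem:connect} produces some $\gamma_0\in\Gamma$ with $K(\gamma_0)<\infty$, so the infimum of $K$ over $\Gamma$ is finite. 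Viewing $K$ as a lower semicontinuous map from the compact set $\Gamma$ into $[0,\infty]$, it attains its infimum at some $\gamma^*\in\Gamma$, and necessarily $K(\gamma^*)\leq K(\gamma_0)<\infty$. Applying Lemma \ref{lem:connect} in the reverse direction then gives $\gamma^*\in\gamma[\mathcal P(\operatorname{Path}(G))]$, which is exactly the hypothesis needed to invoke Lemma \ref{lem:exis_geo}.

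For the second step, Lemma \ref{lem:exis_geo} applied to this $\gamma^*$ directly yields $q^*\in\mathcal Q(\{\gamma^*\})$ with $\{q^*>0\}\subseteq\operatorname{Geod}(G,\xi)$. Since $\gamma[q^*]=\gamma^*\in\Gamma$, we automatically have $q^*\in\mathcal Q(\Gamma)$, and $\gamma[q^*]$ lies in $\operatorname{argmin}_{\gamma\in\Gamma}K(\gamma)$ by construction.

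The only genuinely delicate point I anticipate is handling the possibility that $d_\xi$ takes the value $+\infty$: if one naively states compactness plus continuity the argument looks immediate, but $K$ is only lower semicontinuous in general, so one must either argue via lower semicontinuity on $[0,\infty]$ or restrict the minimization to the compact sublevel set $\{\gamma\in\Gamma: K(\gamma)\leq K(\gamma_0)\}$, on which $K$ is finite and continuous. Everything else is a direct assembly of Lemma \ref{lem:connect} and Lemma \ref{lem:exis_geo} with basic compactness in finite dimensions.
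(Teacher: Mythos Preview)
Your proposal is correct and follows essentially the same two-step strategy as the paper: obtain a minimizer $\gamma^*$ of the Kantorovich functional by a compactness argument, then apply Lemma \ref{lem:exis_geo} to produce the geodesic profile $q^*$. The only cosmetic difference is that the paper restricts the minimization upfront to the compact set $\Gamma\cap\gamma[\mathcal P(\operatorname{Path}(G))]$ (on which $K$ is finite and continuous), whereas you minimize over all of $\Gamma$ via lower semicontinuity and then use Lemma \ref{lem:connect} to place $\gamma^*$ back in $\gamma[\mathcal P(\operatorname{Path}(G))]$; these are equivalent routes to the same conclusion.
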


\begin{proof}
By the equivalence of the first and fourth items in Lemma \ref{lem:connect}, we have that the minimum of $\gamma \in \Gamma \mapsto \sum_{x,y\in N} d_\xi(x,y)\gamma(x,y)$ must be restricted to the set $\Gamma\cap \gamma[\mathcal P(\operatorname{Path}(G))]$. Given that $\mathcal P(N\times N)$ is compact, we get that $\Gamma\cap \gamma[\mathcal P(\operatorname{Path}(G))]$ is also compact, and there exists a minimizer
\[
\gamma^* \in \argmin_{\gamma\in \Gamma} \sum_{x,y\in N} d_\xi(x,y)\gamma(x,y)\ss \gamma[\mathcal P(\operatorname{Path}(G))].
\]
Given that $\gamma^* \in \gamma[\mathcal P(\operatorname{Path}(G))]$, we use Lemma \ref{lem:exis_geo} to find a profile $q^* \in \mathcal Q(\{\gamma^*\})$ supported on geodesics. This profile $q^*$ is by construction an efficient geodesic profile.
\end{proof}

\subsection{Congestion}

The phenomenon of congestion appears when the metric $\xi$ depends on the path profile via the flow that it imposes on each one of its edges.

\begin{definition}[Edge flow]
    The edge flow $i[q]\colon E\to [0,\8)$ of a path profile $q\in \mathcal P(\operatorname{Path}(G))$ is given by
\[
i[q](e) := \mathbb P_q(\w \ni e) = \sum_{\w\ni e} q(\w).
\]
\end{definition}

Starting in this section, we will use $Y^X$ to denote the set of functions $f \colon X\to Y$. In this sense, $f(x)$ is the value of the $x$-coordinate of $f$ and may also be denoted by $f_x$. Usually we will have that $Y\ss\R$, then $Y^X$ inherits the corresponding structures from the Euclidean space. We also denote by $\R_{\geq0}$ the interval $[0,\8)$.

We are given for each edge $e\in E$ a cost function $g_e\colon \R_{\geq0}^E\to [0,\8)$. We may also denote the whole set of costs by $g = (g_e) \colon \R^E_{\geq 0}\to \R^E_{\geq 0}$. The idea is that the path profile $q \in \mathcal P(\operatorname{Path}(G))$ now determines the metric $\xi\colon E\to[0,\8)$ according to
\[
\xi(e) = g_e(i[q]) \text{ for each $e\in E$}.
\]

We say that the cost is \textit{local} at some given edge $e\in E$ if and only if we have that $g_e(i) = g_e(i_e)$, meaning that $g_e$ depends exclusively on the value $i_e$ and not the whole vector $i\in \R^E_{\geq 0}$. Otherwise and in general, we say that the cost is \textit{non-local}.

We use the following notations for the length and distance depending on a given path profile
\[
L_q := L_{g(i[q])}, \qquad d_q := d_{g(i[q])}.
\]
We hope that the context will make clear this abuse of notation.

\begin{definition}[Wardrop equilibrium]\label{def:1}
Given $g\colon \R^E_{\geq 0}\to \R^E_{\geq 0}$, a path profile $q\in \mathcal P(\operatorname{Path}(G))$ is a Wardrop equilibrium if and only if
\[
\{q>0\}\ss \operatorname{Geod}(G,g(i[q])).
\]

We say that the equilibrium is efficient in $\Gamma \ss \mathcal P(N\times N)$ if and only if
\[
\gamma[q] \in \argmin_{\gamma \in \Gamma} \sum_{x,y\in N} d_{q}(x,y)\gamma(x,y).
\]
We denote the set of efficient Wardrop equilibria in $\Gamma$ as $\operatorname{WE}(G,\Gamma,g)$.
\end{definition}

As a consequence of the characterization given by Lemma \ref{lem:char}, we may use the following identities to verify whether a path profile is a Wardrop equilibrium or not.

\begin{corollary}\label{cor:1}
The path profile $q\in \mathcal P(\operatorname{Path}(G))$ is a Wardrop equilibrium if and only if
\[
\sum_{\w\in \operatorname{Path}(G)} L_{q}(\w)q(\w) = \sum_{x,y\in N} d_{q}(x,y)\gamma[q](x,y).
\]
Given $\Gamma \ss \mathcal P(N\times N)$, a path profile $q\in \mathcal Q(\Gamma)$ is an efficient equilibrium in $\Gamma$ if and only if
\[
\sum_{\w\in \operatorname{Path}(G)} L_{q}(\w)q(\w) = \inf_{\gamma\in \Gamma}\sum_{x,y\in N} d_{q}(x,y)\gamma(x,y).
\]
\end{corollary}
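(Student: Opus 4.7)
The plan is to reduce both statements directly to Lemma \ref{lem:char}, applied with the particular metric $\xi = g(i[q])$ determined by the given path profile $q$. With this identification, $L_\xi$ coincides with $L_q$ and $d_\xi$ coincides with $d_q$, so the Wardrop condition $\{q>0\}\subset \operatorname{Geod}(G,g(i[q]))$ is literally the hypothesis of Lemma \ref{lem:char}, and the first equivalence follows with essentially no further work.

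For the second equivalence I would argue the two inequalities, then sandwich them. First I would observe the chain
\[
\sum_{\w\in \operatorname{Path}(G)} L_{q}(\w)q(\w) \;\geq\; \sum_{x,y\in N} d_{q}(x,y)\gamma[q](x,y) \;\geq\; \inf_{\gamma\in \Gamma}\sum_{x,y\in N} d_{q}(x,y)\gamma(x,y),
\]
where the first inequality comes from $L_q(\w)\geq d_q(\w^-,\w^+)$ (as in the opening observation of the proof of Lemma \ref{lem:char}, partitioned by endpoints), and the second from $\gamma[q]\in\Gamma$, guaranteed by the assumption $q\in\mathcal Q(\Gamma)$.

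For the forward direction of the second equivalence, if $q\in \operatorname{WE}(G,\Gamma,g)$, then the first inequality is an equality by the first part of the corollary, and the second inequality is an equality by the efficiency of $\gamma[q]$ in $\Gamma$; combining yields the claimed identity. For the converse, equality of the two extremes forces both intermediate inequalities to be equalities. The first collapsed inequality reduces by the first part of the corollary (equivalently, Lemma \ref{lem:char}) to the fact that $q$ is a Wardrop equilibrium, while the second collapsed inequality says that $\gamma[q]$ realizes the infimum over $\Gamma$, which is exactly the efficiency condition in Definition \ref{def:1}.

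I do not anticipate a real obstacle: the statement is a bookkeeping translation of Lemma \ref{lem:char} into the congestion-dependent notation, together with the standard sandwich argument linking the path-level and transport-plan-level Kantorovich costs. The only point to handle carefully is to make sure that the inequality $\sum L_q(\w)q(\w)\geq \sum d_q(x,y)\gamma[q](x,y)$ is invoked with the metric $g(i[q])$ that depends on $q$ itself; this is harmless because $q$ is fixed throughout the argument, so $g(i[q])$ is a deterministic metric and all properties of $L_\xi$ and $d_\xi$ established earlier apply verbatim.
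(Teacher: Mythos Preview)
Your proposal is correct and follows exactly the route the paper intends: the corollary is stated without proof as an immediate consequence of Lemma \ref{lem:char}, and your argument simply spells out that deduction by applying the lemma with $\xi=g(i[q])$ and then using the sandwich of inequalities coming from $L_q\geq d_q$ and $\gamma[q]\in\Gamma$.
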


The existence of a Wardrop equilibrium $q \in \mathcal Q(\Gamma)$ is non-trivial, in contrast with the proof of Lemma \ref{lem:exis_geo} or Corollary \ref{cor:existence} addressing the existence of efficient geodesic profiles. The challenge is that this is a problem in game theory, rather than a classical optimization. Each geodesic could be understood as the rational choices of the agents looking to minimize the costs of their respective paths, meanwhile their collective decision imposes the metric.

In precise terms one can approach the problem from the perspective of a fixed point statement. Given $\Gamma \ss \mathcal{P}(N\times N)$ and $\xi \colon E\to [0,\8)$, let $T[\xi]$ be the set of solutions to the efficient geodesic problem with the given data:
\[
T[\xi] := \{q\in \mathcal Q(\G) \ | \ \{q>0\}\ss\operatorname{Geod}(G,\xi)\}.
\]
By Lemma \ref{lem:exis_geo} we know that $T[\xi] \neq \emptyset$ if and only if $\mathcal Q(\Gamma) \neq \emptyset$. A Wardrop equilibrium is a fixed point for the equation
\[
q \in T[g(i[q])].
\]
It is not immediate that such equilibria should exist or that the iterations of $T\circ g\circ i$ converge in an appropriated sense.

We will include in the next section a hypothesis that will allow us to reformulate the problems in terms of an optimization one, for which we have available a much more robust theory for the existence of optimal configurations. 


\subsection{The potential}

We say that the congestion model is of potential type if there exists a continuously differentiable function $H \colon \R^E_{\geq0}\to \R$ such that $\nabla H = g$, which means that for every $e\in E$ we get that the partial derivative of $H$ in the $e$-coordinate, which will be denoted by $\p_e H$, equals $g_e$. Potential models are characterized by the symmetry condition provided by Poincaré's lemma.

\begin{lemma}[Poincaré]
Let $g\colon\R^E_{\geq0}\to\R^E$ be continuously differentiable. There exists $H\colon\R^E_{\geq0}\to \R$ twice continuously differentiable satisfying $\nabla H = g$ if and only if for any pair of edges $e_1,e_2\in E$ we have the symmetry condition $\p_{e_1}g_{e_2} = \p_{e_2}g_{e_1}$.
\end{lemma}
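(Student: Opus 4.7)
The plan is to recognize this as an instance of the classical Poincaré lemma on the convex (in fact, star-shaped with respect to $0$) domain $\R^E_{\geq 0}\ss \R^E$. The forward implication is immediate: if $H\in C^2$ satisfies $\nabla H = g$, then Clairaut's theorem on the equality of mixed partials gives
\[
\p_{e_1}g_{e_2} = \p_{e_1}\p_{e_2}H = \p_{e_2}\p_{e_1}H = \p_{e_2}g_{e_1},
\]
so I would dispatch this direction in a single line.

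For the reverse direction, the plan is to give an explicit primitive via a radial line integral from the origin. Concretely, I would define
\[
H(i) := \int_0^1 g(ti)\cdot i\, dt = \sum_{e\in E} i_e \int_0^1 g_e(ti)\, dt, \qquad i\in \R^E_{\geq 0}.
\]
Since $\R^E_{\geq 0}$ is star-shaped with respect to $0$, the segment $\{ti : t\in[0,1]\}$ stays in the domain, so the integrand is continuous in $(t,i)$, making $H$ well defined and continuous. Because $g$ is $C^1$, differentiation under the integral sign is justified, and for a fixed edge $e_0\in E$ I would compute
\[
\p_{e_0}H(i) = \int_0^1 g_{e_0}(ti)\, dt + \sum_{e\in E} i_e \int_0^1 t\, \p_{e_0}g_e(ti)\, dt.
\]
At this point I apply the symmetry hypothesis $\p_{e_0}g_e = \p_e g_{e_0}$ to rewrite the second term and recognize the full $t$-derivative
\[
\frac{d}{dt}\bigl[tg_{e_0}(ti)\bigr] = g_{e_0}(ti) + t\sum_{e\in E} i_e\, \p_e g_{e_0}(ti).
\]
Substituting and applying the fundamental theorem of calculus then yields $\p_{e_0}H(i) = g_{e_0}(i)$, as required.

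Finally, I would note that since $g$ is $C^1$ and $\nabla H = g$, the function $H$ is automatically $C^2$. The only delicate step is the justification of differentiation under the integral sign, which in turn needs uniform boundedness of the integrand and its $i$-derivatives on compact subsets of $\R^E_{\geq 0}$; this follows from the continuity of $g$ and $\nabla g$ on compact sets. I do not expect any serious obstacle beyond being careful that the star-shaped structure of $\R^E_{\geq 0}$ (rather than full openness) suffices, which it does because we only integrate along rays from the origin that remain in the closed positive orthant.
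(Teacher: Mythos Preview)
Your proposal is correct and follows exactly the approach the paper takes: the paper does not spell out a proof of this lemma but immediately afterward records the very same radial integral $H(i)=\int_0^1 g(it)\cdot i\,dt$ as the potential, which is precisely the construction you carry out in detail. Your computation of $\p_{e_0}H$ via differentiation under the integral sign and recognition of the total derivative $\frac{d}{dt}[t\,g_{e_0}(ti)]$ is the standard verification, and the forward direction via Clairaut is the expected one-liner.
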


From the modelling point of view, we may interpret the equality $\p_{e_1}g_{e_2} = \p_{e_2}g_{e_1}$ saying that for any $i \in \R^E_{\geq0}$ fixed, an infinitesimal increment of $\d i$ on the edge $e_1$ has the same effect on the cost for the edge $e_2$, as the effect of an infinitesimal increment of $\d i$ on the edge $e_2$ on the cost for the edge $e_1$.

Under the assumptions of the previous lemma we obtain that the following integral provides a potential for $g$
\[
H(i) = \int_0^1 g(it)\cdot i dt = \int_0^1 \sum_{e\in E} g_e(it)i_e dt.
\]
In the local case ($g_e(i) = g_e(i_e)$) the symmetry condition is always satisfied. Under this assumption, we have that if $G_e$ is a primitive of $g_e$, then $H(i) = \sum_{e\in E} G_e(i_e)$ is a potential for $g$.

Here is the main result of this section. Namely the equivalence of efficient Wardrop equilibria for a potential type model, with an optimization problem in $\mathcal P(\operatorname{Path}(G))$. The following theorem can be compared to its continuous analogue, which is the main result in \cite{MR2407018}, or with the Section 4.4.1 the book \cite{MR3409718}. The particular case for the short term problem $\Gamma = \{\gamma\}$ and local monotone cost is given by the Theorem 4.28 in \cite{MR3409718}.

\begin{theorem}\label{thm:equiv_wardrop}
    Let $\Gamma \ss \mathcal P(N\times N)$ be a convex set, and let $H\in C^1(\R^E_{\geq0})$ be a potential for the cost $g=\nabla H\geq0$. Then,
    \[
    \argmin_{\gamma[q]\in\Gamma} H(i[q]) \ss \operatorname{WE}(G,\Gamma,g).
    \]
    Moreover, the sets are equal if $H$ is convex.
\end{theorem}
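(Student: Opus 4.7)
The plan is to exploit the linearity of $q \mapsto i[q]$ and $q \mapsto \gamma[q]$, combined with the convexity of $\Gamma$, so that the first-order condition for minimizing $H \circ i$ over $\mathcal{Q}(\Gamma)$ becomes, via $\nabla H = g$, precisely the Wardrop equilibrium identity of Corollary~\ref{cor:1}. The two directions then reduce to writing the same Euler--Lagrange inequality from opposite sides, using Lemmas~\ref{lem:char} and~\ref{lem:exis_geo} as the bridge between the edge-side sum $\sum_e g_e(i[q^*])\, i[q](e)$ and the path-side sum $\sum_\omega L_{q^*}(\omega)\, q(\omega)$. These two coincide by the bookkeeping $i[q](e) = \sum_{\omega\ni e} q(\omega)$ and $L_{q^*}(\omega) = \sum_{e\in\omega} g_e(i[q^*])$, i.e.\ $\nabla H(i[q^*]) \cdot i[q] = \mathbb{E}_q(L_{q^*})$.

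For the inclusion $\argmin \subseteq \operatorname{WE}$, I would pick a minimizer $q^*$ and an arbitrary competitor $\tilde q \in \mathcal{Q}(\Gamma)$, form the perturbation $q_\epsilon := (1-\epsilon) q^* + \epsilon \tilde q$ (which lies in $\mathcal{Q}(\Gamma)$ by convexity of $\Gamma$ together with linearity of $\gamma[\cdot]$), and differentiate $H(i[q_\epsilon])$ at $\epsilon = 0^+$ to obtain
\[
\nabla H(i[q^*]) \cdot (i[\tilde q] - i[q^*]) \;\geq\; 0, \quad\text{equivalently}\quad \mathbb{E}_{\tilde q}(L_{q^*}) \;\geq\; \mathbb{E}_{q^*}(L_{q^*}).
\]
Testing first against a geodesic profile $\tilde q \in \mathcal{Q}(\{\gamma[q^*]\})$ supplied by Lemma~\ref{lem:exis_geo}, Lemma~\ref{lem:char} turns the left-hand side into $\sum_{x,y} d_{q^*}(x,y)\,\gamma[q^*](x,y)$; combined with the reverse universal inequality this forces equality, and Corollary~\ref{cor:1} identifies $q^*$ as a Wardrop equilibrium. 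Testing next against a geodesic profile $\tilde q$ with marginal an arbitrary $\gamma \in \Gamma$ yields $\sum d_{q^*}(x,y)\,\gamma[q^*](x,y) \leq \sum d_{q^*}(x,y)\,\gamma(x,y)$, which is efficiency.

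For the converse when $H$ is convex, take $q^* \in \operatorname{WE}(G,\Gamma,g)$ and any $q \in \mathcal{Q}(\Gamma)$, and apply the subgradient inequality
\[
H(i[q]) - H(i[q^*]) \;\geq\; \nabla H(i[q^*]) \cdot (i[q] - i[q^*]) \;=\; \mathbb{E}_q(L_{q^*}) - \mathbb{E}_{q^*}(L_{q^*}).
\]
The universal bound $\mathbb{E}_q(L_{q^*}) \geq \sum d_{q^*}(x,y)\,\gamma[q](x,y)$ combined with the Wardrop-plus-efficiency identities from Corollary~\ref{cor:1}, which give $\mathbb{E}_{q^*}(L_{q^*}) = \sum d_{q^*}(x,y)\,\gamma[q^*](x,y) \leq \sum d_{q^*}(x,y)\,\gamma[q](x,y)$, show the right-hand side is nonnegative. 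The only genuine subtlety is keeping the two rewrites of $\nabla H \cdot i$ as a path expectation aligned with the correct choices of test profile coming from Lemma~\ref{lem:exis_geo}; convexity of $\Gamma$ is used exactly once, for admissibility of $q_\epsilon$, and convexity of $H$ is used exactly once, for the subgradient inequality in the converse.
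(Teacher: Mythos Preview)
Your proof is correct and follows essentially the same approach as the paper: both use the convex perturbation $q_\epsilon = (1-\epsilon)q^* + \epsilon\tilde q$ with $\tilde q$ a geodesic profile from Lemma~\ref{lem:exis_geo}, differentiate to get the variational inequality $\mathbb{E}_{\tilde q}(L_{q^*}) \geq \mathbb{E}_{q^*}(L_{q^*})$, and then invoke convexity of $H$ via the subgradient inequality for the converse. The only cosmetic differences are that the paper establishes the efficient-equilibrium identity \eqref{eq:20} in a single step (testing directly against geodesic profiles for arbitrary $\gamma\in\Gamma$) rather than first proving equilibrium and then efficiency, and it explicitly dispatches the degenerate case $\gamma\notin\gamma[\mathcal P(\operatorname{Path}(G))]$ via Lemma~\ref{lem:connect}, which you should also mention.
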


\begin{proof}
    The proof consists on computing the variational inequality for the minimization of $H\circ i$ under the restriction $q\in \mathcal Q(\Gamma)$. For $q^* \in \mathcal P(\operatorname{Path}(G))$, we have that $q^*\in\argmin_{\gamma[q]\in\Gamma} H(i[q])$ implies that 
    \begin{align}
        \label{eq:20}
    \sum_{\w\in\operatorname{Path}(G)} L_{q^*}(\w)q^*(\w) = \min_{\gamma \in \Gamma} \sum_{x,y\in N} d_{q^*}(x,y)\gamma(x,y).
    \end{align}
    Moreover, if $H$ is convex we also have that \eqref{eq:20} implies $q^*\in\argmin_{\gamma[q]\in\Gamma} H(i[q])$.

    Once this claim is proved, we just have to notice that \eqref{eq:20} is exactly the second identity found in Corollary \ref{cor:1}. This then would settle the proof.

    Notice that \eqref{eq:20} is equivalent to the inequality
    \begin{align}
        \label{eq:2}
    \sum_{\w\in\operatorname{Path}(G)} L_{q^*}(\w)q^*(\w) \leq \min_{\gamma \in \Gamma} \sum_{x,y\in N} d_{q^*}(x,y)\gamma(x,y).
    \end{align}
    The opposite inequality is always true by definition.
    
    Assume first that $q^* \in \argmin_{\gamma[q]\in\Gamma} H(i[q])$ and $\gamma \in \Gamma$. Our goal is to show that
    \[
    \sum_{\w\in\operatorname{Path}(G)} L_{q^*}(\w)q^*(\w) \leq \sum_{x,y\in N} d_{q^*}(x,y)\gamma(x,y).
    \]
    If $\gamma \notin \gamma[\mathcal P(\operatorname{Path}(G))]$, then the right hand side is infinite due to Lemma \ref{lem:connect}. Assume then that $\gamma \in \gamma[\mathcal P(\operatorname{Path}(G))]$. By Lemma \ref{lem:exis_geo} with $\xi^* := g(i[q^*])$, there exists $q \in \mathcal Q(\{\gamma\})$ supported on geodesics of $\xi^*$.
    
    For $t\in[0,1]$ we consider the convex combination $q_t = (1-t)q^*+tq \in \mathcal Q(\Gamma)$ such that $i[q_t] = (1-t)i[q^*]+ti[q]$. Given that $H(i[q_t]) \geq H(i[q^*])$ and using the Taylor expansion of $H$ at $i[q^*]$ we obtain that
    \[
    0 \leq t\nabla H(i[q^*])\cdot(i[q]-i[q^*]) + o(t).
    \]
    Dividing by $t\in (0,1)$ and taking the limit as $t\to0^+$,
    \begin{align*}
        0 &\leq \sum_{e\in E} \p_e H(i[q^*])(i[q](e)-i[q^*](e)),\\
        &= \sum_{e\in E} g_e(i[q^*])\sum_{\w\ni e}(q(\w)-q^*(\w)),\\
        &= \sum_{\w\in \operatorname{Path}(G)} (q(\w)-q^*(\w)) L_{q^*}(\w).
    \end{align*}
    Using that $L_{q^*}(\w) = d_{q^*}(\w^-,\w^+)$ over the support of $q$, and $\gamma[q]=\gamma$
    \begin{align*}
        \sum_{\w\in \operatorname{Path}(G)} L_{q^*}(\w)q^*(\w) &\leq \sum_{\w\in \operatorname{Path}(G)}  d_{q^*}(\w^-,\w^+)q(\w) = \sum_{x,y\in N} d_{q^*}(x,y)\gamma(x,y).
    \end{align*}
    Then we obtain \eqref{eq:2} by minimizing over $\gamma\in \Gamma$.
    
    Assume now that $H$ is convex and $q^*\in \mathcal Q(\Gamma)$ satisfies the inequality \eqref{eq:2}. For any competitor $q \in \mathcal Q(\Gamma)$  we have that
    \[
    \sum_{\w\in\operatorname{Path}(G)} L_{q^*}(\w)q^*(\w) \leq \sum_{x,y\in N} d_{q^*}(x,y)\gamma[q](x,y) \leq \sum_{\w\in \operatorname{Path}(G)} L_{q^*}(\w)q(\w).
    \]
    By the convexity of $H$
    \begin{align*}
        H(i[q]) - H(i[q^*]) &\geq \nabla H(i[q^*])\cdot(i[q]-i[q^*]) = \sum_{\w\in\operatorname{Path}(G)} L_{q^*}(\w)(q(\w)-q^*(\w)) \geq 0.
    \end{align*}
    This shows that $q^*$ minimizes $H\circ i$ and concludes the proof.
\end{proof}


The advantage of having a potential is that we now reduce the problem of finding efficient equilibria (game theory), to an optimization problem.

\begin{corollary}[Existence of efficient Wardrop equilibium]\label{cor:exis_wardrop}
    Let $\Gamma \ss \mathcal P(N\times N)$ be a closed convex set such that $\mathcal Q(\Gamma) \neq \emptyset$, and let $H\in C^1(\R^E_{\geq0})$ be a potential for the cost $g=\nabla H\geq0$. Then,
    \[
    \operatorname{WE}(G,\Gamma,g)\neq \emptyset.
    \]
\end{corollary}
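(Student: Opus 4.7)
The plan is to apply Theorem \ref{thm:equiv_wardrop}, which guarantees the inclusion $\argmin_{\gamma[q]\in\Gamma} H(i[q]) \ss \operatorname{WE}(G,\Gamma,g)$, so it suffices to show that this minimization problem admits at least one minimizer. The main difficulty is that $\operatorname{Path}(G)$ is countably infinite in general due to loops, so $\mathcal P(\operatorname{Path}(G))$ is infinite-dimensional and compactness is not immediate. I would circumvent this by restricting the minimization to the finite set $\operatorname{SPath}(G)$ of simple paths.

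First I would introduce
\[
\mathcal Q^s(\Gamma) := \{q \in \mathcal P(\operatorname{SPath}(G)) \ | \ \gamma[q] \in \Gamma\}.
\]
Since $\mathcal Q(\Gamma) \neq \emptyset$, one can pick any $\gamma \in \Gamma \cap \gamma[\mathcal P(\operatorname{Path}(G))]$, and Corollary \ref{cor:3} produces a simple-path profile realizing $\gamma$, showing $\mathcal Q^s(\Gamma)\neq\emptyset$. Because $\operatorname{SPath}(G)$ is finite, $\mathcal P(\operatorname{SPath}(G))$ is a compact simplex in a finite-dimensional Euclidean space; the closedness of $\Gamma$ together with the linearity of $q \mapsto \gamma[q]$ makes $\mathcal Q^s(\Gamma)$ a closed, hence compact, subset. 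Since $q \mapsto H(i[q])$ is continuous (composition of a linear map with a $C^1$ function), the extreme value theorem yields a minimizer $q^* \in \mathcal Q^s(\Gamma)$.

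Next I would upgrade $q^*$ to a minimizer of $H\circ i$ over the full feasible set $\mathcal Q(\Gamma)$. The key tool is a loop-removal map $\sigma\colon \operatorname{Path}(G)\to \operatorname{SPath}(G)$ that preserves endpoints and extracts a simple subpath whose edges form a subset of the edges of the original path. For $q \in \mathcal Q(\Gamma)$, define $q^s(\omega') := \sum_{\sigma(\w)=\omega'} q(\w)$. One checks that $\gamma[q^s]=\gamma[q]$, so $q^s \in \mathcal Q^s(\Gamma)$, and that $i[q^s] \leq i[q]$ coordinatewise, since $e \in \sigma(\w)$ implies $e \in \w$. Because $g = \nabla H \geq 0$, the potential $H$ is nondecreasing in every coordinate, so $H(i[q^*]) \leq H(i[q^s]) \leq H(i[q])$. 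This shows $q^*$ minimizes $H \circ i$ over all of $\mathcal Q(\Gamma)$, and Theorem \ref{thm:equiv_wardrop} then places $q^*$ in $\operatorname{WE}(G,\Gamma,g)$.

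The main obstacle is precisely this reduction from $\mathcal Q(\Gamma)$ to $\mathcal Q^s(\Gamma)$: everything rests on the sign condition $g \geq 0$, which ensures that collapsing loops can only decrease the potential. Once this is in place, existence reduces to a routine finite-dimensional compactness argument, and no convexity of $H$ is required (only convexity of $\Gamma$, already used to invoke Theorem \ref{thm:equiv_wardrop}).
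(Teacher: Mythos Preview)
Your proposal is correct and follows essentially the same approach as the paper: reduce to profiles supported on $\operatorname{SPath}(G)$ via a loop-erasing push-forward, use finite-dimensional compactness there, and then invoke the monotonicity of $H$ (from $\nabla H\geq 0$) to upgrade the restricted minimizer to a global one before applying Theorem \ref{thm:equiv_wardrop}. The only cosmetic difference is that the paper phrases the upgrade step as a contradiction while you argue it directly.
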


\begin{proof}
    By Theorem \ref{thm:equiv_wardrop} it suffices to show that $\argmin_{\gamma[q]\in \G}H(i[q])\neq \emptyset$. Let
    \[
    \mathcal S := \{q\in \mathcal Q(\Gamma) \ | \ \{q>0\}\ss\operatorname{SPath}(G)\}.
    \]
    We will show that
    \[
    \emptyset \neq \argmin_{q\in \mathcal S}H(i[q]) \ss \argmin_{q\in \mathcal Q(\G)}H(i[q]).
    \]
    
    Let $q \in \mathcal Q(\Gamma)$. By Corollary \ref{cor:3}, there exists $q'\in \mathcal Q(\{\gamma[q]\})$ such that $\{q'>0\}\ss \operatorname{SPath}(G)$. This means that $\mathcal S \neq \emptyset$. We now use that $\mathcal S \ss \mathcal P(\operatorname{SPath}(G))$, with $\operatorname{SPath}(G)$ a finite set, to get that $\mathcal S$ is a compact set. By continuity, $\argmin_{q\in \mathcal S}H(i[q])\neq \emptyset$.

    Assume now by contradiction that there exists some $q^* \in \mathcal Q(\Gamma)$ such that
    \begin{align}
        \label{eq:3}
    H(i[q^*]) < \min_{q\in \mathcal S} H(i[q]).
    \end{align}
    
    For each path $\w\in \{q^*>0\}$, we construct the simple path $S(\w) \in \operatorname{SPath}(G)$ by erasing all the loops in $\w$. Then we let $S_\#q^* \in \mathcal P(\operatorname{SPath}(G))$ be the push-forward of $q^*$ such that for any $\w \in \operatorname{SPath}(G)$
    \[
    S_\#q^*(\w) := \mathbb P_{q^*}(\{S=\w\}) = \sum_{\w' \in S^{-1}(\w)} q^*(\w').
    \]
    The profile $S_\#q^*$ can also be extended to $\mathcal P(\operatorname{Path}(G))$ by setting $q=0$ outside $\operatorname{SPath}(G)$. This results in $S_\#q^*$ being a member of $\mathcal S$ such that $i[S_\#q^*]\leq i[q^*]$. By the monotonicity of $H$, we obtain that $H(i[S_\#q^*])\leq H(i[q^*])$. Therefore we get a contradiction with \eqref{eq:3} and complete the proof.
\end{proof}

Even for this optimization problem, we still face the challenge that the configuration space, given by $\mathcal Q(\G) \cap \mathcal P(\operatorname{SPath}(G))$, has a dimension too large for any practical implementations. However, for the long-term problem with $\Gamma = \Pi(\mu,\nu)$ such that $\{\mu>0\}\cap \{\nu>0\}=\emptyset$, we can reformulate the problem as an equivalent optimization in $\R^E$, a space with drastically smaller dimension.

\section{The Beckmann problem}\label{sec:BP}

The Beckmann problem is a minimization problem posed over flows $i\in \R^E_{\geq0}$ with a restriction on the divergence. The divergence $\operatorname{div} \colon \R^E \to \R^N$ is the linear transformation given by
\[
\operatorname{div}i(x) := \sum_{e^- = x} i(e) - \sum_{e^+ = x} i(e).
\]

\begin{definition}[Beckmann problem]
    Given $\mathcal M \ss \R^N$ and $H\colon \R^E_{\geq0}\to \R$, the Beckmann problem consists of minimizing $H(i)$ under the constraint that $\operatorname{div}i \in \cM$. We denote its set of solutions by
    \begin{align*}
    \operatorname{BP}(G,\cM,H) &:= \argmin_{\operatorname{div}i\in \mathcal M} H(i).
    \end{align*}
\end{definition}

We will see shortly that the the Beckmann problem is closely related with the Wardrop problem for a particular type of feasible set of transport plans.

\begin{definition}\label{def:pi_gm}
    Given $\mathcal M \ss \R^N$, let
    \[
        \Pi(G,\mathcal M) := \{\gamma \in \mathcal P(N\times N) \ | \ \pi^-[\gamma]-\pi^+[\gamma] \in \cM\}.
    \]
\end{definition}

The construction of $\Pi(G,\mathcal M)$ generalizes the one for $\Pi(\m,\nu)$. In particular, $\Pi(\m,\nu) \ss \Pi(G,\{\mu-\nu\})$, and the sets are equal if $\{\mu>0\}\cap \{\nu>0\}=\emptyset$. This claims follows from the following general lemma.

\begin{lemma}\label{lem:pi}
    Given $\mathcal M \ss \R^N$, the following inclusion holds\footnote{Given $a\in \R$, we denote its positive and negative parts respectively as $a^+ = \max\{a,0\}$ and $a^-=\max\{-a,0\}$, such that $a=a^+-a^-$.}
    \[
        \bigcup_{f\in \cM} \Pi(f^+,f^-) \ss \Pi(G,\mathcal M). 
    \]
    Furthermore, equality holds if and only if $\cM \ss \{f\in \R^N \ | \ f^\pm \in \mathcal P(N)\}$.
\end{lemma}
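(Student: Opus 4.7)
The forward inclusion is a direct unpacking of definitions: given $\gamma \in \Pi(f^+, f^-)$ for some $f \in \cM$, the marginals satisfy $\pi^-[\gamma] = f^+$ and $\pi^+[\gamma] = f^-$, hence $\pi^-[\gamma] - \pi^+[\gamma] = f \in \cM$ and $\gamma \in \Pi(G, \cM)$. This step requires no assumption on $\cM$.

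For the sufficiency of the condition, I would take $\gamma \in \Pi(G,\cM)$, set $f := \pi^-[\gamma]-\pi^+[\gamma] \in \cM$, and introduce the residual
\[
r := \pi^-[\gamma] - f^+ = \pi^+[\gamma] - f^-,
\]
where the second equality follows from $\pi^-[\gamma]-\pi^+[\gamma] = f^+ - f^-$. The crucial pointwise estimate uses the disjoint supports of $f^+$ and $f^-$: on $\{f^+>0\}$ the second formula gives $r = \pi^+[\gamma]\geq 0$; on $\{f^->0\}$ the first gives $r = \pi^-[\gamma]\geq 0$; elsewhere either formula returns a nonnegative marginal. Since both $\pi^-[\gamma]$ and $f^+$ belong to $\mathcal P(N)$ by hypothesis, summing $r\geq 0$ over $N$ yields total mass zero, so $r \equiv 0$. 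Thus $\pi^-[\gamma]=f^+$, $\pi^+[\gamma]=f^-$, and $\gamma \in \Pi(f^+,f^-)$.

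For the converse (necessity), I would argue by contrapositive: given $f \in \cM$ with $f^\pm\notin\mathcal P(N)$ in the nondegenerate regime $\sum_{x} f(x) = 0$ and $\alpha := \|f^+\|_1 = \|f^-\|_1 \in [0,1)$, I would construct a witness $\gamma \in \Pi(G,\cM)$ missing from the union. Picking any fully supported $\mu_0\in\mathcal P(N)$, define
\[
\gamma(x,y) := \frac{f^+(x)\,f^-(y)}{\alpha} + (1-\alpha)\,\mu_0(x)\,\mathbbm 1_{y=x}
\]
(with the first term read as $0$ when $\alpha=0$). A direct check gives $\gamma \in \mathcal P(N\times N)$ with $\pi^-[\gamma]-\pi^+[\gamma] = f$, so $\gamma \in \Pi(G,\cM)$, yet $\pi^-[\gamma] = f^+ + (1-\alpha)\mu_0 \neq f^+$. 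Since the divergence of $\gamma$ pins down the only possible $f'\in\cM$ making $\gamma \in \Pi(f'^+,f'^-)$ to be $f'=f$, and that has just been ruled out, $\gamma$ lies outside the union. The main obstacle I anticipate is the remaining degenerate regimes ($\sum_x f(x) \neq 0$ or $\|f^+\|_1 > 1$), where both $\Pi(G,\{f\})$ and $\Pi(f^+,f^-)$ are empty and $f$ is silently irrelevant to the equality; the characterization should be read modulo such vacuous entries of $\cM$.
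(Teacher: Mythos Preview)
Your forward inclusion and your sufficiency argument are essentially identical to the paper's. The paper phrases the key step as $f^+\le\pi^-[\gamma]$ and $f^-\le\pi^+[\gamma]$, then uses that both sides lie in $\mathcal P(N)$ to force equality; this is exactly your residual $r\ge 0$, $\sum_x r(x)=0$ computation, just stated more tersely.

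Where you diverge is on necessity: the paper's proof simply does not address the ``only if'' direction at all --- it stops after establishing sufficiency. Your contrapositive construction is correct in the regime $\sum_x f(x)=0$, $\|f^+\|_1\in[0,1)$, and your closing caveat is on point: if $\cM$ contains an $f$ with $\sum_x f(x)\ne 0$ or $\|f^\pm\|_1>1$, that $f$ contributes $\emptyset$ to both sides, so equality can hold even though $\cM\not\subseteq\{f:f^\pm\in\mathcal P(N)\}$. The literal ``if and only if'' is therefore imprecise; what is actually true is that equality holds iff every $f\in\cM$ for which $\Pi(G,\{f\})\ne\emptyset$ satisfies $f^\pm\in\mathcal P(N)$. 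You have identified a gap that the paper's own proof leaves open.
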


\begin{proof}
    Let $f\in \cM$. From the definitions we have that $\gamma \in \Pi(f^+,f^-)$ if and only if $\pi^\pm[\gamma] = f^{\mp}$. This implies that
    \[
    \pi^-[\gamma]-\pi^+[\gamma] = f^+-f^- = f \in \cM,
    \]
    thus $\gamma \in \Pi(G,\mathcal M)$.

    Let us now assume $\cM \ss \{f\in \R^N \ | \ f^\pm \in \mathcal P(N)\}$. Given $\gamma \in \Pi(G,\cM)$ and $f := \pi^-[\gamma]-\pi^+[\gamma] \in \cM$ we get that $f^+\leq \pi^-[\gamma]$ and $f^-\leq \pi^+[\gamma]$. Since both $f^\pm$ and $\pi^\pm[\gamma]$ belong to $\mathcal P(N)$, these inequalities must actually be equalities. Therefore, $\gamma \in \Pi(f^+,f^-)$.
\end{proof}

The main ingredient to connect the Beckmann and Wardrop problems is given by the following identity.

\begin{lemma}\label{lem:pitodiv}
    For $q\in \mathcal P(\operatorname{Path}(G))$
    \[
    \operatorname{div}i[q] = \pi^-[\gamma[q]]-\pi^+[\gamma[q]].
    \]
\end{lemma}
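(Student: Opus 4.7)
The strategy is to evaluate both sides node by node, exchange the order of summation, and use a telescoping identity along each path. Fix $x \in N$. By the definitions of the divergence and of the edge flow,
\[
\operatorname{div}i[q](x) = \sum_{e^-=x} \sum_{\w \ni e} q(\w) - \sum_{e^+=x} \sum_{\w \ni e} q(\w) = \sum_{\w \in \operatorname{Path}(G)} q(\w)\left( \#\{e \in \w : e^-=x\} - \#\{e \in \w : e^+=x\} \right).
\]
So the whole identity reduces to the following path-wise claim, which I would prove as a standalone lemma: for any $\w = (x_0,\ldots,x_\ell) \in \operatorname{Path}(G)$ and any $x \in N$,
\[
\#\{e \in \w : e^-=x\} - \#\{e \in \w : e^+=x\} = \mathbbm 1_{x_0}(x) - \mathbbm 1_{x_\ell}(x) = \mathbbm 1_{\w^-}(x) - \mathbbm 1_{\w^+}(x).
\]

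To prove this intermediate claim, I would use that the edges of $\w$ are $e_i=(x_{i-1},x_i)$ for $i=1,\ldots,\ell$. Hence $\#\{e \in \w : e^-=x\}$ counts occurrences of $x$ in the list $(x_0,\ldots,x_{\ell-1})$, while $\#\{e \in \w : e^+=x\}$ counts occurrences of $x$ in $(x_1,\ldots,x_\ell)$. The two lists share the middle segment $(x_1,\ldots,x_{\ell-1})$, which contributes equally to both counts and cancels. What remains is exactly $\mathbbm 1_{x_0}(x) - \mathbbm 1_{x_\ell}(x)$, including the edge case $\ell=0$ (trivial path), where both counts are zero and $x_0=x_\ell$. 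This is the only technical point, and it is straightforward once we are careful that multiplicities of edges are counted with multiplicity in $i[q](e)$, in line with the convention already adopted for $L_\xi(\w)$.

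Substituting the claim back and interchanging the sum over $\w$ with the characteristic functions gives
\[
\operatorname{div}i[q](x) = \sum_{\w^-=x} q(\w) - \sum_{\w^+=x} q(\w).
\]
Finally, rewriting each sum by partitioning the paths according to their other endpoint and recognizing $\gamma[q]$ yields
\[
\sum_{\w^-=x} q(\w) = \sum_{y \in N} \sum_{\w \in \operatorname{Path}_{xy}(G)} q(\w) = \sum_{y \in N} \gamma[q](x,y) = \pi^-[\gamma[q]](x),
\]
and symmetrically the second sum equals $\pi^+[\gamma[q]](x)$, by the definitions \eqref{eq:marg}. Since $x$ was arbitrary, this establishes the desired identity. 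The only real obstacle is the telescoping argument on paths with repeated vertices, which is why I would isolate it as a preliminary claim.
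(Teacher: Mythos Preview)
Your proof is correct and follows essentially the same approach as the paper's own proof: both expand the divergence via the definitions, swap the order of summation (Fubini), apply a telescoping/cancellation identity along each path to reduce to $\mathbbm 1_{\{\w^-=x\}}-\mathbbm 1_{\{\w^+=x\}}$, and then partition by endpoints to recognize the marginals of $\gamma[q]$. If anything, your version is more explicit, since the paper simply invokes ``a cancellation identity'' at the key step while you isolate and prove it.
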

    
\begin{proof}
    We start with the definitions of the divergence and the edge flow. For any $x\in N$
    \begin{align*}
        \operatorname{div}i[q](x) &= \sum_{e^-=x}i[q](e)-\sum_{e^+=x}i[q](e)\\
        &= \sum_{e^-=x}\sum_{\w\ni e}q(\w)-\sum_{e^+=x}\sum_{\w\ni e}q(\w)\\
        &= \sum_{e\in E} \sum_{\w\in \operatorname{Path}(G)} q(\w)(\mathbbm 1_{\{e^-=x\}}-\mathbbm 1_{\{e^+=x\}})\mathbbm 1_{\{e\in \w\}}.
    \end{align*}
    By Fubini and a cancellation identity
    \[
        \operatorname{div}i[q](x) = \sum_{\w\in \operatorname{Path}(G)} q(\w)(\mathbbm 1_{\{\w^-=x\}}-\mathbbm 1_{\{\w^+=x\}}).
    \]
    
    From the definition of the transport plan and the marginal $\pi^-$
    \[
        \sum_{\w\in \operatorname{Path}(G)} q(\w)\mathbbm 1_{\{\w^-=x\}} = \sum_{y\in N}\sum_{\w\in \operatorname{Path}_{xy}(G)} q(\w) = \sum_{y\in N} \gamma[q](x,y) = \pi^-[\gamma[q]](x).
    \]
    In a similar way for the the marginal $\pi^+$
    \[
        \sum_{\w\in \operatorname{Path}(G)} q(\w)\mathbbm 1_{\{\w^+=x\}} = \sum_{y\in N}\sum_{\w\in \operatorname{Path}_{yx}(G)} q(\w) = \sum_{y\in N} \gamma[q](y,x) = \pi^+[\gamma[q]](x).
    \]
    This concludes the desired identity.
\end{proof}


As a consequence of the previous lemma, we obtain that
\[
\gamma[q] \in \Pi(G,\cM) \qquad\Leftrightarrow\qquad \operatorname{div} i[q] \in \cM.
\]
Hence we can immediately compare the two optimization problems
\begin{align}\label{eq:min}
\inf_{\operatorname{div} i\in \cM} H(i) \leq \inf_{\gamma[q]\in \Pi(G,\mathcal M)} H(i[q]).
\end{align}

The following theorem states that both problems are indeed equivalent. A discussion of the Beckmann formulation in the local case can also be found on pages 159-161 of \cite{MR3409718}. Our statement allows a broader class of cost and gives more precise description on the relation between the sets $\operatorname{BP}(G,\mathcal M,H)$ and $\argmin_{\gamma[q]\in \Pi(G,\mathcal M)} H(i[q])$, however all these ideas can also be found in \cite{MR3409718}.

\begin{theorem}\label{thm:equiv_opt}
Let $\mathcal M \ss \{f \in \R^N \ | \ f^\pm \in \mathcal P(N)\}$ be a closed set such that $\{i \in \R^E_{\geq0} \ | \ \operatorname{div}i\in \cM\} \neq \emptyset$, and let $H\in C^1(\R^E_{\geq0})$ such that $\nabla H\geq0$. Then,
\[
\operatorname{BP}(G,\mathcal M,H) \neq \emptyset \qquad\text{ and } \qquad \argmin_{\gamma[q]\in \Pi(G,\mathcal M)} H(i[q]) \neq \emptyset,
\]
and
\begin{align}
    \label{eq:main4}
\min_{\operatorname{div} i\in \cM} H(i) = \min_{\gamma[q]\in \Pi(G,\mathcal M)} H(i[q]). 
\end{align}
Moreover,
\[
\argmin_{\gamma[q]\in \Pi(G,\mathcal M)} H(i[q]) = \bigcup_{i^*\in \operatorname{BP}(G,\mathcal M,H)} \{q^* \in \mathcal Q(\Pi(G,\mathcal M)) \ | \ i[q^*]\leq i^*\}. 
\]
\end{theorem}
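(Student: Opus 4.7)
The strategy will follow the pattern of Corollary \ref{cor:exis_wardrop}: I plan to reduce the Beckmann problem to the restricted optimization over path profiles supported on simple paths, and then exploit the monotonicity $\nabla H\geq 0$ together with a flow decomposition to pass between the two formulations. The central ingredient will be the following path decomposition lemma: for any $i\in \R^E_{\geq 0}$ with $f := \operatorname{div} i \in \mathcal M$, there exists $q \in \mathcal Q(\Pi(G,\mathcal M))$ with $\{q>0\}\ss \operatorname{SPath}(G)$ and $i[q] \leq i$. I would prove this by induction on $\#\{f^+>0\}$: picking $x \in \{f^+>0\}$, the identity $\operatorname{div} i(x) = f^+(x) > 0$ guarantees an outgoing edge with $i>0$; at any intermediate node $z\notin \{f^+>0\}\cup\{f^->0\}$, local conservation $\sum_{e^-=z}i(e)=\sum_{e^+=z}i(e)$ allows the greedy extension to continue, and removing any loop that appears preserves the divergence. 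The extraction terminates at some $y\in \{f^->0\}$ yielding a simple path $\omega$; subtracting $\delta\mathbbm 1_\omega$ with $\delta := \min(f^+(x),f^-(y),\min_{e\in \omega} i(e))$ peels off one simple path, and iterating produces the desired $q$, while any residual flow consists of loops which are discarded.

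Granted the lemma, let $\mathcal S := \{q\in \mathcal Q(\Pi(G,\mathcal M)) \ | \ \{q>0\}\ss \operatorname{SPath}(G)\}$. The hypothesis that $\{i \ |\ \operatorname{div} i \in \mathcal M\}\neq \emptyset$ together with the decomposition lemma implies $\mathcal S\neq \emptyset$; since $\mathcal S$ is closed in the finite-dimensional simplex $\mathcal P(\operatorname{SPath}(G))$ (using that $\mathcal M$ is closed), it is compact, and $H\circ i$ attains a minimum $m_{\mathcal S}$ there at some $q_0$. From \eqref{eq:min} one has $\inf_{\operatorname{div} i\in \mathcal M} H(i) \leq \inf_{\gamma[q]\in \Pi(G,\mathcal M)} H(i[q]) \leq m_{\mathcal S}$. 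For the reverse, given any competitor $i$ with $\operatorname{div} i \in \mathcal M$, the lemma yields $q\in \mathcal S$ with $i[q]\leq i$, so $\nabla H\geq 0$ gives $m_{\mathcal S} \leq H(i[q]) \leq H(i)$. Taking the infimum, the three quantities agree, establishing \eqref{eq:main4}. Moreover $i[q_0]$ satisfies $\operatorname{div} i[q_0]\in \mathcal M$ by Lemma \ref{lem:pitodiv}, so $i[q_0]\in \operatorname{BP}(G,\mathcal M,H)$, and $q_0$ lies in the Wardrop-type argmin, so both extremum sets are nonempty.

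The characterization is then essentially bookkeeping. For $\supseteq$, if $i^*\in \operatorname{BP}(G,\mathcal M,H)$ and $q^* \in \mathcal Q(\Pi(G,\mathcal M))$ satisfies $i[q^*]\leq i^*$, then monotonicity gives $H(i[q^*])\leq H(i^*)$, and \eqref{eq:main4} gives $H(i[q^*])\geq \min_{\gamma[q]\in \Pi(G,\mathcal M)}H(i[q])=H(i^*)$, so equality holds and $q^*$ is a minimizer. Conversely, any $q^*$ in the Wardrop-type argmin has $\operatorname{div} i[q^*]\in \mathcal M$ by Lemma \ref{lem:pitodiv} and realizes the common minimum value, hence $i[q^*]\in \operatorname{BP}(G,\mathcal M,H)$ and we may take $i^*=i[q^*]$. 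The main obstacle is the decomposition lemma of the first paragraph: one must carefully justify that the greedy extraction terminates at $\{f^->0\}$ rather than cycling indefinitely, which rests on the divergence balance at each intermediate node, and one must verify that the loop-removal step leaves a valid nonnegative flow without altering the divergence---this is a standard max-flow-style argument but merits careful execution in the stochastic setting where $i$ takes arbitrary real values rather than integer capacities.
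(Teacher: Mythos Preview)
Your approach is essentially identical to the paper's: the decomposition lemma you sketch is precisely the Smirnov decomposition (Lemma \ref{lem:smirnov}), and the rest of the argument---compactness of the feasible set restricted to $\operatorname{SPath}(G)$, monotonicity of $H$, and the two-sided bookkeeping for the argmin characterization---matches the paper's proof closely. The only adjustment needed is the induction variable in your decomposition: $\#\{f^+>0\}$ alone need not decrease when $\delta = \min_{e\in\omega} i(e)$, so the paper inducts instead on the size of $\operatorname{Edge}(i)\cup\{f^+>0\}\cup\{f^->0\}$, which strictly drops at each peel.
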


The following proof relies on a technical lemma, which we will state now and prove in the next section

\begin{lemma}[Smirnov Decomposition]\label{lem:smirnov}
Let $i\in \R^E_{\geq0}$ such that
\[
\text{$\m := (\operatorname{div} i)^+$ and $\nu:=(\operatorname{div} i)^-$ belong to $\mathcal P(N)$.}
\]
Then there exists $q\in \mathcal Q(\Pi(\m,\nu))$ such that $i[q]\leq i$.
\end{lemma}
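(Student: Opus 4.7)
I will mimic the classical flow-decomposition theorem of network flow theory: strip the flow $i$ one elementary source-to-sink path at a time, and assemble $q$ as a convex combination of Dirac masses at those paths. Because only the inequality $i[q]\leq i$ is required, any mass that circulates in directed cycles may simply be discarded along the way. To make the induction close, I will prove the slightly stronger statement that, for \emph{every} $i\in \R^E_{\geq 0}$ (with $\mu:=(\operatorname{div} i)^+$ and $\nu:=(\operatorname{div} i)^-$ not necessarily probability measures), there is a nonnegative measure $q$ on $\operatorname{Path}(G)$ of total mass $|\mu|=|\nu|$ with $\pi^-[\gamma[q]]=\mu$, $\pi^+[\gamma[q]]=\nu$, and $i[q]\leq i$; the lemma is then the specialization $|\mu|=|\nu|=1$. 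The induction will run on
\[
\Phi(i) := \#\{e\in E\ |\ i(e)>0\}+\#\{x\in N \ | \ \mu(x)>0\}.
\]

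\emph{Reducing to acyclic support and extracting one path.} If $\{i>0\}$ contains a directed cycle $c$, set $\beta:=\min_{e\in c} i(e)>0$ and subtract $\beta$ from $i$ along $c$: this preserves $\operatorname{div} i$ (hence $\mu$ and $\nu$) while strictly decreasing $\Phi$, so the inductive hypothesis closes the case. I may therefore assume $\{i>0\}$ is a DAG. If moreover $\mu=0$, a sink-chase in this DAG forces $i\equiv 0$, and $q=0$ works. Otherwise pick $x$ with $\mu(x)>0$ and build a walk $(x=x_0,x_1,\ldots)$ by repeatedly choosing an outgoing edge $e_{k+1}=(x_k,x_{k+1})$ with $i(e_{k+1})>0$. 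Such an edge exists at the source since $\operatorname{div} i(x_0)>0$, and at any interior node $x_k$ (one with $\operatorname{div} i(x_k)\geq 0$) because the positive incoming flow along $e_k$ must be matched by at least as much outgoing flow. Acyclicity forbids revisits, so the walk is simple and must reach some sink $y$ with $\nu(y)>0$ in at most $|N|$ steps; call the resulting simple path $\omega$.

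\emph{Closing the induction.} With $\alpha:=\min(\mu(x),\,\nu(y),\,\min_{e\in\omega}i(e))>0$ and $i':=i-\alpha\mathbbm 1_\omega\geq 0$, simplicity of $\omega$ gives $\operatorname{div}\mathbbm 1_\omega=\mathbbm 1_x-\mathbbm 1_y$, and the disjointness of the supports of $\mu$ and $\nu$ yields $(\operatorname{div} i')^+=\mu-\alpha\mathbbm 1_x$ and $(\operatorname{div} i')^-=\nu-\alpha\mathbbm 1_y$. By the defining minimum of $\alpha$, either an edge leaves $\{i>0\}$ or a node leaves $\{\mu>0\}$, so $\Phi(i')<\Phi(i)$ and the inductive hypothesis furnishes an admissible $q'$; then $q:=q'+\alpha\mathbbm 1_\omega$ has the correct marginals and satisfies $i[q]\leq i$. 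The main conceptual obstacle is the path-extraction step: its correctness depends on the interplay between the divergence balance at interior nodes (which forces any incoming flow to propagate forward) and the acyclicity of $\{i>0\}$ (which prevents the walk from closing up before reaching a sink). Everything else in the argument is routine bookkeeping.
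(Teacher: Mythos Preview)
Your approach is the classical flow-decomposition and is essentially the paper's own (the paper packages your path-extraction step as a separate auxiliary lemma, Lemma~\ref{lem:aux}, and then carries out the same peel-off-one-path induction). There is, however, a gap in your choice of induction quantity: your claim ``either an edge leaves $\{i>0\}$ or a node leaves $\{\mu>0\}$'' fails when $\alpha=\nu(y)$ is the \emph{strict} minimum among $\mu(x)$, $\nu(y)$, and $\min_{e\in\omega}i(e)$. In that case neither count in $\Phi$ drops, so $\Phi(i')=\Phi(i)$ and the induction does not close.

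A concrete instance: nodes $\{a,b,c,d,e,f\}$ with $i(a,c)=i(b,c)=0.5$, $i(c,d)=0.3$, $i(c,e)=0.7$, $i(e,f)=0.4$. The support is acyclic, $\mu=0.5\,\mathbbm 1_a+0.5\,\mathbbm 1_b$, and $\nu=0.3\,\mathbbm 1_d+0.3\,\mathbbm 1_e+0.4\,\mathbbm 1_f$. If your walk from $a$ selects the edge $(c,e)$ at $c$, it terminates at $e$ (since $\operatorname{div} i(e)=-0.3<0$) with $\omega=(a,c,e)$; then $\alpha=\nu(e)=0.3$ is strictly smaller than $\mu(a)=0.5$ and $\min_{e\in\omega}i(e)=0.5$, and after subtracting $\alpha$ along $\omega$ all five edges and both source nodes persist, so $\Phi$ is unchanged.

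The repair is trivial: add $\#\{\nu>0\}$ to $\Phi$. This is precisely what the paper does---its inductive quantity tracks $\{\mu>0\}$, $\{\nu>0\}$, and the active edges jointly---and with that amendment your argument goes through verbatim.
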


\begin{proof}[Proof of Theorem \ref{eq:main4}]
    The hypotheses guarantee that $\Gamma = \Pi(G,\cM) \ss \mathcal P(N\times N)$ is a compact set. Let us now show that $\mathcal Q(\Gamma)\neq \emptyset$. Let $i \in \R^E_{\geq0}$ such that $\operatorname{div}i \in \cM$. By the assumption on $\cM$, we have that $\mu = (\operatorname{div} i)^+, \nu = (\operatorname{div}i)^- \in \mathcal P(N)$. Then by the Smirnov decomposition there exists $q \in \mathcal P(\operatorname{Path}(G))$ such that $\gamma[q] \in \Pi(\mu,\nu) \ss \Pi(G,\cM)$.
    
    From the proof of the Corollary \ref{cor:exis_wardrop} applied to $\Gamma = \Pi(G,\cM)$, we get that
    \[
    \argmin_{\gamma[q] \in \Pi(G,\cM)} H(i[q]) \neq \emptyset.
    \]
    (Notice that this statement does not require $\Gamma$ to be convex, but only compact with $\mathcal Q(\Gamma)\neq \emptyset$).
    
    For every $q^* \in \argmin_{\gamma[q] \in \Pi(G,\cM)} H(i[q])$ one must also have that
    \[
    i[q^*] \in \operatorname{BP}(G,\mathcal M,H).
    \]
    Otherwise, there exists $i \in \R^E_{\geq0}$ with $\operatorname{div}i \in \cM$ such that $H(i)< H(i[q^*])$. Once again by the Smirnov decomposition, there exists $q \in \mathcal P(\operatorname{Path}(G))$ such that $\gamma[q] \in \Pi(G,\cM)$ and $i[q]\leq i$. Given that $\nabla H\geq 0$, we get that $H(i[q])\leq H(i)$ and the following contradiction
    \[
    H(i[q]) \leq H(i) < H(i[q^*]) = \min_{\gamma[q] \in \Pi(G,\cM)} H(i[q]),
    \]

    This previous argument not only shows that $\operatorname{BP}(G,\mathcal M,H) \neq \emptyset$, but also says that the two minima must be equal. Indeed, for $i^* \in \operatorname{BP}(G,\mathcal M,H)$ and $q\in \mathcal Q(\Pi(G,\mathcal M))$, such that $i[q]\leq i^*$, we get that
    \[
    \min_{\operatorname{div} i\in \cM} H(i) = H(i^*) \geq H(i[q]) \geq \min_{\gamma[q]\in \Pi(G,\mathcal M)} H(i[q]).
    \]
    Given that the opposite inequality also holds, as was already pointed out in \eqref{eq:min}, we obtain the desired equality. 

    Moreover, and once again by the previous argument using the Smirnov decomposition
    \begin{align*}
    \argmin_{\gamma[q]\in \Pi(G,\mathcal M)} H(i[q])
    &\ss \{q^* \in \mathcal Q(\Pi(G,\mathcal M)) \ | \ i[q^*]\in \operatorname{BP}(G,\mathcal M,H)\}\\
    &\ss \bigcup_{i^*\in \operatorname{BP}(G,\mathcal M,H)} \{q^* \in \mathcal Q(\Pi(G,\mathcal M)) \ | \ i[q^*]\leq i^*\}. 
    \end{align*}
    On the other hand, if $q \in \mathcal Q(\Pi(G,\mathcal M))$ is such that $i[q]\leq i^*$ for some $i^* \in \operatorname{BP}(G,\mathcal M,H)$, then
    \[
    H(i[q]) \leq H(i^*) = \min_{\operatorname{div} i\in \cM} H(i) = \min_{\gamma[q]\in \Pi(G,\mathcal M)} H(i[q]).
    \]
    In conclusion, $q \in \argmin_{\gamma[q]\in \Pi(G,\mathcal M)} H(i[q])$, which settles the last claim in the theorem.
\end{proof}

A natural question arises: Given $\Gamma \ss \mathcal{P}(N\times N)$, does there exists $\cM \ss \R^N$ such that $\Gamma = \Pi(G,\mathcal M)$?

\begin{lemma}\label{lem:pim}
    Let $\Gamma\ss\mathcal P(N\times N)$. There exists $\cM\in \R^N$ such that $\Gamma = \Pi(G,\mathcal M)$ if and only if
    \begin{align}\label{eq:11}
    (\gamma + \operatorname{ker}(\pi^--\pi^+))\cap \mathcal P(N\times N) \ss \Gamma \text{ for every $\gamma\in \Gamma$}.
    \end{align}
\end{lemma}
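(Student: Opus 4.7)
The strategy is to view $\Pi(G,\cM)$ as the preimage of $\cM$ under the linear map $\Phi\colon\mathcal P(N\times N)\to \R^N$ defined by $\Phi(\gamma):=\pi^-[\gamma]-\pi^+[\gamma]$, so that $\Pi(G,\cM)=\Phi^{-1}(\cM)$. Condition \eqref{eq:11} is then precisely the statement that $\Gamma$ is saturated with respect to the fibers of $\Phi$, i.e., that membership in $\Gamma$ depends only on the value $\Phi(\gamma)$ (and not on $\gamma$ itself, modulo probability normalization).

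For the forward implication, I would assume $\Gamma=\Pi(G,\cM)$ for some $\cM\ss\R^N$ and take any $\gamma\in\Gamma$ together with $\gamma'\in(\gamma+\ker\Phi)\cap\mathcal P(N\times N)$. Then $\Phi(\gamma')=\Phi(\gamma)\in\cM$, and since $\gamma'$ is a probability measure this directly yields $\gamma'\in\Pi(G,\cM)=\Gamma$, giving \eqref{eq:11}.

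For the backward implication, assuming \eqref{eq:11}, the natural candidate is $\cM:=\Phi(\Gamma)\ss\R^N$. The inclusion $\Gamma\ss\Pi(G,\cM)$ is immediate from the definition of $\cM$. For the reverse inclusion, pick any $\gamma'\in\Pi(G,\cM)$, so $\Phi(\gamma')\in\cM=\Phi(\Gamma)$; choose $\gamma\in\Gamma$ with $\Phi(\gamma)=\Phi(\gamma')$. Then $\gamma'-\gamma\in\ker\Phi$, and since $\gamma'\in\mathcal P(N\times N)$, the hypothesis \eqref{eq:11} applied to this $\gamma$ places $\gamma'\in\Gamma$.

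There is no real obstacle here; the content of the lemma is essentially tautological once one recognizes $\Pi(G,\cdot)$ as the preimage operator under $\Phi$. The only mildly delicate point is to make sure the chosen $\cM$ works both ways, which is why selecting $\cM=\Phi(\Gamma)$ (rather than any larger subset of $\R^N$) is the correct move: it is the smallest $\cM$ compatible with $\Gamma\ss\Pi(G,\cM)$, and \eqref{eq:11} is exactly what is needed to prevent $\Pi(G,\cM)$ from being strictly larger than $\Gamma$.
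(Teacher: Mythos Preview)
Your proof is correct and follows essentially the same approach as the paper: both directions are handled identically, using $\cM=(\pi^--\pi^+)(\Gamma)$ for the backward implication and checking that any $\gamma'$ in the same fiber as some $\gamma\in\Gamma$ must lie in $\Gamma$ by \eqref{eq:11}. Your framing via the map $\Phi$ and the language of saturated fibers is a nice conceptual gloss, but the underlying argument is the same as the paper's.
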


\begin{proof}
    Let $\gamma \in \Pi(G,\mathcal M)$ and
    \[
    \bar\gamma \in (\gamma + \operatorname{ker}(\pi^--\pi^+))\cap \mathcal P(N\times N).
    \]
    Then $\pi^-[\bar\gamma]-\pi^+[\bar\gamma] = \pi^-[\gamma]-\pi^+[\gamma] \in \cM$, so that $\bar\gamma \in \Pi(G,\mathcal M)$.

    Assume now that \eqref{eq:11} holds for every $\gamma \in \Gamma$ and let us show that $\Gamma=\Pi(G,\mathcal M)$ for $\cM = (\pi^--\pi^+)(\Gamma)$. Clearly $\Gamma \ss \Pi(G,\mathcal M)$ by construction. For $\bar\gamma \in \Pi(G,\mathcal M)$ we get that $\pi^-[\bar\gamma]-\pi^+[\bar\gamma] = \pi^-[\gamma]-\pi^+[\gamma]$ for some $\gamma \in \Gamma$. Then $k = \bar\gamma-\gamma \in \operatorname{ker}(\pi^--\pi^+)$ shows that
    \[
    \bar\gamma = \gamma+k \in (\gamma + \operatorname{ker}(\pi^--\pi^+))\cap \mathcal P(N\times N) \ss\Gamma,
    \]
    which concludes the proof.
\end{proof}

\subsection{The Smirnov decomposition}\label{sec:smirnov}

The proof of Lemma \ref{lem:smirnov} is constructive and can be used to find the efficient equilibria from the optimal flow. The idea is closely related with the inductive argument in the proof of the Lemma \ref{lem:connect} and the proof of the Lemma \ref{lem:exis_geo}. First we show an preliminary lemma.

\begin{lemma}\label{lem:aux}
Given $i\colon E\to [0,\8)$ and $x\in\{\operatorname{div} i > 0\}$, there exists $\w \in \bigcup_{y\in \{\operatorname{div} i < 0\}}\operatorname{Path}_{xy}(G)$ such that $i(e)>0$ for every $e\in \w$.
\end{lemma}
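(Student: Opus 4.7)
\textbf{Plan for the proof of Lemma \ref{lem:aux}.} The natural strategy is to follow the positive-flow subgraph starting from $x$, and use a mass-balance argument to force the trajectory to eventually reach a node in $\{\operatorname{div} i<0\}$. Concretely, I would introduce the set
\[
R := \{z\in N \ |\ \exists\ \w\in \operatorname{Path}_{xz}(G) \text{ with } i(e)>0 \text{ for every } e\in\w\}.
\]
Because the trivial path $(x)$ is allowed, $x\in R$, so $R\neq\emptyset$; and because $N$ is finite, $R$ is finite. The goal becomes showing that $R\cap \{\operatorname{div} i<0\}\neq\emptyset$, since any element of this intersection is, by construction, reachable from $x$ by a path along positive-flow edges (and it must differ from $x$ since $\operatorname{div} i(x)>0$).

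The key observation about $R$ is that it is forward-closed for the positive-flow subgraph: if $e\in E$ satisfies $e^-\in R$ and $i(e)>0$, then concatenating any witnessing path for $e^-$ with the edge $e$ produces a witnessing path for $e^+$, so $e^+\in R$. I would then proceed by contradiction, assuming $\operatorname{div} i(z)\geq 0$ for every $z\in R$, and compute the total divergence of $R$:
\[
\sum_{z\in R}\operatorname{div} i(z) \;=\; \sum_{e\,:\,e^-\in R} i(e) \;-\; \sum_{e\,:\,e^+\in R} i(e).
\]
Since each edge $e$ contributing to the first sum has $i(e)>0$ and thus $e^+\in R$, that sum is bounded above by the second sum (which is taken over a possibly larger collection of edges, all with nonnegative weights), giving $\sum_{z\in R}\operatorname{div} i(z)\leq 0$. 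But the left side is $\geq \operatorname{div} i(x)>0$ by the assumption and the inclusion $x\in R$, a contradiction.

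Hence some $y\in R$ satisfies $\operatorname{div} i(y)<0$; the defining path $\w\in \operatorname{Path}_{xy}(G)$ from $R$'s definition does the job. The main step to get right is the edge-versus-node bookkeeping in the telescoping identity for $\sum_{z\in R}\operatorname{div} i$, and the use of forward-closure of $R$ to conclude that every positive-flow edge leaving $R$ at the tail also has its head in $R$; beyond that the argument is elementary and uses only the finiteness of $N$.
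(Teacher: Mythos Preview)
Your argument is correct and complete. The only phrasing I would tighten is ``each edge $e$ contributing to the first sum has $i(e)>0$'': literally the first sum ranges over all edges with $e^-\in R$, some of which may have $i(e)=0$; what you mean (and what suffices) is that each edge contributing a \emph{nonzero} amount has $i(e)>0$, hence $e^+\in R$ by forward-closure, hence it also appears in the second sum. With that understood, the inequality $\sum_{z\in R}\operatorname{div} i(z)\le 0$ follows and the contradiction goes through.

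Your route is genuinely different from the paper's. The paper argues by induction on $\#\{i>0\}$: from $x$ it picks an outgoing edge $e_0$ with $i(e_0)>0$, and if $e_0^+\notin\{\operatorname{div} i<0\}$ it subtracts $i(e_0)\mathbbm 1_{e_0}$ to shrink the support and invoke the inductive hypothesis from $e_0^+$; a separate case handles the possibility that the resulting path loops back to $x$, in which case the loop is peeled off and the induction is restarted. Your reachability-set plus total-divergence argument is the standard network-flow device and is shorter and cleaner; it avoids the loop case entirely and uses only finiteness of $N$. The paper's inductive proof, on the other hand, is stylistically aligned with the constructive Smirnov decomposition that immediately follows (Lemma~\ref{lem:smirnov}), where one repeatedly extracts a path and subtracts its contribution, so the two proofs share a common mechanism. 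Either approach is fine for the lemma as stated.
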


\begin{proof}
We proceed by induction on the size of $\{i>0\}\ss E$, the case where $\#\{i>0\}=1$ being trivial. Assume then that $k:= \#\{i>0\} > 1$ and the result holds for any edge flow $j\colon E\to [0,\8)$ with $\#\{j>0\} < k$.

For $x\in\{\operatorname{div} i>0\}$ there is at least one edge $e_0$ with $e_0^-=x\neq e_0^+$ and $i_0:=i(e_0)>0$. If $y:= e_0^+ \in \{\operatorname{div} i<0\}$ we just take the path $\w=(e_0)$ to conclude. Otherwise, we assume that $\div i(y) \geq 0$ and consider $j = i-i_0\mathbbm 1_{e_0}\in \R^E$ with $\#\{j>0\}=k-1$.

We check that
\begin{align*}
&\{\operatorname{div}j >0\} = \begin{cases}
\{\operatorname{div} i>0\}\cup\{y\} \text{ if } \operatorname{div}i(x)>i_0,\\
(\{\operatorname{div} i>0\}\cup\{y\}) \sm \{x\} \text{ if } \operatorname{div}i(x)\leq i_0,
\end{cases}\\
&\{\operatorname{div} j<0\}=\begin{cases}
\{\operatorname{div} i<0\} \text{ if } \operatorname{div}i(x)\geq i_0,\\
\{\operatorname{div} i<0\}\cup\{x\} \text{ if } \operatorname{div}i(x)<i_0.
\end{cases}    
\end{align*}

Then we use the inductive hypothesis to find a path $\w_0$ from $y\in \{\operatorname{div}j >0\}$ to $\{\operatorname{div} j<0\}$ such that $j(e) =i(e)>0$ for any $e \in \w_0$. If this path ends at $\{\operatorname{div} i<0\}$, the concatenation $\w$ that starts with $e$ and then follows $\w_0$ gives the desired path from $x$ to $\{\operatorname{div} i<0\}$.

The remaining possibility is that $\w_0^+=x$, in this case we still consider for the concatenation $\w$ as in the previous paragraph, which in this case becomes a loop. Let $m := \min_{e\in \w} i(e)$ and $j'\in \R^E$ such that
\[
j' := i- mi[\mathbbm 1_\w] = i - m\sum_{e\in\w}\mathbbm 1_{e},
\]
such that $\operatorname{div} j' = \operatorname{div} i$. Given that $\#\{j'>0\}<k$ we obtain by the inductive hypothesis a path from $x\in \{\operatorname{div} j'>0\}$ to $\{\operatorname{div} j'<0\} = \{\operatorname{div} i<0\}$ and conclude the proof.
\end{proof}

\begin{remark}
    The same argument shows that for every $y \in \{\operatorname{div} i<0\}$ there exists $\w \in \bigcup_{x\in \{\operatorname{div} i > 0\}}\operatorname{Path}_{xy}(G)$ such that $i(e)>0$ for every $e\in \w$.
\end{remark}

\begin{proof}[Proof of Lemma \ref{lem:smirnov}]
    We proceed by induction and begin by introducing a few constructions for an arbitrary function $i\colon E\to \R$. For the positive and negative parts of the divergence we denote $\m[i] := (\div i)^+$ and $\nu[i] = (\div i)^-$. For the paths in the support of the edge flow, connecting the support of the positive and negative of the divergence
    \[
    \operatorname{Path}(i) := \{\w\in \operatorname{Path}(G) \ | \ \w\in\{i>0\}, \w^- \in\{\mu[i]>0\}, \w^+ \in\{\nu[i]>0\}\}.
    \]
    Finally, we consider the edges that are used by some path in the previous set
    \[
    \operatorname{Edge}(i) := \{e\in E\ | \ e \in \w \text{ for some } \w \in \operatorname{Path}(i)\}.
    \]
    
    The induction proceeds on the size of the set
    \[
    \mathcal S(i):= \operatorname{Edge}(i) \cup \{\m[i]>0\} \cup \{\nu[i]>0\}.
    \]    
    The base case is the one where $\{\mu[i]>0\}=\{x_0\}$, $\{\nu[i]>0\}=\{y_0\}$, and $\operatorname{Edge}(i)=\{(x_0,y_0)\}$. Whenever this happens, we just let $\w_0 = (x_0,y_0)$ and $q = \mathbbm 1_{\w_0}$.
    
    Assume now that $k:=\#S(i)>3$ and the conclusion holds for any $j\colon E\to [0,\8)$ with $\#\mathcal S(j)<k$ and under the hypothesis of the lemma. Let
    \[
    m := \min\3 \min_{e\in \operatorname{Edge}(i)} i(e), \min_{x\in\{\mu[i]>0\}} \mu[i](x), \min_{y\in\{\nu[i]>0\}} \nu[i](y)\4 \in (0,1].
    \]
    
    If $m=1$ we have that necessarily $\{\mu[i]>0\}=\{x_0\}$, $\{\nu[i]>0\}=\{y_0\}$, and $i=1$ over $\operatorname{Edge}(i)$. By Lemma \ref{lem:aux} there exists $\w_0 \in \operatorname{Path}_{x_0y_0}(G) \cap \operatorname{Path}(i)$. Then once again, $q=\mathbbm 1_{\w_0}$ gives the desired profile.

    Let us assume that $m\in(0,1)$. By construction there must exists $\w_0\in \operatorname{Path}(i)$ such that \[
    m = \min\3\min_{e\in\w_0} i(e),\mu[i](\w_0^-),\nu[i](\w_0^+)\4.
    \]
    Let
    \[
    j := \frac{1}{1-m}\1i - mi[\mathbbm 1_{\w_0}]\2,
    \]
    such that
    \[
    \mu[j] = \frac{1}{1-m}\1\mu[i] - m\mathbbm 1_{\w_0^-}\2 \in \mathcal P(N), \qquad \nu[j] = \frac{1}{1-m}\1\nu[i] - m\mathbbm 1_{\w_0^+}\2 \in \mathcal P(N).
    \]
    By its construction $\# \mathcal S(j) < k$.
    
    By the inductive hypothesis there exists $q_0\in\mathcal Q(\Pi(\mu[j],\nu[j]))$ such that $i[q_0] \leq j$. Let finally $q = (1-m)q_0 + m\mathbbm 1_{\w_0}$. By the linearity of the map $i$ we get that
    \[
    i[q] = (1-m)i[q_0] + mi[\mathbbm 1_{\w_0}] \leq i.
    \]
    Also by linearity
    \[
    \gamma[q] = (1-m)\gamma[q_0] + m\gamma[\mathbbm 1_{\w_0}] = (1-m)\gamma[q_0] + m\mathbbm 1_{\w_0^-\w_0^+}.
    \]
    For the marginals we obtain
    \[
    \pi^-[\gamma[q]] = (1-m)\mu[j] + m\mathbbm 1_{\w_0^-} = \mu[i], \qquad \pi^+[\gamma[q]] = (1-m)\nu[j] + m\mathbbm 1_{\w_0^+} = \nu[i].
    \]
    Hence $q\in \mathcal Q(\Pi(\mu[i],\nu[i]))$, which concludes the proof.
\end{proof}

\subsection{The constitutive relations}\label{sec:cons_rel}

The next lemma presents the variational inequalities for the Beckmann problem in terms of the Lagrange multiplier for the divergence constrain. Such multiplier will be usually denoted by $u\in \R^N$. The discrete gradient $D:\R^N\to \R^E$, transpose to $-\operatorname{div}$, is defined by
\[
Du(e) := u(e^+)-u(e^-).
\]

\begin{lemma}\label{lem:4}
    Let for $k\in\{1,2\}$, $m_k\in \N$, $A_k\colon \R^N\to \R^{m_k}$ linear, $b_k\in\R^{m_k}$, and
    \begin{align*}
    \mathcal M &= \{f \in \R^N \ | \ A_1[f] = b_1, A_2[f] \geq b_2\}.
    \end{align*}
    Given $H\in C^1(\R^E_{\geq0})$, we have that for every $i^*\in \operatorname{BP}(G,\cM,H)$ there exist $u_k^*\in \R^{m_k}$ such that for $u^* = A_1^T [u^*_1] - A_2^T[u_2^*] \in \R^N$
    \begin{align}\label{eq:main2}
    \begin{cases}
        \min\{i^*,\nabla H(i^*) - Du^*\}=0 \text{ in } E,\\
        \min\{A_2[\operatorname{div}i^*]-b_2,u_2^*\}=0,\\
        A_1[\operatorname{div}i^*] = b_1.
    \end{cases}
    \end{align}
    Moreover, if $H$ is convex, then for any $(i,u_1,u_2) \in \R^E\times\R^{m_1}\times\R^{m_2}$ and $u=A_1^T[u_1] - A_2^T[u_2]$ that satisfies the previous equations it must hold that $i \in \operatorname{BP}(G,\cM,H)$.
\end{lemma}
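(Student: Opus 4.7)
The statement is a KKT-type characterization of the Beckmann problem, whose feasible set
\[
F := \{i \in \R^E_{\geq 0} \ | \ A_1[\operatorname{div} i] = b_1,\ A_2[\operatorname{div} i] \geq b_2\}
\]
is polyhedral. My plan is to obtain \eqref{eq:main2} as the first-order necessary optimality conditions through a standard Lagrangian computation, and then verify sufficiency under convexity of $H$ by a direct subgradient argument.

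For the necessity direction I would form the Lagrangian
\[
\mathcal{L}(i,u_1,u_2,\lambda) := H(i) + u_1 \cdot (A_1[\operatorname{div} i] - b_1) + u_2 \cdot (b_2 - A_2[\operatorname{div} i]) - \lambda \cdot i,
\]
with $u_1 \in \R^{m_1}$, $u_2 \in \R^{m_2}_{\geq 0}$, and $\lambda \in \R^E_{\geq 0}$. Using $D^T = -\operatorname{div}$ from the preliminaries, stationarity in $i$ at an optimum $i^*$ becomes $\nabla H(i^*) - Du^* = \lambda^* \geq 0$, where $u^* := A_1^T u_1^* - A_2^T u_2^*$; together with complementary slackness $\lambda^* \cdot i^* = 0$ this is the first line of \eqref{eq:main2} written as a componentwise $\min$, while complementary slackness for $u_2^*$ against $A_2[\operatorname{div} i^*] - b_2 \geq 0$ gives the second line, and primal feasibility the third. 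The main obstacle is producing such multipliers for an arbitrary minimizer $i^*$, since the first half of the lemma does not assume $H$ convex. I would handle this by exploiting that $F$ is polyhedral: the variational inequality $\nabla H(i^*) \cdot (i - i^*) \geq 0$ for all $i \in F$ is the standard first-order condition for $C^1$ optimization over a convex set, and Farkas' lemma yields an explicit Minkowski--Weyl decomposition of the normal cone $N_F(i^*)$ with generators coming from the rows of $A_1$, the active rows of $A_2$, and the active nonnegativity constraints. Reading $-\nabla H(i^*) \in N_F(i^*)$ against this decomposition then produces $u_1^*$, $u_2^*$, $\lambda^*$ with the required signs and complementary slackness.

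For the converse, assume $H$ is convex and $(i,u_1,u_2)$ satisfies \eqref{eq:main2}; note that the first equation forces $i \geq 0$ and the last two force $\operatorname{div} i \in \cM$, so $i$ is feasible. For any competitor $j$ with $\operatorname{div} j \in \cM$, convexity yields
\[
H(j) - H(i) \geq \nabla H(i) \cdot (j-i) = (\nabla H(i) - Du) \cdot (j-i) - u \cdot \operatorname{div}(j-i),
\]
where I used $D^T = -\operatorname{div}$ on the $Du$ term. The first summand on the right is nonnegative since $\nabla H(i) - Du \geq 0$, $j \geq 0$, and $(\nabla H(i) - Du) \cdot i = 0$ by the first line of \eqref{eq:main2}. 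For the second, splitting $u = A_1^T u_1 - A_2^T u_2$, the $u_1$-part vanishes because $A_1[\operatorname{div} j] = A_1[\operatorname{div} i] = b_1$, while the $u_2$-part equals $u_2 \cdot (A_2[\operatorname{div} j] - b_2) - u_2 \cdot (A_2[\operatorname{div} i] - b_2)$, which is $\geq 0$ by the second line of \eqref{eq:main2} together with $u_2 \geq 0$ and $A_2[\operatorname{div} j] \geq b_2$. Hence $H(j) \geq H(i)$ and $i \in \operatorname{BP}(G,\cM,H)$.
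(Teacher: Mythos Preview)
Your proposal is correct and follows essentially the same approach as the paper: both derive \eqref{eq:main2} as the KKT conditions for the Lagrangian you wrote (the paper simply cites \cite[Proposition 3.3.7]{MR3587371} for the existence of multipliers under affine constraints, where you instead sketch the polyhedral normal-cone/Farkas argument), and both prove the converse by the same convexity inequality $H(j)-H(i)\geq \nabla H(i)\cdot(j-i)$ followed by the integration-by-parts split into the $\lambda$-term and the $u_2$-term.
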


We call the first equation in \eqref{eq:main2} the \textit{constitutive relation}. Notice that it can also be written in terms of $g=\nabla H$ as
\[
\min\{i,g(i)-Du\} = 0 \text{ in } E.
\]

\begin{proof}
    The proof consists on computing the Karush–Kuhn–Tucker conditions on a non necessarily convex problem with affine constrains. The transpose of the divergence is computed using the integration by parts formula in Lemma \ref{lem:5} in the appendix section: For every $u \in \R^N$ and $i\in \R^E$
    \[
    u\cdot \operatorname{div}i = \sum_{x\in N} u(x)\operatorname{div} i(x) = -\sum_{e\in E} Du(e)i(e) = - Du\cdot i.
    \]

    In this case we must consider the Lagrangian $L\colon \R^E\times\R^{m_1}\times\R^{m_2}_{\geq0}\times \R^E_{\geq0}\to \R$ given by
    \begin{align*}
    L(i;u_1,u_2,\l)
    &= H(i) + u_1\cdot A_1[\operatorname{div} i] - u_2\cdot A_2[\operatorname{div}i] - \l\cdot i\\
    &= H(i) - (D(A^T[u_1] - A_2^T[u_2]) + \l)\cdot i.
    \end{align*}
    
    By \cite[Proposition 3.3.7]{MR3587371}, we get that for $i^* \in \operatorname{BP}(G,\cM,H)$ there exists $(u_1^*,u_2^*,\l^*)\in\R^{m_1}\times\R^{m_2}_{\geq0}\times \R^E_{\geq0}$ such that
    \[
    \begin{cases}
    \nabla H(i^*) - D(A_1^T[u_1^*] - DA_2^T[u_2^*]) - \l^* = 0 \qquad \text{(stationarity)},\\
    \min\{i^*,\l^*\}=0 \qquad \text{(feasibility and complementary slackness)},\\
    \min\{A_2[\operatorname{div}i^*]-b_2,u_2^*\}=0 \qquad \text{(feasibility and complementary slackness)},\\
    A_1[\operatorname{div} i^*] = b_1 \qquad \text{(feasibility)}.
    \end{cases}
    \]
    Then we replace $u^*=A_1^T[u_1^*] - A_2^T[u_2^*]$ and $\l^* = \nabla H(i^*) - Du^*$ in the second equation to get the constitutive relation.

    Assume now that $H$ is convex and $(i,u_1,u_2)$ satisfies \eqref{eq:main2}. Let $j \in \R^E_{\geq0}$ with $\div j \in \cM$ be a competitor. By the convexity of $H$
    \[
    H(j)-H(i) \geq \nabla H(i)\cdot(j-i).
    \]
    From the constitutive relation we get that $\nabla H(i) = Du+\l$ for some $\l\in \R^E$ such that $\min\{i,\l\}=0$. By integrating by parts
    \[
    H(j)-H(i) \geq (Du+\l)\cdot(j-i) = \l\cdot(j-i) - u\cdot \div(j-i).
    \]
    If we further assume that $u=A_1^T[u_1]-A_2^T[u_2]$ with $A_1[\operatorname{div}(j-i)]=0$, we get that
    \[H(j)-H(i)\geq \l\cdot(j-i) + u_2\cdot A_2[\div(j-i)].\]
    
    Let $e\in E$ and let us recall that $\min\{i,\l\}=0$. If $i_e>j_e$ then $i_e>0$ and $\lambda_e=0$. If instead $j_e\geq i_e$ we get that both factors in $\lambda_e (j_e - i_e)$ are non-negative. In this way we get that $\l\cdot(j-i)\geq 0$.
    
    Similarly, one can also show that if $\min\{A_2[\operatorname{div}i]-b_2,u_2\}=0$ and $A_2[\operatorname{div}j]\geq b_2$ hold, then $u_2\cdot A_2[\div(j-i)]\geq 0$. In conclusion, $i$ must minimize $H$. 
\end{proof}

\subsubsection{The long-term problem}

If $\cM = \{f\}$ for some fixed $f\in \R^N$, we get that $A_1$ can be taken as the identity map in $\R^N$, $b_1=f$, and we can ignore the inequalities constrains (or just set $A_2=0$, $b_2=0$, and get some redundant equations). Notice as well that $\div i = f$ requires that $\sum_{x\in N} f(x)=0$.

The resulting equations in this case turn out to be
\begin{align}
    \label{eq:long_term}
\begin{cases}
    \min\{i,\nabla H(i)-Du\}=0 \text{ in } E,\\
    \operatorname{div}i = f \text{ in } N.
\end{cases}
\end{align}

Recall that this Beckmann problem is the one related with the long-term Wardrop problem for $\Gamma = \Pi(\mu,\nu)$ when $\{\mu>0\}\cap\{\nu>0\}=\emptyset$ and $f=\mu-\nu$.

\subsubsection{The stationary minimal-time mean field game}

Consider the scenario presented in Remark \ref{rmk:MR3986796}, with $\m \in \mathcal{P}(N)$ and $S \subseteq N \setminus \{\m > 0\}$ the given data. Let us recall that the goal is to transport $\m$ inside $S$. Then we have that the feasible set is described by affine constrains on the divergence
\begin{align}\label{eq:M}
\mathcal M
&= \{f\in \R^N \ | \ f=\mu\text{ in } N\sm S, f \leq 0 \text{ in } S\}\\
&= \{f\in \R^N \ | \ \mathbbm 1_{N\sm S}f=\mu, -\mathbbm 1_{S}f \geq 0\},\notag
\end{align}
which leads to the following equations for $u = \mathbbm 1_{N\sm S} u_1 + \mathbbm 1_{S} u_2$
\begin{align}
    \label{eq:eikonal}
    \begin{cases}
    \min\{i,\nabla H(i) - Du\}=0 \text{ in } E,\\
    \min\{-\operatorname{div}i,u_2\} = 0 \text{ in } S,\\
    \operatorname{div} i = \m \text{ in } N\sm S.
\end{cases}
\end{align}
Given that $u$ consists of the contribution of two functions with disjoint supports we can simplify the multiplier to considering just $u$ and reduce the second equation to 
\[
\min\{-\operatorname{div}i,u\} = 0 \text{ in } S.
\]

\subsubsection{The general capacity problem}

As a final example let us now consider $S^\pm \ss N$ with $S^-\cap S^+= \emptyset$. In the Remark \ref{rmk:capacity} we considered
\[
\Gamma = \{\gamma \in \mathcal P(N\times N) \ | \ \{\pi^-[\gamma] > 0\} \ss S^-, \{\pi^+[\gamma]>0\}\ss S^+\} = \bigcup_{\substack{\{\mu>0\}\ss S^-\\\{\nu>0\}\ss S^+}}\Pi(\mu,\nu).
\]
The Beckmann problem is then a generalization of the capacity of $S^-$ with respect to $N\sm S^+$, see for instance \cite[Chapter 2]{MR1411441} for the continuous analogue. In this case we also have that $\Gamma = \Pi(G,\cM)$ for
\[
\cM = \left\{f\in \R^N \ \middle|  \ f\geq 0\text{ in } S^-, f = 0 \text{ in } N\sm(S^-\cup S^+), f \leq 0 \text{ in } S^+, \sum_{x\in S^-}f(x)=1\right\}.
\]
After some simplifications, we get the equations
\begin{align}\label{eq:capacity}
\begin{cases}
    \min\{i,\nabla H(i) - Du\}=0 \text{ in } E,\\
    \min\{\operatorname{div}i,-u\} = 0 \text{ in } S^-,\\
    \operatorname{div}i = 0 \text{ in } N\sm(S^-\cup S^+),\\
    \min\{-\operatorname{div}i,u\} = 0 \text{ in } S^+,\\
    \sum_{x\in S^-}\operatorname{div} i(x) = 1.
\end{cases}
\end{align}

\begin{remark}
    It is possible to deduce the sets of equations in \eqref{eq:eikonal}, and also in \eqref{eq:capacity}, from \eqref{eq:long_term}. In the case of \eqref{eq:eikonal} we consider a new (sink) node $\W \notin N$, and the graph
    \[
    G^\W := (N^\W,E^\W), \qquad N^\W := N\cup\{\W\}, \qquad E^\W := E\cup (S\times\{\W\}).
    \]
    The Beckmann problem for $\Pi(G,\cM)$, with $\cM$ as in \eqref{eq:M}, becomes equivalent to a long-term problem in $G^\W$ for $\Pi(\mu,\mathbbm 1_{\W})$, where $\mu$ is extended by zero at $\W$. The cost function $H \colon \R^{E^\W}\to \R$ is also an extension of $H\colon \R^E\to \R$, such that for $i\in \R^{E^\W}$, we have that $H(i) = H(i')$, where $i'\in \R^E$ denote the $E$-coordinates of $i$:
    \[i'_e=i_e \text{ for every $e\in E$.}\]
    
    Hence, we get the equations
    \[
    \begin{cases}
    \min\{i,\nabla H(i) - Dv\} = 0 \text{ in } E^\W,\\
    \operatorname{div}i = \mu - \mathbbm 1_\W \text{ in } N^\W.
    \end{cases}
    \]
    Using that $\p_e H =0$ for any $e\in S\times\{\W\}$, and letting $u(x) = v(x)-v(\W)$ we deduce \eqref{eq:eikonal}.

    For the case of \eqref{eq:capacity} the idea is very similar, we have to consider not only a sink node connected to $S^+$, but also a source node connected to $S^-$. This strategy will play a central role in the Section \ref{sec:aux_cons}.
\end{remark}

\subsection{From the constitutive relation to Wardrop equilibria}\label{sec:beckeq_to_wareq}

The following lemma brings us back to the geodesic problem, offering sufficient conditions to ensure that any path in $\{i>0\}$ is necessarily a geodesic.

\begin{lemma}\label{lem:pontryagin}
    Let $(i,\xi,u)\in \R^E_{\geq0}\times\R^E_{\geq0}\times\R^N$ such that $\min\{i,\xi-Du\}=0$. Then, for each $\w\ss\{i>0\}$ we have that $\w\in\operatorname{Geod}(G,\xi)$ and
    \[
    d_\xi(\w^-,\w^+) = L_\xi(\w) = u(\w^+)-u(\w^-).
    \]
\end{lemma}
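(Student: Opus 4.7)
The hypothesis $\min\{i,\xi-Du\}=0$ is an edgewise condition: for each $e\in E$, we have $i(e)\geq 0$ and $\xi(e)-Du(e)\geq 0$, with at least one of the two equal to zero. The plan is to exploit this in two steps, mirroring the standard Pontryagin principle already invoked in the deterministic dynamic programming discussion of Section \ref{sec:prelim}: first show that the length of $\w$ is exactly the potential difference $u(\w^+)-u(\w^-)$, then show that no competing path with the same endpoints can do better.

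For the first step, I would fix $\w=(e_1,\ldots,e_\ell)\ss\{i>0\}$ and note that on each $e_k$ we must have $i(e_k)>0$, hence the complementary slackness forces $\xi(e_k)=Du(e_k)=u(e_k^+)-u(e_k^-)$. Summing and using the telescoping property inherited from the fact that consecutive edges of a path satisfy $e_k^+=e_{k+1}^-$, we obtain
\[
L_\xi(\w)=\sum_{k=1}^\ell \xi(e_k)=\sum_{k=1}^\ell \bigl(u(e_k^+)-u(e_k^-)\bigr)=u(\w^+)-u(\w^-).
\]

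For the second step, I would take any competitor $\w'\in\operatorname{Path}_{\w^-\w^+}(G)$ and use only the inequality $\xi-Du\geq 0$, which holds on \emph{every} edge (not just those in $\{i>0\}$). The same telescoping argument then yields
\[
L_\xi(\w')=\sum_{e\in\w'}\xi(e)\geq \sum_{e\in\w'}Du(e)=u(\w^+)-u(\w^-)=L_\xi(\w),
\]
so $d_\xi(\w^-,\w^+)=L_\xi(\w)$ and $\w\in\operatorname{Geod}(G,\xi)$. The trivial case $\ell=0$ is immediate since both sides vanish.

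There is no real obstacle here: the argument is a direct consequence of complementary slackness plus a telescoping sum. The only mild subtlety is to remember that the inequality $\xi\geq Du$ holds globally (it is built into $\min\{\,\cdot\,,\,\cdot\,\}=0$), whereas the equality $\xi=Du$ is available only on edges carrying positive flow; this asymmetry is precisely what makes the comparison with arbitrary competitors work.
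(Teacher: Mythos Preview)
Your proof is correct and follows essentially the same approach as the paper: complementary slackness gives $\xi=Du$ on edges of $\w$, the global inequality $\xi\ge Du$ controls competitors, and telescoping finishes both computations. The paper organizes the argument by splitting into the cases $\w^-=\w^+$ and $\w^-\neq\w^+$, but the underlying logic is identical to yours.
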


\begin{proof}
    Consider $\w \ss \{i>0\}$ and let $x:=\w^-$ and $y:=\w^+$. If $x=y$
    \[
    0 \leq \sum_{e \in \w} \xi(e) = \sum_{e \in \w} Du(e) = 0. 
    \]
    Hence $L_\xi(\w) = 0$ and $\w$ must be a geodesic and $d_\xi(\w^-,\w^+) = L_\xi(\w) = u(\w^+)-u(\w^-)=0$.
    
    Assume otherwise that $x\neq y$, and let $\bar\w \ss \operatorname{Path}_{xy}(G)$ be an arbitrary competitor for the distance. By the given hypothesis
    \[
    \begin{cases}
        \text{$\xi(e) = Du(e)$ for any $e \in \w$},\\
        \text{$\xi(e) \geq Du(e)$ for any $e \in \bar\w$}.
    \end{cases}
    \]
    Then, due to the telescopic identity
    \[
    L_\xi(\w) = \sum_{e\in\w} \xi(e) = \sum_{e \in \w} Du(e) = u(y)-u(x) = \sum_{e \in \bar \w} Du(e) \leq \sum_{e\in\bar\w} \xi(e) = L_\xi(\bar \w).
    \]
    Given that $\bar\w \in \operatorname{Path}_{xy}(G)$ was arbitrary, this means that $\w$ must be a geodesic between $x$ and $y$. Moreover, $d_\xi(x,y) = L_\xi(\w) = u(y)-u(x)$.
\end{proof}

The following corollary can be used to recover Wardrop equilibria from solutions of the constitutive relation and the construction in the Smirnov decomposition (Lemma \ref{lem:smirnov}).

\begin{corollary}\label{cor:beckeq_to_wareq}
    Let $H \in C^1(\R^E_{\geq0})$ be a potential for the cost $g = \nabla H\geq 0$. Consider $(i,u)\in \R^E_{\geq0}\times\R^N$ be such that $\min\{i,\nabla H(i)-Du\}=0$, and let $q \in \mathcal P(\operatorname{Path}(G))$ be such that $i[q]\leq i$. Then $q$ is a Wardrop equilibrium with respect to $g$.
\end{corollary}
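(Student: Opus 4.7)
The plan is to reduce the statement to a single application of Lemma~\ref{lem:pontryagin} applied to the triple $(i, \nabla H(i), u)$.

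First, I would verify that every path $\omega$ in the support of $q$ lies entirely inside $\{i > 0\}$. For any $\omega$ with $q(\omega) > 0$ and any edge $e \in \omega$, the identity $i[q](e) = \sum_{\omega' \ni e} q(\omega')$ gives $i[q](e) \geq q(\omega) > 0$; combined with the hypothesis $i[q] \leq i$ this yields $i(e) > 0$, whence $\omega \subseteq \{i > 0\}$.

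Next, I would invoke Lemma~\ref{lem:pontryagin} with the metric $\xi := g(i) = \nabla H(i)$: the constitutive relation $\min\{i, \nabla H(i) - Du\} = 0$ is exactly its hypothesis, so every $\omega \in \{q > 0\}$ belongs to $\operatorname{Geod}(G, g(i))$ and moreover satisfies
\[
L_{g(i)}(\omega) = u(\omega^+) - u(\omega^-) = d_{g(i)}(\omega^-, \omega^+).
\]

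The main obstacle --- where I expect to spend the most care --- is the final identification of the metric $g(i)$ delivered by Pontryagin with the metric $g(i[q])$ entering Definition~\ref{def:1}. In the Smirnov-decomposition setting motivating this corollary, $q$ is produced from $i$ via Lemma~\ref{lem:smirnov} in such a way that $i[q] = i$ on the edges actually traversed by the support of $q$; the two metrics therefore agree where it matters, the geodesic property transfers verbatim, and the inclusion $\{q > 0\} \subseteq \operatorname{Geod}(G, g(i[q]))$ follows, so that $q$ is a Wardrop equilibrium with respect to $g$.
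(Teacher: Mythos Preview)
Your first two paragraphs are exactly the argument the paper has in mind: the corollary is stated immediately after Lemma~\ref{lem:pontryagin} without proof, and the intended reasoning is to note that any $\omega\in\{q>0\}$ lies in $\{i>0\}$ (since $i[q]\le i$) and then invoke Lemma~\ref{lem:pontryagin} with $\xi=g(i)=\nabla H(i)$.

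You have, however, put your finger on a genuine gap, and your proposed fix does not close it. Lemma~\ref{lem:pontryagin} only yields $\{q>0\}\subseteq\operatorname{Geod}(G,g(i))$, whereas Definition~\ref{def:1} requires $\{q>0\}\subseteq\operatorname{Geod}(G,g(i[q]))$. The Smirnov decomposition (Lemma~\ref{lem:smirnov}) only guarantees $i[q]\le i$, not equality on the edges traversed by the support of $q$; and even equality on those edges would not force $g_e(i[q])=g_e(i)$ when $g$ is non-local. Worse, the statement as literally written is false without an extra hypothesis. Take nodes $a,b,c,d$, edges $e_1=(a,b)$, $e_2=(b,d)$, $e_3=(a,c)$, $e_4=(c,d)$, the (local) potential $H(i)=i_{e_1}+i_{e_2}+\tfrac12 i_{e_3}^2+\tfrac12 i_{e_4}^2$, the flow $i=(1,1,1,1)$, and $u$ given by $u(a)=0$, $u(b)=u(c)=1$, $u(d)=2$. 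The constitutive relation holds, yet for $q=\mathbbm 1_{(e_1,e_2)}$ one has $i[q]=(1,1,0,0)\le i$ and $g(i[q])=(1,1,0,0)$, so the competing path $(e_3,e_4)$ has $g(i[q])$-length $0<2$ and $q$ is \emph{not} a Wardrop equilibrium in the sense of Definition~\ref{def:1}. The corollary becomes correct if one either adds the hypothesis $\nabla H>0$ strictly (which, via the loop argument in the proof of Theorem~\ref{thm:fin_prop}, forces $i[q]=i$ for the Smirnov $q$), or reads the conclusion as $\{q>0\}\subseteq\operatorname{Geod}(G,g(i))$ rather than as a Wardrop equilibrium for $q$ in its own right.
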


\subsection{A bound on the support of the edge flow}\label{sec:bound_spt_EF}

Lemma \ref{lem:pontryagin} can be used to obtain an upper bound on the inner diameter of $\{i>0\}$ whenever $\xi$ is bounded away from zero. We define the inner diameter of a set $A\ss E$ as
\[
\operatorname{in-diam}_\xi(A) := \max \{ L_\xi(\w) \ | \ \w \ss A \text{ and } \w \in \operatorname{Geod}(G,\xi)\}.
\]
We also define the diameter from $A\ss N$ to $B\ss N$ as
\[
\operatorname{diam}_\xi(A,B) := \max_{\substack{x\in A\\y\in B}}d_\xi(x,y).
\]

\begin{corollary}\label{cor:4}
    Let $(i,\xi,u)\in \R^E_{\geq0}\times\R^E_{\geq0}\times\R^N$ such that $\min_{e\in E}\xi(e)>0$, and $\min\{i,\xi-Du\}=0$. Then,
    \[
    \operatorname{in-diam}_\xi(\{i>0\}) \leq \operatorname{diam}_\xi(\{\div i>0\},\{\div i<0\}).
    \]
\end{corollary}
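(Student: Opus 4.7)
The plan is to construct, from any geodesic $\w^* \ss \{i>0\}$ realizing the inner diameter, a larger path $\bar\w \ss \{i>0\}$ with $\bar\w^- \in \{\operatorname{div}i>0\}$ and $\bar\w^+ \in \{\operatorname{div}i<0\}$, and then apply Lemma \ref{lem:pontryagin} to $\bar\w$ to identify $L_\xi(\bar\w)$ with a distance between these two sets.

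First I pick $\w^* \ss \{i>0\}$ with $L_\xi(\w^*) = \operatorname{in-diam}_\xi(\{i>0\})$; if $\w^*$ is trivial the inequality is immediate, so I assume $\w^*$ has length at least one and set $x := \w^{*-}$, $y := \w^{*+}$. Since the first edge of $\w^*$ lies in $\{i>0\}$ and starts at $x$, the outflow at $x$ is strictly positive; similarly the inflow at $y$ is strictly positive. I then extend $\w^*$ forward by a greedy procedure: at a current right-endpoint $y'$, if $y' \in \{\operatorname{div}i<0\}$ I stop; otherwise $\operatorname{div}i(y') \geq 0$, so the identity
\[
\operatorname{div}i(y') = \sum_{e^-=y'} i(e) - \sum_{e^+=y'} i(e)
\]
combined with strictly positive inflow at $y'$, inherited from the edge just appended, forces the outflow at $y'$ to be strictly positive, and I can append an outgoing edge $e'$ with $i(e')>0$. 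The backward extension from $x$ is the dual construction, using $\operatorname{div}i(x') \leq 0$ to produce an incoming edge of positive flow.

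Termination is precisely where the strict positivity $\min_e \xi(e) > 0$ enters. If the greedy procedure ever visited the same node twice, either internally or by hitting a node already traversed, the intervening portion would be a nontrivial loop $\w' \ss \{i>0\}$ with $L_\xi(\w') \geq \min_e \xi(e) > 0$. But Lemma \ref{lem:pontryagin} forces $\w'$ to be a geodesic between its coinciding endpoints, giving $L_\xi(\w') = d_\xi(\w'^-, \w'^+) = 0$, a contradiction. Hence both extensions terminate in at most $|N|$ steps, producing $\bar\w \ss \{i>0\}$ that contains $\w^*$ as a contiguous subpath and has the required endpoints.

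Applying Lemma \ref{lem:pontryagin} to $\bar\w$ itself yields
\[
L_\xi(\bar\w) = d_\xi(\bar\w^-, \bar\w^+) \leq \operatorname{diam}_\xi(\{\operatorname{div}i>0\}, \{\operatorname{div}i<0\}),
\]
and $L_\xi(\w^*) \leq L_\xi(\bar\w)$ by additivity of $L_\xi$ together with $\xi \geq 0$, which concludes the proof. The main obstacle in executing this plan is the termination of the greedy extension; all other steps are routine consequences of Lemma \ref{lem:pontryagin} and flow balance, but ruling out loops in the positivity set of $i$ is exactly the role of the hypothesis $\min_e \xi(e) > 0$.
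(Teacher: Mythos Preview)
Your proof is correct and follows essentially the same approach as the paper: pick a geodesic $\w \ss \{i>0\}$, rule out loops in $\{i>0\}$ via Lemma \ref{lem:pontryagin} combined with $\min_e\xi(e)>0$, extend $\w$ to a path $\bar\w \ss \{i>0\}$ with $\bar\w^-\in\{\div i>0\}$ and $\bar\w^+\in\{\div i<0\}$, and conclude by applying Lemma \ref{lem:pontryagin} to $\bar\w$. The only difference is that the paper states the extension step in one sentence without justification, whereas you spell out the greedy flow-balance argument and its termination explicitly; this is added detail rather than a different idea.
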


\begin{proof}
    Let $\w \ss \{i>0\}$, by Lemma \ref{lem:pontryagin} we already know that $\w$ must be necessarily a geodesic, our goal is to show that
    \[
    L_\xi(\w) \leq \operatorname{diam}_\xi(\{\div i>0\},\{\div i<0\}).
    \]
    
    Due to the constitutive relation and the lower bound on $\xi$, $\w$ can not be a loop. Otherwise we get the contradiction
    \[
    0 = u(\w^+) - u(\w^-) = \sum_{e\in \w} Du(e) = \sum_{e\in \w} \xi(e) > 0.
    \]
    
    Now let us extend $\w$ to some path $\bar\w \supseteq \w$ such that $\bar \w\ss \{i>0\}$, $\bar\w^-\in \{\div i >0\}$ and $\bar\w^+\in \{\div i<0\}$. Then
    \[
    L_\xi(\w) \leq L_\xi(\bar\w) = d_\xi(\bar\w^-,\bar\w^+) \leq \operatorname{diam}_\xi(\{\div i>0\},\{\div i<0\}),
    \]
    which concludes the proof.
\end{proof}

\begin{remark}\label{rmk:1}
    If we let $m:=\min_{e\in E}\xi(e)>0$ and $M := \max_{e\in E}\xi(e)$, then we get that under the same assumptions as in the previous result
    \[
    \operatorname{in-diam}_{\mathbbm 1} (\{i>0\}) \leq Mm^{-1}\operatorname{diam}_{\mathbbm 1}(\{\div i>0\},\{\div i<0\}).
    \]
    Indeed
    \begin{align*}
    m\operatorname{in-diam}_{\mathbbm 1} (\{i>0\}) &\leq \operatorname{in-diam}_\xi (\{i>0\})\\
    &\leq \operatorname{diam}_\xi(\{\div i>0\},\{\div i<0\})\\
    &\leq M\operatorname{diam}_{\mathbbm 1}(\{\div i>0\},\{\div i<0\}).
    \end{align*}
\end{remark}

When we apply these results to the constitutive relation, we recover a bound on the support of the edge flow.

\begin{theorem}\label{thm:fin_prop}
    Let $H\in C^1(\R^E_{\geq0})$ be such that $\nabla H\geq m\mathbbm 1>0$. Then there exists a constant $C>0$ such that for any $\mu,\nu\in \mathcal P(N)$ with $\{\mu>0\}\cap\{\nu>0\} = \emptyset$, and any solution $(i,u)\in \R^E_{\geq0}\times\R^N$ of
    \begin{align}\label{eq:8}
    \begin{cases}
        \min\{i,\nabla H(i)-Du\}=0,\\
        \div i = \mu-\nu
    \end{cases}
    \end{align}
    it holds that
    \[
    \operatorname{in-diam}_{\mathbbm 1} (\{i>0\}) \leq C\operatorname{diam}_{\mathbbm 1}(\{\mu>0\},\{\nu>0\}).
    \]
\end{theorem}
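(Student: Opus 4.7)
The strategy is to apply Corollary~\ref{cor:4} together with Remark~\ref{rmk:1} to the metric $\xi := \nabla H(i)$. Since $\{\mu>0\}\cap\{\nu>0\}=\emptyset$ one has $(\operatorname{div} i)^+=\mu$ and $(\operatorname{div} i)^-=\nu$, so Remark~\ref{rmk:1} (with the lower bound $\nabla H \geq m\mathbbm 1$) immediately yields
\[
\operatorname{in-diam}_{\mathbbm 1}(\{i>0\}) \leq \frac{\|\nabla H(i)\|_\infty}{m}\operatorname{diam}_{\mathbbm 1}(\{\mu>0\},\{\nu>0\}).
\]
It thus remains to produce a bound on $\|\nabla H(i)\|_\infty$ independent of $(\mu,\nu,i,u)$. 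By continuity of $\nabla H$ on compact subsets of $\R^E_{\geq 0}$, this reduces to a uniform $L^\infty$ bound on $i$; I shall argue that $\|i\|_\infty \leq 1$.

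The key structural step is that the edge set $\{i>0\}$ is acyclic. Suppose toward contradiction that a loop $\omega \subseteq \{i>0\}$ of length $\ell \geq 1$ exists. Then $\omega^+ = \omega^-$ and Lemma~\ref{lem:pontryagin}, applied to $(i,\xi,u)$ with $\xi = \nabla H(i)$, forces
\[
L_\xi(\omega) = u(\omega^+) - u(\omega^-) = 0,
\]
which contradicts $L_\xi(\omega) = \sum_{e\in\omega} \xi(e) \geq m\ell > 0$.

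Granted acyclicity, I would appeal to Lemma~\ref{lem:smirnov} (Smirnov), applicable since $\mu,\nu\in\mathcal P(N)$. It yields $q\in\mathcal Q(\Pi(\mu,\nu))$ with $i[q]\leq i$ whose support lies inside the acyclic set $\{i>0\}$, so every path in $\{q>0\}$ is necessarily simple. The residual flow $j:=i-i[q]\geq 0$ is divergence-free and vanishes outside $\{i>0\}$; a topological sort of the DAG $\{i>0\}$ processed from sources then forces $j\equiv 0$, so $i=i[q]$. Since $q$ is supported on simple paths, $i(e) = \sum_{\omega\ni e} q(\omega) \leq \sum_{\omega} q(\omega) = 1$. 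Setting $M := \max\{\nabla H_e(j) : j\in[0,1]^E,\, e\in E\}$, which is finite by continuity on a compact set, yields the conclusion with $C := M/m$.

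The principal obstacle is the acyclicity step: a priori, a solution $(i,u)$ of \eqref{eq:8} could exhibit cycles of flow along which $i$ would be unbounded, even for fixed $\mu,\nu$. It is precisely the strict positivity $\nabla H \geq m\mathbbm 1 > 0$ combined with the Pontryagin-type identity of Lemma~\ref{lem:pontryagin} that rules out such cycles, reducing the problem to a simple-path transport and delivering the uniform $L^\infty$ bound needed for Remark~\ref{rmk:1} to close the estimate.
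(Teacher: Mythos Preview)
Your proposal is correct and follows essentially the same approach as the paper: establish acyclicity of $\{i>0\}$ via Lemma~\ref{lem:pontryagin} and the strict positivity $\nabla H\geq m\mathbbm 1$, apply the Smirnov decomposition, conclude that $i=i[q]$ for some $q$ supported on simple paths, and hence obtain the uniform bound $\|i\|_\infty\leq 1$ needed to invoke Remark~\ref{rmk:1}. The only cosmetic difference is that the paper argues ``if $j=i-i[q]$ is nontrivial then $\{j>0\}$ contains a loop'' directly, whereas you first record acyclicity and then use a topological-sort argument to force $j\equiv 0$; these are equivalent.
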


\begin{proof}
    Let
    \[
    \mathcal S := \{i\in \R^E_{\geq0} \ | \ \text{For some $u\in\R^N$, the pair $(i,u)$ satisfies \eqref{eq:8}}\}.
    \]
    Our goal is to show that $\mathcal S$ is contained in a compact set, independent of $\mu$ and $\nu$. Given that $\mathcal S$ is clearly closed, we would get that
    \[
    M:= \sup_{\substack{e\in E\\i\in \mathcal S}} \p_e H(i) <\8.
    \]
    This will allow us to apply the Remark \ref{rmk:1} and set $C=Mm^{-1}$ to conclude.
    
    Let $i\in \mathcal S$. By the Smirnov decomposition (Lemma \ref{lem:smirnov}), there is some $q\in \Pi(\mu,\nu)$ such that $i[q]\leq i$. Let us show that $i[q]=i$ and $\{q>0\}\ss\operatorname{SPath}(G)$.

    Let $j = i-i[q]$ and notice that $\div j=0$. If $j$ is non trivial there must be a loop $\w \in \{j>0\}\ss \{i>0\}$. However, this contradicts Lemma \ref{lem:pontryagin}, and we necessarily have that $i[q]=i$. The same argument by contradiction with Lemma \ref{lem:pontryagin} shows that $\{q>0\}\ss\operatorname{SPath}(G)$. Hence, $\mathcal S$ is contained in the image by $i$ of the compact set of $[0,1]^{\operatorname{SPath}(G)}$.
\end{proof}

\begin{corollary}\label{cor:fin_prop}
    Let $H\in C^1(\R^E_{\geq0})$ be such that $\nabla H\geq m\mathbbm 1>0$. There exists a constant $C>0$, such that the following holds: Given $S^{\pm}\ss N$ non-trivial and such that $S^-\cap S^+ = \emptyset$, $\cM \ss \{f\in \R^N \ | \ \{f^+>0\}\ss S^-, \{f^->0\}\ss S^+\}$, then for every $i \in \operatorname{BP}(G,\cM,H)$ it holds that
    \[
    \operatorname{in-diam}_{\mathbbm 1} (\{i>0\}) \leq C\operatorname{diam}_{\mathbbm 1}(S^-,S^+).
    \]
\end{corollary}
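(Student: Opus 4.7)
The plan is to reduce this capacity-style Beckmann problem to the long-term setting of Theorem \ref{thm:fin_prop} and invoke that theorem with the same constant. Given $i\in\operatorname{BP}(G,\cM,H)$, set $f:=\operatorname{div}i\in \cM$. The hypothesis on $\cM$ immediately gives $\{f^+>0\}\ss S^-$ and $\{f^->0\}\ss S^+$, so $\mu:=f^+$ and $\nu:=f^-$ are supported in disjoint subsets of $S^-$ and $S^+$ respectively. Consistent with the capacity problem formulated in \eqref{eq:capacity}, where $\sum_{x\in S^-}f(x)=1$, one takes the implicit normalization $\mu,\nu\in\mathcal P(N)$.

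The key observation is that $i$ is automatically a minimizer of the long-term problem with divergence frozen at $f$. Indeed, any $j\in\R^E_{\geq 0}$ with $\operatorname{div}j=f$ trivially satisfies $\operatorname{div}j\in\cM$, so $j$ is a competitor in the original problem and minimality of $i$ forces $H(i)\leq H(j)$. Hence $i\in\operatorname{BP}(G,\{f\},H)$, and Lemma \ref{lem:4} in its long-term form (a single equality constraint $\operatorname{div}i=f$, no inequality constraints) produces $u\in\R^N$ such that $(i,u)$ solves the system \eqref{eq:8} with this $\mu$ and $\nu$.

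Theorem \ref{thm:fin_prop} then yields a constant $C>0$, depending only on $H$ and $m$, with
\[
\operatorname{in-diam}_{\mathbbm 1} (\{i>0\}) \leq C\operatorname{diam}_{\mathbbm 1}(\{\mu>0\},\{\nu>0\}) \leq C\operatorname{diam}_{\mathbbm 1}(S^-,S^+),
\]
where the second inequality uses the monotonicity of $\operatorname{diam}_{\mathbbm 1}$ in its two base sets together with $\{\mu>0\}\ss S^-$ and $\{\nu>0\}\ss S^+$. The only non-routine step is the feasibility reduction $i\in\operatorname{BP}(G,\{f\},H)$, which replaces the intricate feasible set prescribed by $\cM$ with the simple affine slice $\{j\geq 0\ |\ \operatorname{div}j=f\}$ and unlocks the long-term machinery; everything else amounts to a direct appeal to Theorem \ref{thm:fin_prop} and a set-theoretic comparison of diameters.
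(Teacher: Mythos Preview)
Your proof is correct and follows exactly the route the paper intends: the corollary is stated immediately after Theorem~\ref{thm:fin_prop} without proof precisely because one freezes $f=\operatorname{div}i\in\cM$, observes that $i\in\operatorname{BP}(G,\{f\},H)$ (since $\{f\}\ss\cM$ makes every competitor for the long-term problem a competitor for the original one), extracts the multiplier via Lemma~\ref{lem:4}, and invokes Theorem~\ref{thm:fin_prop}. Your identification of the feasibility reduction as the only non-routine step is accurate, and the final diameter comparison via $\{\mu>0\}\ss S^-$, $\{\nu>0\}\ss S^+$ is the intended closing.

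One small remark on the normalization: you correctly flag that $\mu,\nu\in\mathcal P(N)$ is needed to invoke Theorem~\ref{thm:fin_prop} (the compactness argument there goes through Smirnov, which requires probability marginals). The paper tacitly works under the standing convention $\cM\ss\{f\in\R^N\mid f^\pm\in\mathcal P(N)\}$ used throughout Section~\ref{sec:BP} (cf.\ Theorem~\ref{thm:equiv_opt}, Lemma~\ref{lem:pi}), so your appeal to the capacity normalization is in line with how the corollary is meant to be read.
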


\subsection{Existence of the multiplier}\label{sec:sym}

Let $(i,\xi) \in \R^E_{\geq0}\times\R^E_{\geq0}$. In this section we analyze whether there exists $u \in \R^N$ such that $\min\{i,\xi-Du\}=0$. The first thing we notice is that the actual values of $i$ are irrelevant, what it is important is instead the set $\{i>0\}$. The goal is then to characterize when there exist solutions of
\begin{align}\label{eq:9}
\begin{cases}
Du \leq \xi \text{ in } E,\\
Du = \xi \text{ in } \{i>0\}.
\end{cases}
\end{align}

Notice that if both edges $\pm e \in \{i>0\}$, then $\xi(e) = Du(e) = -Du(-e) = -\xi(-e)$, given that $\xi(\pm e)\geq0$ we necessarily have that $\xi(\pm e)=0$. So this gives the first necessary hypothesis:
\begin{align}
    \label{eq:hyp1}
    \pm e\in\{i>0\} \qquad \Rightarrow \qquad \xi(\pm e)=0.
\end{align}

\begin{definition}
    Given a directed graph $G=(N,E)$, a set $S \ss E$, and $\xi\colon E\to \R$, we say that $\xi$ is odd over $S$ if $\xi(-e)=-\xi(e)$ whenever both edges $\pm e\in S$.
\end{definition}

Note that $(i,\xi) \in \R^E_{\geq0}\times\R^E_{\geq0}$ satisfies \eqref{eq:hyp1} if and only if $\xi$ is odd over $\{i>0\}$.

We can also restate \eqref{eq:9} in terms of the first inequality in such display. To do so, we first introduce the symmetrizations of $G$ and $\xi$ over the set $\{i>0\}$.

\begin{definition}
    Given a directed graph $G=(N,E)$ and a set $S \ss E$, we define the symmetrization $\operatorname{Sym}_S(G) = (N,\operatorname{Sym}_S(E))$ such that
    \[
    \operatorname{Sym}_S(E) := E \cup \{-e \ | \ e\in S\}.
    \]
\end{definition}

\begin{definition}
    Given a directed graph $G=(N,E)$, a set $S \ss E$, and $\xi\colon E\to\R$ odd over $S$, we define the (odd) symmetrization $\operatorname{Sym}_S[\xi]\colon\operatorname{Sym}_S(E)\to\R$ such that
    \[
    \operatorname{Sym}_S[\xi](e) := \begin{cases}
        \xi(e) \text{ if } e \in E,\\
        -\xi(-e) \text{ if } e \in \operatorname{Sym}_S(E) \sm E.
    \end{cases}
    \]
\end{definition}

From the assumption that $\xi$ is odd over $S$ we get that $\operatorname{Sym}_S[\xi]$ is odd over $S\cup -S \ss \operatorname{Sym}_S(E)$.

Let $\xi \colon E\to [0,\8)$ odd over $\{i>0\}$. We observe that $u \in \R^N$ satisfies $\min\{i,\xi-Du\}=0$ if and only if it satisfies
\[
Du \leq \operatorname{Sym}_{\{i>0\}}[\xi] \text{ in } \operatorname{Sym}_{\{i>0\}}(E).
\]
Indeed, if $e\in \{i>0\}$ we have that $\operatorname{Sym}_{\{i>0\}}[\xi](-e) = -\xi(e)$ such that we get the desired equality from the inequalities
\[
\xi(e) = -\operatorname{Sym}_{\{i>0\}}[\xi](-e) \leq -Du(-e) = Du(e) \leq \operatorname{Sym}_{\{i>0\}}[\xi](e) = \xi(e).
\]

Let $G=(N,E)$ be a directed graph and $\xi\colon E\to\R$ unsigned. From now on we focus on the problem of finding $u\in\R^N$ such that $Du\leq \xi$. We define as before the length and distance for the unsigned metric $\xi \in \R^{E}$
\[
    L_{\xi}(\w) := \sum_{e\in\w}\xi(e), \qquad d_{\xi}(x,y) := \inf_{\substack{\w^-=x\\\w^+=y}} L_{\xi}(\w).
\]
By default $d_{\xi}(x,y)=+\8$ if $\operatorname{Path}_{xy}(G)=\emptyset$.

We can guarantee that the lengths between two given points is bounded from below if the lengths of the loops are non-negative.

\begin{definition}
    Given a directed graph $G=(N,E)$ we say that $\xi\colon E\to\R$ satisfies the non-negative loop condition if for every loop $\w$, $L_\xi(\w)\geq 0$.
\end{definition}

Under the non-negative loop condition we obtain that if there exists a geodesic between two given nodes, then there also must exists a geodesic which is a simple path. Given that the simple paths form a finite set, the infimum can then be replaced by a minimum. Also the triangle inequality can be easily checked from the definition of $d_\xi$, under the non-negative loop condition.

The following lemma is a discrete version of Poincaré's lemma.

\begin{lemma}\label{lem:7}
    The following are equivalent for $\xi \in \R^E$:
    \begin{enumerate}
        \item There exists $u\in\R^N$ such that $Du\leq \xi$.
        \item $\xi$ has the non-negative loop condition.
    \end{enumerate}
\end{lemma}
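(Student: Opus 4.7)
The plan is to prove the two implications separately. The direction $(1)\Rightarrow(2)$ is a direct telescoping argument: if $u \in \R^N$ satisfies $Du \leq \xi$, then for any loop $\w$ with $\w^+ = \w^-$,
\[
L_\xi(\w) = \sum_{e\in\w}\xi(e) \geq \sum_{e\in\w} Du(e) = u(\w^+)-u(\w^-) = 0.
\]

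For $(2)\Rightarrow(1)$, the natural candidate for $u$ is a shortest-path distance, mimicking the standard fact that $x\mapsto d(x_0,x)$ is $1$-Lipschitz with respect to the metric $d$. Under the non-negative loop condition, any path from $x$ to $y$ reduces to a simple path of length no greater by excising its loops, each of non-negative length. Since the set of simple paths between two given nodes is finite, this shows $d_\xi(x,y) \in \R\cup\{+\8\}$ is well defined; concatenation of paths then yields the triangle inequality $d_\xi(z,y) \leq d_\xi(z,x) + d_\xi(x,y)$.

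The main obstacle is that $G$ is directed, so a single base node need not reach every other node and $d_\xi(x_0,\cdot)$ may take the value $+\8$ at some nodes. I would get around this by defining
\[
u(y) := \min_{x\in N} d_\xi(x,y),
\]
using the convention $d_\xi(y,y)=0$ coming from the trivial path. This ensures the minimum is attained by some term that is at most $0$, while the non-negative loop condition rules out $-\8$ (each finite $d_\xi(x,y)$ is realized by a simple path, of which there are only finitely many). For any edge $e=(x,y)\in E$, since $d_\xi(x,y)\leq \xi(e)$, the triangle inequality gives $d_\xi(z,y) \leq d_\xi(z,x) + \xi(e)$ for every $z\in N$; taking the minimum over $z$ yields $u(y)-u(x) \leq \xi(e)$, which is exactly $Du(e)\leq \xi(e)$, completing the construction.
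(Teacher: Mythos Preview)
Your proof is correct and follows essentially the same approach as the paper: the telescoping argument for $(1)\Rightarrow(2)$ is identical, and for $(2)\Rightarrow(1)$ both you and the paper define $u(y) = \min_{x\in N} d_\xi(x,y)$ and use the triangle inequality together with $d_\xi(x,y)\leq \xi(e)$ for $e=(x,y)$. You are slightly more explicit than the paper in justifying that $u$ is real-valued (via reduction to simple paths under the non-negative loop condition), which is a welcome clarification.
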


\begin{proof}
    The implication $(1)\Rightarrow(2)$ follows from the telescopic identity. Given a loop $\w \in \operatorname{Path}(G)$, we have that
    \[
    \sum_{e\in \w} \xi(e) \geq \sum_{e\in \w} Du(e) = u(\w^+)-u(\w^-) = 0.
    \]

    Assume now that $(2)$ holds. Let $u\in \R^N$ such that
    \[
    u(x) := \min_{x_0\in N} d_{\xi}(x_0,x).
    \]
    Given $e= (x,y) \in E$ and $x_0\in N$ such that $u(x)= d_{\xi}(x_0,x)$ we obtain that
    \[
    Du(e) = u(y)-u(x) \leq d_{\xi}(x_0,y) - d_{\xi}(x_0,x) \leq d_{\xi}(x,y) \leq \xi(e).
    \]
    This settles the desired identity for $Du$.
\end{proof}

\begin{lemma}[1-Lipschitz type estimate] \label{lem:222}
    Given $(\xi,u) \in \R^{E}\times\R^N$ such that $Du\leq \xi$, we obtain the following upper bound on the differences of $u$ for any $x,y\in N$
    \[
    -d_\xi(y,x) \leq u(y)-u(x) \leq d_{\xi}(x,y).
    \]
\end{lemma}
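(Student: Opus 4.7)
The plan is to derive both inequalities by a simple telescoping argument along paths, identical in spirit to the telescopic identity already used in the proof of Lemma \ref{lem:pontryagin} and Lemma \ref{lem:7}.

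For the upper bound, I would fix $x,y\in N$ and take any path $\w=(e_1,\dots,e_\ell)\in \operatorname{Path}_{xy}(G)$. Writing the increment $u(y)-u(x)$ as a telescoping sum along $\w$,
\[
u(y)-u(x) = \sum_{i=1}^\ell Du(e_i) \leq \sum_{i=1}^\ell \xi(e_i) = L_\xi(\w),
\]
where I use $Du\leq \xi$ on every edge. Taking the infimum over $\w\in\operatorname{Path}_{xy}(G)$ gives $u(y)-u(x)\leq d_\xi(x,y)$. In the degenerate case $\operatorname{Path}_{xy}(G)=\emptyset$ the estimate is vacuous since $d_\xi(x,y)=+\infty$ by convention.

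For the lower bound I would simply swap the roles of $x$ and $y$. Applying the previous step gives $u(x)-u(y)\leq d_\xi(y,x)$, which is exactly $u(y)-u(x)\geq -d_\xi(y,x)$.

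There is no real obstacle here: the argument uses only the telescoping property of $D$ on paths and the pointwise inequality $Du\leq \xi$. I note that the non-negative loop condition is not needed for this lemma (it is used only to ensure $d_\xi$ is finite and realised by simple paths in Lemma \ref{lem:7}); even without that condition, one may have $d_\xi(x,y)=-\infty$, in which case no $u$ with $Du\leq \xi$ can exist and the statement is vacuous in that situation.
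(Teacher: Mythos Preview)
Your proof is correct and follows essentially the same approach as the paper: telescope $u(y)-u(x)$ along an arbitrary path from $x$ to $y$, use $Du\leq\xi$ edgewise, take the infimum over paths, and then swap the roles of $x$ and $y$ for the other inequality. The only differences are cosmetic (you explicitly mention the vacuous case $\operatorname{Path}_{xy}(G)=\emptyset$ and add a remark about the non-negative loop condition, neither of which appears in the paper's shorter write-up).
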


\begin{proof}
    If $\w\in\operatorname{Path}_{xy}(G)$, then
\[
u(y)-u(x) = \sum_{e\in \w} Du(e) \leq \sum_{e\in \w} \xi(e) = L_{\xi}(\w).
\]
The right inequality now follows by taking the minimum over $\w\in \operatorname{Path}_{xy}(G)$. The left inequality follows by the same argument changing the roles of $x$ and $y$.
\end{proof}

In general we can not deduce that $|u(y)-u(x)| \leq d_{\xi}(x,y)$, because it is not necessarily always true that $d_\xi(x,y)=d_\xi(y,x)$.


As a consequence of Lemma \ref{lem:7} we obtain the following characterization for the existence of $u\in \R^N$ satisfying $\min\{i,\xi-Du\}=0$.

\begin{corollary}\label{cor:ext}
    The following are equivalent for $(i,\xi) \in \R^E_{\geq0}\times\R^E_{\geq0}$:
    \begin{enumerate}
        \item There exists $u\in \R^N$ such that $\min\{i,\xi-Du\}=0$.
        \item The metric $\xi$ is odd over $\{i>0\}$ and $\operatorname{Sym}_{\{i>0\}}[\xi]$ has the non-negative loop condition on the symmetrized graph $\operatorname{Sym}_{\{i>0\}}(G)$.
    \end{enumerate}
\end{corollary}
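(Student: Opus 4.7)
The plan is to combine the two reductions that are already carried out in the text preceding the corollary with Lemma \ref{lem:7} applied to the symmetrized graph. I would first record what the excerpt has essentially done already: (i) the necessity of oddness was observed from $\xi(\pm e) = -\xi(\mp e) \geq 0$ whenever $\pm e \in \{i>0\}$, and (ii) once $\xi$ is odd over $\{i>0\}$, the condition $\min\{i,\xi - Du\} = 0$ was shown to be equivalent to the single inequality $Du \leq \operatorname{Sym}_{\{i>0\}}[\xi]$ on the symmetrized graph $\operatorname{Sym}_{\{i>0\}}(G)$. With these two facts in hand, the corollary is essentially just Lemma \ref{lem:7} transported onto $\operatorname{Sym}_{\{i>0\}}(G)$.

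For the implication $(1)\Rightarrow(2)$, I would argue as follows. Assume there exists $u$ with $\min\{i,\xi-Du\}=0$. If $\pm e \in \{i>0\}$, then $\xi(e) = Du(e)$ and $\xi(-e) = Du(-e) = -Du(e) = -\xi(e)$; combined with $\xi \geq 0$ this forces $\xi(\pm e) = 0$, establishing the oddness of $\xi$ on $\{i>0\}$. Now the equivalence already observed in the excerpt promotes the constitutive relation into the single inequality $Du \leq \operatorname{Sym}_{\{i>0\}}[\xi]$ on $\operatorname{Sym}_{\{i>0\}}(E)$. Applying the $(1)\Rightarrow(2)$ direction of Lemma \ref{lem:7} on the graph $\operatorname{Sym}_{\{i>0\}}(G)$ gives the non-negative loop condition for $\operatorname{Sym}_{\{i>0\}}[\xi]$.

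For the converse $(2)\Rightarrow(1)$, I would start from the oddness of $\xi$ over $\{i>0\}$, which ensures that $\operatorname{Sym}_{\{i>0\}}[\xi]$ is well defined on $\operatorname{Sym}_{\{i>0\}}(G)$. The non-negative loop condition on this symmetrized graph, together with the $(2)\Rightarrow(1)$ direction of Lemma \ref{lem:7} (applied on $\operatorname{Sym}_{\{i>0\}}(G)$), produces a $u \in \R^N$ satisfying $Du \leq \operatorname{Sym}_{\{i>0\}}[\xi]$ on $\operatorname{Sym}_{\{i>0\}}(E)$. The equivalence noted in the excerpt then translates this back into $\min\{i,\xi-Du\}=0$.

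There is no real obstacle: all the substantive work is packaged in Lemma \ref{lem:7} and in the preceding reformulation of the constitutive relation in terms of the symmetrized metric. The only minor point to be careful about is that Lemma \ref{lem:7} is stated for a general directed graph, so one has to note explicitly that it applies verbatim to $\operatorname{Sym}_{\{i>0\}}(G)$, which is itself a directed graph on the same node set $N$.
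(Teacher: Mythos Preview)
Your proposal is correct and follows exactly the approach the paper intends: the corollary is stated without proof as ``a consequence of Lemma \ref{lem:7}'', and the preceding text already records both the necessity of oddness and the equivalence of $\min\{i,\xi-Du\}=0$ with $Du\leq \operatorname{Sym}_{\{i>0\}}[\xi]$ on the symmetrized graph. You have simply spelled out the two-line deduction the paper leaves implicit.
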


To conclude, we state a final characterization that brings us back to the Beckmann problem. Notice that the second formulation is independent of the multiplier.

\begin{definition}
    Given a directed graph $G=(N,E)$ and $H \in C^1(E)$, we say that $i \in \R^{E}_{\geq0}$ is a solution of the constitutive relation for the potential $H$ if any of the following two equivalent conditions holds:
    \begin{enumerate}
        \item There exists $u\in \R^N$ such that $\min\{i,\nabla H(i)-Du\}=0$.
        \item The metric $\xi=\nabla H(i)$ is odd over $\{i>0\}$ and $\operatorname{Sym}_{\{i>0\}}[\xi]$ has the non-negative loop condition on the symmetrized graph $\operatorname{Sym}_{\{i>0\}}(G)$.
    \end{enumerate}
\end{definition}

\subsection{Discrete elliptic equations}\label{sec:ellip_eqns}

When $H$ is convex, the constitutive relation can also be understood in terms of the \textit{Legendre transform of $H$} denoted by $H^*\colon \R^E\to (-\8,\8]$ and given by:
\[
H^*(\xi) := \sup_{i\in \R^E_{\geq0}} (\xi\cdot i - H(i)).
\]
One may also define $H$ in $\R^E$ by setting $H=+\infty$ outside $\R^E_{\geq0}$ and use the standard definition of the Legendre transform (taking the supremum over $\R^E$). In general, given $H\colon \R^E\to (-\8,\8]$ we denote $\operatorname{Dom}(H) := \{H<\8\}$.

Young's inequality states that for every $\xi\in \R^E$ and $i\in \R^E_{\geq0}$
\[
H^*(\xi) + H(i) \geq \xi\cdot i.
\]
Equality holds if and only if $\xi \in \p H(i)$. The set $\p H(i)$ is the \textit{sub-differential of $H$ at $i$} and is defined as
\[
\p H(i) := \{\xi \in \R^E \ | \ H(j) \geq H(i) + \xi\cdot(j-i) \text{ for all } j\in \R^E_{\geq0}\}.
\]

The equality case in Young's inequality also implies that $i \in \p H^*(\xi)$, and is an equivalence when $H$ is convex.

\begin{lemma}\label{lem:cons_rel}
    Let $H\in C^1(\R^E_{\geq0})$ be convex and $i\in \R^E_{\geq0}$. Then 
    \begin{align}\label{eq:5}
    \min\{i,\nabla H(i)-\xi\} = 0
    \end{align}
    is equivalent to $i \in \p H^*(\xi)$. Moreover, if $H$ is strictly convex we have that $H^* \in C^1(\operatorname{Dom}H^*)$ and \eqref{eq:5} is equivalent to
    \[
    i = \nabla H^*(\xi).
    \]
\end{lemma}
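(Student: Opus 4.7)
The plan is to route the equivalence through Young's inequality. Viewing $H$ as a proper lsc convex function on $\R^E$ by extending it by $+\8$ outside $\R^E_{\geq 0}$, one has $i\in\p H^*(\xi)$ if and only if $\xi\in\p H(i)$, both being equivalent to the equality case $H(i)+H^*(\xi)=\xi\cdot i$ of Young's inequality. The heart of the proof therefore reduces to identifying the constitutive relation $\min\{i,\nabla H(i)-\xi\}=0$ with the condition $\xi\in\p H(i)$.

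For the direction ``constitutive relation implies $\xi\in\p H(i)$'', I would set $\l:=\nabla H(i)-\xi\geq 0$ and use complementary slackness to write $\l\cdot i=0$. For any $j\in \R^E_{\geq 0}$, the $C^1$ convexity of $H$ gives
\[
H(j)\geq H(i)+\nabla H(i)\cdot(j-i)=H(i)+\xi\cdot(j-i)+\l\cdot j,
\]
where $\l\cdot j\geq 0$ since $\l,j\geq 0$; hence $\xi\in\p H(i)$. For the reverse, I would test the subgradient inequality along admissible variations at $i$: with $j=i+t\mathbbm 1_e\in \R^E_{\geq 0}$ for $t>0$, dividing by $t$ and sending $t\to 0^+$ yields $(\nabla H(i)-\xi)(e)\geq 0$ for every edge $e$. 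When $i(e)>0$, the perturbation $j=i-t\mathbbm 1_e$ is admissible for small $t>0$ and produces the opposite inequality, so $(\nabla H(i)-\xi)(e)=0$ whenever $i(e)>0$. Together these give the constitutive relation.

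For the moreover clause, strict convexity of $H$ forces $\p H$ to be injective as a set-valued map: if $\xi\in \p H(i_1)\cap\p H(i_2)$, adding the two defining inequalities shows that $H$ is affine on $[i_1,i_2]$, which contradicts strict convexity unless $i_1=i_2$. Via the Fenchel duality $\xi\in \p H(i) \iff i\in \p H^*(\xi)$, this makes $\p H^*$ single-valued on $\operatorname{Dom}(H^*)$; a convex function whose subdifferential is single-valued on its domain is differentiable there, so $H^*\in C^1(\operatorname{Dom}(H^*))$ with $\p H^*(\xi)=\{\nabla H^*(\xi)\}$, and the condition $i\in\p H^*(\xi)$ collapses to $i=\nabla H^*(\xi)$. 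The main subtlety I anticipate is the differentiability at boundary points of $\operatorname{Dom}(H^*)$; this should be handled by standard results on the smoothness of Legendre transforms of strictly convex $C^1$ functions on a closed convex set.
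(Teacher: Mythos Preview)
Your proposal is correct and follows essentially the same route as the paper: both reduce the equivalence to showing that \eqref{eq:5} is equivalent to $\xi\in\p H(i)$, then invoke the convex-duality identification $\xi\in\p H(i)\Leftrightarrow i\in\p H^*(\xi)$; both directions are handled by the same perturbation arguments (writing $\l=\nabla H(i)-\xi$ with $\min\{i,\l\}=0$ for one direction, and testing $j=i\pm t\mathbbm 1_e$ for the other). For the moreover clause the paper simply cites the $C^1$-regularity of $H^*$ as a well-known property of the Legendre transform, whereas you sketch the argument via injectivity of $\p H$; your version is more informative, and the boundary subtlety you flag is exactly what the paper sweeps under the ``well known'' rug.
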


\begin{proof}
    It suffices to show that \eqref{eq:5} is equivalent to $\xi \in \p H(i)$, which in turn is equivalent to $i \in \p H^*(\xi)$ since $H$ is convex. The equivalence between the $C^1$-regularity of $H^*$ and the strict convexity of $H$ is a well known property of the Legendre transform. 
    
    By convexity, $\nabla H(i) \in \p H(i)$. If $\xi$ satisfies \eqref{eq:5}, then $\xi = \nabla H(i)-\lambda$ for some $\lambda\geq 0$, where $\lambda(e)>0$ only if $i(e)=0$. This gives $\lambda\cdot (j-i)\geq0$ for all $j\in \R^E_{\geq0}$. Hence
    \[
    H(j) - H(i) - \xi\cdot (j-i) = H(j) - H(i) - \nabla H(i)\cdot (j-i) + \l\cdot(j-i) \geq 0,
    \]
    and thus $\xi \in \p H(i)$.

    Now assume that $\xi \in\p H(i)$. Given $e_0\in E$, let $j = i + \e\mathbbm 1_{e_0}$ for some $\e>0$. Then
    \[
    \e\xi(e_0)= (j(e_0)-i(e_0))\xi(e_0) = \xi\cdot(j-i) \leq H(j)-H(i) = \e\p_{e_0} H(i) + o(\e).
    \]
    Dividing by $\e$ and taking the limit $\e\to0^+$, we find $\xi(e_0) \leq \p_{e_0} H(i)$. Since $e_0$ was arbitrary, we conclude $\xi\leq \nabla H(i)$.

    If $i(e_0)>0$, we can still choose $j = i - \e\mathbbm 1_{e_0} \in \R^E_{\geq0}$ for $\e>0$ small enough. The same argument shows $\xi(e_0) \geq \p_{e_0} H(i)$, which forces the equality. Thus, $\xi$ satisfies \eqref{eq:5} for the given $i$.
\end{proof}

We already pointed out the existence of solutions of the Beckmann problem in the Theorem \ref{thm:equiv_opt}. The following corollary addresses the uniqueness of solutions and characterizes them in the case that $H$ is strictly convex.

\begin{corollary}
    Let $k\in\{1,2\}$, $m_k\in \N$, $A_k\colon \R^N\to \R^{m_k}$ be linear, and $b_k\in\R^{m_k}$. Assume that
    \begin{align*}
    \mathcal M := \{f \in \R^N \ | \ A_1[f] = b_1, A_2[f] \geq b_2\}\ss\{f \in \R^N \ | \ f^\pm \in \mathcal P(N)\}
    \end{align*}
    and $\{i\in \R^E_{\geq 0} \ | \ \operatorname{div}i \in \cM\}\neq \emptyset$. Let $H\in C^1(\R^E_{\geq0})$ be strictly convex. Then, the unique minimizer $i^* \in \operatorname{BP}(G,\cM,H)$ can be computed as $i^* = \nabla H^*(Du)$, where $u = A_1^T[u_1] - A_2^T[u_2] \in \R^N$ and $u_k \in \R^{m_k}$ satisfy the system
    \begin{align}
    \label{eq:ellip}
        \begin{cases}
        A_1[\operatorname{div}(\nabla H^*(Du))] = b_1,\\
        \min\{A_2[\operatorname{div}(\nabla H^*(Du))]-b_2,u_2\}=0.
        \end{cases}
    \end{align}
\end{corollary}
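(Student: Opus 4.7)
The plan is to combine three ingredients already proved in the paper: the existence result of Theorem \ref{thm:equiv_opt}, the KKT-type characterization in Lemma \ref{lem:4}, and the Legendre-transform reformulation of the constitutive relation in Lemma \ref{lem:cons_rel}.

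First I would observe that $\operatorname{BP}(G,\cM,H)\neq \emptyset$ by Theorem \ref{thm:equiv_opt}, and that the minimizer is unique. Indeed, the feasible set $\{i\in \R^E_{\geq 0} \ | \ \operatorname{div}i \in \cM\}$ is convex, being the intersection of the convex cone $\R^E_{\geq 0}$ with the preimage, under the linear map $\operatorname{div}$, of the affine equality/inequality constraint set $\cM$. A strictly convex function admits at most one minimizer over a convex set, which settles uniqueness of $i^*$.

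Next, I would apply Lemma \ref{lem:4} to the unique minimizer $i^*$ to obtain Lagrange multipliers $u_k^*\in \R^{m_k}$ such that, setting $u^* := A_1^T[u_1^*]-A_2^T[u_2^*]$, the constitutive relation $\min\{i^*,\nabla H(i^*)-Du^*\}=0$ holds jointly with $A_1[\operatorname{div} i^*]=b_1$ and $\min\{A_2[\operatorname{div} i^*]-b_2,u_2^*\}=0$. Since $H$ is strictly convex, Lemma \ref{lem:cons_rel} guarantees that $H^*\in C^1(\operatorname{Dom}(H^*))$ and allows me to rewrite the constitutive relation as the pointwise identity $i^* = \nabla H^*(Du^*)$.

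Finally, substituting $i^* = \nabla H^*(Du^*)$ into the two remaining conditions yields precisely the system \eqref{eq:ellip} for $(u_1^*,u_2^*)$. I do not anticipate a substantive obstacle: each piece has been established earlier and the argument amounts to stringing them together. The one subtlety worth flagging is that although $i^*$ is unique, the pair $(u_1^*,u_2^*)$ (and hence $u^*$) need not be; the content of the corollary is that the primal solution can be recovered as $\nabla H^*(Du^*)$ from any solution of \eqref{eq:ellip}, which follows from the strict convexity of $H$ via Lemma \ref{lem:cons_rel}.
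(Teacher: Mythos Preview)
Your proposal is correct and follows exactly the route the paper intends: the corollary is stated without an explicit proof, being introduced as a consequence of Theorem \ref{thm:equiv_opt} (existence), strict convexity (uniqueness), Lemma \ref{lem:4} (KKT conditions), and Lemma \ref{lem:cons_rel} (rewriting the constitutive relation as $i^*=\nabla H^*(Du^*)$). The only small addition worth making explicit is that the converse direction---recovering $i^*$ from \emph{any} solution $(u_1,u_2)$ of \eqref{eq:ellip}---uses not just Lemma \ref{lem:cons_rel} but also the ``moreover'' clause of Lemma \ref{lem:4}, which under convexity of $H$ guarantees that any $(i,u_1,u_2)$ satisfying \eqref{eq:main2} yields $i\in\operatorname{BP}(G,\cM,H)$.
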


It should be noted that, in general, there is no uniqueness for $u$. We will present specific examples in the next section. Neither we can guarantee the uniqueness of minimizers for $H\circ i$ over the feasible set $\mathcal Q(\Pi(\mu,\nu))$, even if $H$ is strictly convex. The mapping $q\in \mathcal P(\operatorname{Path}(G))\mapsto i[q]\in \R^E_{\geq0}$ may not be injective.

We say that the leading operator in \eqref{eq:ellip}, namely $\operatorname{div}(\nabla H^*(Du))$, is said to be elliptic because $H^*$ is convex. If for each $e\in E$, we get that $\partial_{e} H^*(Du)$ only depends on $Du(e)$, then we say that the operator is of local type. In general, however, we may have that $\operatorname{div}(\nabla H^*(Du))$ is non-local.

If $H^*$ satisfies that for some $0<\l\leq \L$, the functions $H^*(\xi)-\frac{\l}{2}|\xi|^2$ and $\frac{\L}{2}|\xi|^2-H^*(\xi)$ are convex, then we say that $\operatorname{div}(\nabla H^*(Du))$ is uniformly elliptic, otherwise we say that the operator is degenerate elliptic.

Notice that $H^*(\xi)-\frac{\l}{2}|\xi|^2$ is convex if and only if $H(i) - \frac{1}{2\l}|i|^2$ is concave. This is not the case because $H=+\8$ outside the positive quadrant. For instance, it is not possible to fit any paraboloid on top of $H$ over the boundary of $\R_{\geq0}^E$. Nevertheless, in the next section, we will see examples where the symmetry of the graph allows to reduce the problem to a uniformly elliptic equation.





\subsubsection{Local and strictly convex problems}

For each $e\in E$, consider the local cost $g_e\colon[0,\8)\to[0,\8)$, which is continuous, strictly increasing, and has inverse $h_e \colon[g_e(0),g_e(\8))\to [0,\8)$, where $g_e(\8):= \lim_{x\to\8}g_e(x)$. These costs arise from the strictly convex function
\[
H(i) = \sum_{e\in E}G_e(i(e)) \text{ where } G_e(x) = \int_0^xg_e(y)dy.
\]

Let us show that in this case $\partial_e H^*(\xi) = h_e^+(\xi_e)$ where $h_e^+\colon (-\8,g_e(\8)) \to [0,\8)$ is defined by
\[
h_e^+(x) =
\begin{cases}
    h_e(x) \text{ if } x \in [g_e(0),g_e(\8)),\\
    0 \text{ if } x< g_e(0).
\end{cases} 
\]
One can verify that the general case for this computation follows from the one-dimensional case. So, let us fix $e \in E$ and show that $\partial_e G_e^*(x) = h_e^+(x)$.

If $x\in [g_e(0),g_e(\8))$, we have $g_e(h_e^+(x)) = x$. From the convexity of $G_e$, it follows that for any $y\geq0$
\[
G_e(y) \geq G_e(h_e^+(x)) + g_e(h_e^+(x))(y-h_e^+(x)) = G_e(h_e^+(x)) + x(y-h_e^+(x)).
\]
Thus, in this case, we obtain $x \in \p G_e(h_e^+(x))$, which means that $\nabla G_e^*(x)=h_e^+(x)$.

If instead $x < g_e(0)$, we get that $h_e^+(x) = 0$. Once again using the convexity of $G_e$, we get that for any $y\geq 0$
\[
G_e(y) \geq G_e(0) + g_e(0)(y-0) \geq G_e(h_e^+(x)) + x(y-h_e^+(x)).
\]
From this, we conclude again that $\nabla G_e^*(x)=h_e^+(x)$.

\subsubsection{The p-Laplacian}

If $g_e(x) = x^{q-1}$ for $q>1$, we get a discrete analogue of the $p$-Laplacian, where $p=q/(q-1)$ is the conjugate of $q$,
\[
\operatorname{div}(\nabla H^*(Du))(x) = \Delta_pu(x) := \operatorname{div}(\max\{Du,0\}^{p-1}).
\]
In this case, if we further assume that $G$ is undirected ($E=-E$), we obtain
\[
\Delta_pu(x) = \sum_{y\sim x} |u(y)-u(x)|^{p-2}(u(y)-u(x)),
\]
where $y\sim x$ means that $(x,y)\in E$. When $p=q=2$ one obtains the classical discrete Laplacian
\[
\Delta u(x) := \Delta_2 u(x) = \sum_{y\sim x} (u(y)-u(x)).
\]

\subsubsection{The infinity-Laplacian and the 1-Laplacian}

An extreme case for the $p$-Laplacian occurs when $q=1$, so that $p=\8$. This case is obtained when $g = \mathbbm 1$ and $H(i) = \sum_{e\in E}i_e$. However, this potential is not strictly convex, so we are not able to give a straightforward constitutive relation of the form $i=\nabla H^*(Du)$ as before. In a more general setting we can also consider the constitutive relations that arise when $g = \xi$, a cost independent of the edge flow and perhaps with a different value over each edge.

Here we get the equation $\min\{i,\xi-Du\} = 0$, which was thoroughly discussed in the Sections \ref{sec:beckeq_to_wareq}, \ref{sec:bound_spt_EF}, and \ref{sec:sym}, and it is closely related with the stochastic geodesic problem for the metric $\xi$. If $u\in\R^N$ is such that $Du\leq \mathbbm 1$, the mapping
\[
u\mapsto \{\operatorname{div}i \in \R^N \ | \ \min\{i,\mathbbm 1 -Du\} = 0\},
\]
could be interpreted as a discrete version of the $\8$-Laplacian of $u$.

On the other extreme, we get that as $q\to\8$, then $p\to1^+$ and we limiting operator could be defined as the $1$-Laplacian
\[
\Delta_1 u := \operatorname{div}\1\mathbbm 1_{\{Du>0\}}\2.
\]

\subsubsection{Very degenerate operators}

From the modelling point of view, any local cost $g_e$ such that $g_e(0)=0$ is unrealistic, as there should be a positive cost to cross even an empty road. Following the $p$-Laplacian model discussed before, an alternative could be to consider for $\xi\in \R^E_{>0}$ and $q>1$ the cost $g_e(x) := \xi_e + x^{q-1}$. The corresponding operator is then
\[
\operatorname{div}(\nabla H^*(Du))(x) = \operatorname{div}(\max\{Du-\xi,0\}^{p-1}).
\]

If $G$ is undirected and $\xi$ is even ($\xi(-e)=\xi(e)$), we recover a very degenerate equation of the form
\[
\operatorname{div}(\nabla H^*(Du))(x) = \operatorname{div}(\mathbbm 1_{\{|Du|>\xi\}}|Du-\xi|^{p-2 }(Du-\xi)).
\]
The degeneracy of the equation is manifested in the fact that any function $u$ such that $|Du| \leq \xi$ is a solution of the homogeneous problem. This provides a further example on the lack of uniqueness.

\subsubsection{Non-local operators}

Assume that $G$ is an undirected graph. We now aim to model the congestion that occurs when traffic flows in both directions between two adjacent nodes.

Let $g_e = g_e(i_e,i_{-e}) \colon \R^2_{\geq0}\to [0,\8)$ the cost over the edge $e$, when the flow over such edge is $i_e$, and the flow on the opposite direction in $i_{-e}$. We will assume that $g_e(\R^2_{>0})\ss(0,\8)$, so there is a positive cost when both flows are positive.

The constitutive relations in this case are coupled in the following form. In the next identity we fix $e\in E$ and let $g = g_e$, $h=g_{-e}$, $i=i_e$ and $j=i_{-e}$. Also notice that $Du(-e)=-Du(e)$
\[
\begin{cases}
    \min\{i,g(i,j)-Du(e)\} = 0,\\
    \min\{j,h(j,i)+Du(e)\} = 0.
\end{cases}
\]
This implies that $i$ and $j$ can not be simultaneously positive. Otherwise we get the contradiction
\[
0 < g(i,j) = Du(e) = -h(j,i) < 0. 
\]

If $i>0$ (and hence $j=0$), we must have that $g(i,0) = Du(e)$. If $j>0$ (and $i=0$) we have instead that $Du(e) = -h(j,0)$. Finally, if $i=j=0$ we obtain $g(0,0) \geq Du(e) \geq-h(0,0)$.

\subsubsection{A quadratic and non-local potential}

Let us now examine a specific case from the previous example, still with $G$ undirected. For $\a,\gamma,\xi\in \R^E_{\geq0}$, with $\gamma$ even, the cost
\[
g_e(i_e,i_{-e}) := \a_\e i_e+\gamma_e i_{-e} + \xi_e,
\]
arises from the potential
\[
H(i) = \sum_{e\in E}\1\frac{\a_e}{2}i_e^2 + \frac{\gamma_e}{2}i_ei_{-e}+\xi_e i_e\2.
\]
Assuming that $\a>0$, we deduce from the previous general analysis, the constitutive relation
\[
i = \frac{1}{\a}\max\{Du-\xi,0\}.
\]
Surprisingly, this identity is local and independent of the parameter $\gamma$. For $\a=\mathbbm 1$ and $\xi=0$, the resulting operator is the discrete Laplacian.

We will find a truly non-local model in our discussion of dynamic examples in Section \ref{sec:dyn_ex}.

\section{Dynamic problems}\label{sec:dm}

We consider in this section the dynamic analogue of efficient equilibria. The idea is that the agents move on the graph at discrete units of time. Their configuration at a given time provides an edge flow that determines the metric over the graph at that time.

To be precise, we consider a finite time horizon given by a positive integer $T$. From the original graph $G=(N,E)$ we construct the time extended graph $G^T = (N^T,E^T)$ such that $N^T := N\times \{0,1,\ldots,T\}$. The edges are defined from $E$ using
\[
E^T := \{(e,t) := ((x,t-1),(y,t))\ | \ e=(x,y) \in E, t\in\{1,\ldots, T\}\}.
\]
Given $e=(x,y) \in E$ and $t\in\{1,\ldots, T\}$, we may also denote the edge $(e,t)$ by $(x,y,t)$. See Figure \ref{fig:ext_graph}.

\begin{figure}
    \centering
    \includegraphics[width=14cm]{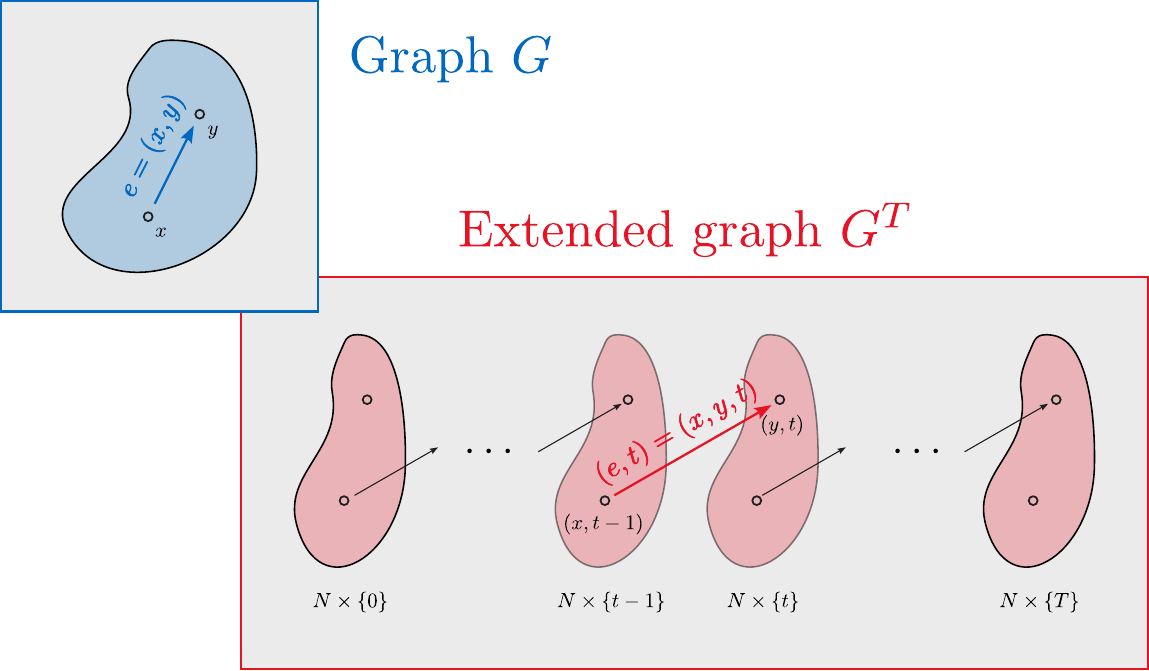}
    \caption{For the construction of the extended graph $G^T$ from the graph $G$ we consider $T+1$ copies of the nodes and join two nodes $(x,t-1)$ and $(y,t)$ if and only if $e=(x,y)$ is an edge of $G$.}
    \label{fig:ext_graph}
\end{figure}


Given a feasible set for the transport plan $\Gamma \ss \mathcal P(N\times N)$, we consider the extended plan
\begin{align*}
\Gamma^T := \left\{\gamma \in \mathcal{P}(N^T \times N^T) \ \middle| \right.
& \{\pi^-[\gamma]>0\} \subseteq N \times \{0\}, \\
& \{\pi^+[\gamma]>0\} \subseteq N \times \{1,\ldots,T\}, \\
&\left. \pi[\gamma] \in \Gamma \right\},
\end{align*}
where the marginals $\pi^\pm$ are defined as in \eqref{eq:marg} (but for the graph $G^T$), and $\pi\colon \mathcal P(N^T\times N^T) \to \mathcal P(N\times N)$ is given by
\[
\pi[\gamma](x,y) := \sum_{t,s=0}^T\gamma((x,t),(y,s)).
\]
The restriction $\{\pi^-[\gamma]>0\}\ss N\times\{0\}$ serves as an initial condition, indicating that all the mass required for transport is present at time $t=0$. On the other hand, $\{\pi^+[\gamma]>0\} \subseteq N \times \{1,\ldots,T\}$ means that the transport can be finished at any time between $1$ and $T$. Finally, the condition $\pi[\gamma] \in \Gamma$ fixes the transport that has to be completed in the given interval of time.

To formulate the dynamic Wardrop equilibrium problem as a Beckmann problem (Theorem \ref{thm:equiv_opt}), we require that $\Gamma^T = \Pi(G^T,\mathcal M)$ for some $\cM \ss \R^{N^T}$ (Definition \ref{def:pi_gm}). This is possible for instance, for the long-term problem or the the minimal-time mean field game (Remark \ref{rmk:MR3986796}).

In general, for $S \ss \mathcal P(N)\times\mathcal P(N)$ and $\Gamma = \bigcup_{(\mu,\nu)\in S} \Pi(\mu,\nu)$ we have from the definition of the extension that
\[
\Gamma^T = \bigcup_{f \in \cM^T(S)} \Pi(f^+,f^-),
\]
where
\begin{align}\label{eq:ext_gamma}
    \cM^T(S) := \left\{f \in \R^{N^T} \ \middle| \right.
    & f\geq 0 \text{ in } N \times \{0\},\\
    & f\leq 0 \text{ in } N \times \{1,\ldots,T\}, \notag\\
    &\left. (\pi[f^+],\pi[f^-]) \in S \right\},\notag
\end{align}
and $\pi\colon \R^{N^T}\to \R^N$ is given by
\[
\pi[f](x) := \sum_{t=0}^Tf(x,t).
\]
As a corollary from Lemma \ref{lem:pi} we obtain the following result:

\begin{corollary}\label{cor:ext_gamma}
    Given $S \ss \mathcal P(N)\times\mathcal P(N)$ and $\Gamma = \bigcup_{(\mu,\nu)\in S} \Pi(\mu,\nu)$, we have that
    \[
    \Gamma^T = \Pi(G^T,\mathcal M^T(S)).
    \]
\end{corollary}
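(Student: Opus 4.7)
The plan is a direct application of Lemma \ref{lem:pi}, adapted to the extended graph $G^T$ with the set $\cM^T(S) \ss \R^{N^T}$. The body of the excerpt already asserts the identity
\[
\Gamma^T = \bigcup_{f\in \cM^T(S)}\Pi(f^+,f^-),
\]
so all that remains is to verify the equality clause in Lemma \ref{lem:pi}, namely that $\cM^T(S)\ss \{f\in \R^{N^T} \ | \ f^\pm \in \mathcal P(N^T)\}$.

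First I would note that the defining conditions on any $f\in \cM^T(S)$ force $f^+$ to be supported in $N\times\{0\}$ and $f^-$ in $N\times\{1,\ldots,T\}$, since $f\geq 0$ on the former set and $f\leq 0$ on the latter. Under this disjoint-support structure, the projection $\pi\colon \R^{N^T}\to \R^N$ preserves total masses, so
\[
\sum_{N^T} f^+ = \sum_N \pi[f^+] = 1,
\qquad
\sum_{N^T} f^- = \sum_N \pi[f^-] = 1,
\]
where the last equalities use $(\pi[f^+],\pi[f^-])\in S\ss \mathcal P(N)\times \mathcal P(N)$. Combined with $f^\pm\geq 0$, this yields $f^\pm \in \mathcal P(N^T)$, which is exactly the hypothesis needed to invoke the equality case in Lemma \ref{lem:pi}.

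Then Lemma \ref{lem:pi} gives $\bigcup_{f\in \cM^T(S)}\Pi(f^+,f^-) = \Pi(G^T,\cM^T(S))$, and chaining with the body identity yields $\Gamma^T = \Pi(G^T,\cM^T(S))$. I do not foresee any serious obstacle: the verification reduces to bookkeeping on supports and total masses, while the real content is already encoded in Lemma \ref{lem:pi}. The only minor care needed is to keep straight the two $\pi$'s appearing in the excerpt — the one on $\mathcal P(N^T\times N^T)$ and the one on $\R^{N^T}$ — and to use the compatibility $\pi^\pm\circ\pi = \pi\circ\pi^\pm$ implicit in \eqref{eq:ext_gamma}.
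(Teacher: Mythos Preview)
Your proposal is correct and follows exactly the route the paper indicates: the corollary is stated immediately after the displayed identity $\Gamma^T = \bigcup_{f\in\cM^T(S)}\Pi(f^+,f^-)$ and is introduced as a direct consequence of Lemma~\ref{lem:pi}, with the only thing to check being $\cM^T(S)\ss\{f\in\R^{N^T}\mid f^\pm\in\mathcal P(N^T)\}$, which you verify cleanly via the support and mass bookkeeping.
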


Additionally, if $\cM \ss \{f\in \R^N \ | \ f^\pm\in\mathcal P(N)\}$, we obtain the follwing corollary.
    
\begin{corollary}
    For $\cM \ss \{f\in \R^N \ | \ f^\pm\in\mathcal P(N)\}$ and $\Gamma = \Pi(G,\cM)$, it holds that $\Gamma^T = \Pi(G^T,\cM^T)$ where
    \begin{align*}
    \cM^T := \left\{f \in \R^{N^T} \ \middle| \right.
    & f\geq 0 \text{ in } N \times \{0\}, \\
    & f\leq 0 \text{ in } N \times \{1,\ldots,T\}, \\
    &\left. \pi[f] \in \cM \right\}.
    \end{align*}
\end{corollary}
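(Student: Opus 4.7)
The plan is to verify the set equality $\Gamma^T = \Pi(G^T, \cM^T)$ by showing the two inclusions directly. In both directions the key object is, for $\gamma \in \mathcal P(N^T\times N^T)$, the function $f := \pi^-[\gamma] - \pi^+[\gamma] \in \R^{N^T}$, together with the identity
$\pi[f](x) = \sum_{t}(\pi^-[\gamma](x,t) - \pi^+[\gamma](x,t)) = \pi^-[\pi[\gamma]](x) - \pi^+[\pi[\gamma]](x)$,
obtained by interchanging the order of summation. This identity shows that $\pi[f] \in \cM$ if and only if $\pi[\gamma] \in \Pi(G,\cM) = \Gamma$, handling the last condition in each definition simultaneously.

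For the inclusion $\Gamma^T \ss \Pi(G^T, \cM^T)$, take $\gamma \in \Gamma^T$. The support conditions $\{\pi^-[\gamma]>0\} \ss N\times\{0\}$ and $\{\pi^+[\gamma]>0\} \ss N\times\{1,\ldots,T\}$ immediately give $f(x,0) = \pi^-[\gamma](x,0) \geq 0$ and $f(x,t) = -\pi^+[\gamma](x,t) \leq 0$ for $t\geq 1$. Combined with $\pi[\gamma] \in \Gamma$, this is exactly the definition of $f \in \cM^T$.

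For the reverse inclusion, take $\gamma \in \Pi(G^T, \cM^T)$. The identity above already yields $\pi[\gamma] \in \Gamma$. The delicate part is to promote the mere sign conditions on $f$ to the actual support conditions defining $\Gamma^T$. Setting $A(x) := f(x,0) \geq 0$ and $B(x) := -\sum_{t\geq 1} f(x,t) \geq 0$, one has $\pi[f] = A - B$ and the pointwise bound $\pi[f]^+ = (A-B)^+ \leq A$. The hypothesis $\cM \ss \{f \in \R^N : f^\pm \in \mathcal P(N)\}$ forces $\sum_x \pi[f]^+(x) = 1$, so I would chain
\[
1 = \sum_x \pi[f]^+(x) \leq \sum_x A(x) = \sum_x \pi^-[\gamma](x,0) - \sum_x \pi^+[\gamma](x,0) \leq \sum_x \pi^-[\gamma](x,0) \leq 1,
\]
where the final inequality uses $\pi^-[\gamma] \in \mathcal P(N^T)$. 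Every inequality must be an equality; from $\sum_x \pi^+[\gamma](x,0) = 0$ and $\sum_x \pi^-[\gamma](x,0) = 1$, the non-negativity and the probability-normalization of the marginals recover the support conditions defining $\Gamma^T$.

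The main obstacle is this last step: $\cM^T$ only records sign information on $f$, whereas $\Gamma^T$ demands the strictly stronger vanishing of $\pi^-[\gamma]$ on $N\times\{1,\ldots,T\}$ and of $\pi^+[\gamma]$ on $N\times\{0\}$. Closing this gap is impossible without the probability normalization baked into the hypothesis $\cM \ss \{f : f^\pm \in \mathcal P(N)\}$, which is what pins down every inequality in the chain above.
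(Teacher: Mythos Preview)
Your proof is correct. The paper does not write out a separate argument here: it obtains the statement as an immediate consequence of the preceding corollary (Corollary~\ref{cor:ext_gamma}, for $\Gamma=\bigcup_{(\mu,\nu)\in S}\Pi(\mu,\nu)$) together with Lemma~\ref{lem:pi}, which under the hypothesis $\cM\subseteq\{f:f^\pm\in\mathcal P(N)\}$ rewrites $\Pi(G,\cM)$ as such a union with $S=\{(g^+,g^-):g\in\cM\}$. Your direct verification is essentially the same machinery unpacked: the forward inclusion is routine, and your reverse inclusion---the chain $1=\sum\pi[f]^+\le\sum A\le\sum\pi^-[\gamma](\cdot,0)\le 1$ forcing all inequalities to collapse---is exactly the equality-forcing step from the proof of Lemma~\ref{lem:pi}, transplanted to $N^T$. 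So the approaches coincide in substance; yours is self-contained, the paper's is modular.
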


Assume that $S \ss \mathcal P(N)\times\mathcal P(N)$ can be presented in terms of affine equality and inequality constrains. Namely, for $k\in \{1,2\}$, $m_k \in \N$, $A_{k,\pm}\colon \R^N\to \R^{m_k}$ linear, and $b_k \in \R^{m_k}$
\[
S = \{(\mu,\nu) \in \R^N\times \R^N \ | \ A_{1,-}\mu+A_{1,+}\nu = b_1, A_{2,-}\mu+A_{2,+}\nu \geq b_2\}.
\]
Then we obtain that
\begin{align*}
\cM^T(S) := \left\{f \in \R^{N^T} \ \middle| \right.
& f\geq 0 \text{ in } N \times \{0\},\\
& f\leq 0 \text{ in } N \times \{1,\ldots,T\}, \notag\\
& A_{1,-}[\pi[\mathbbm 1_{N\times\{0\}}f]]-A_{1,+}[\pi[\mathbbm 1_{N\times\{1,\ldots,T\}}f]] = b_1, \notag\\
&\left. A_{2,-}[\pi[\mathbbm 1_{N\times\{0\}}f]]-A_{2,+}[\pi[\mathbbm 1_{N\times\{1,\ldots,T\}}f]] \geq b_2 \right\},\notag
\end{align*}
This leads to the following equations for $u\in \R^{N^T}$, $u_k \in \R^{m_k}$, and $u_\pm := A_{1,\pm}^T[u_1]-A_{2,\pm}^T[u_2] \in \R^N$
\begin{align}\label{eq:main_long}
\begin{cases}
    \min\{i,\nabla H(i) - Du\}=0 \text{ in } E^T,\\
    \min\{\operatorname{div} i,\pi^T[u_-]-u\}=0 \text{ in } N\times\{0\},\\
    \min\{-\operatorname{div} i,u+\pi^T[u_+]\}=0 \text{ in } N\times\{1,\ldots,T\},\\
    \min\{A_{2,-}[\pi[\mathbbm 1_{N\times\{0\}}\operatorname{div}i]]-A_{2,+}[\pi[\mathbbm 1_{N\times\{1,\ldots,T\}}\operatorname{div}i]]-b_2, u_2\} =0,\\
    A_{1,-}[\pi[\mathbbm 1_{N\times\{0\}}\operatorname{div}i]]-A_{1,+}[\pi[\mathbbm 1_{N\times\{1,\ldots,T\}}\operatorname{div}i]] = b_1.
\end{cases}
\end{align}
The linear operator $\pi^T\colon \R^N\to \R^{N^T}$ is the transpose of $\pi\colon \R^{N^T}\to \R^N$ which is given by $\pi^T[v](x,t) = v(x)$ for every $(x,t) \in N^T$.

The derivation of these equations follows as in the proof of the Lemma \ref{lem:4} from the computation of the Karush-Kuhn-Tucker conditions. We omit the details in this derivation, as an alternative approach in the Sections \ref{sec:aux_cons} and \ref{sec:cons_rel_dyn}, with the computations outlined in Theorem \ref{thm:main}. 


\subsection{Some dynamic equations}\label{sec:some_dyn_eq}

Let us present some examples to illustrate the previous computation. Additionally, we hope that the alternative approach introduced in the next section will further elucidate their meaning.

\subsubsection{The dynamic long-term problem}

Let $\mu,\nu\in \mathcal P(N)$. If $\G = \Pi(\mu,\nu)$, we get that $\G^T = \Pi(G^T,\mathcal M)$ for
\begin{align*}
\cM = \left\{f \in \R^{N^T} \ \middle| \right.
& f \geq 0 \text{ in } N \times \{0\},\\
& f\leq 0 \text{ in } N \times \{1,\ldots,T\}, \\
& \pi[\mathbbm 1_{N\times\{0\}}f] = \mu,\\
&\left. \pi[\mathbbm 1_{N\times\{1,\ldots,T\}}f] = -\nu \right\}.
\end{align*}
Hence, we obtain the following variational equations for $u\in \R^{N^T}$, $u_\pm\in \R^N$
\begin{align}\label{eq:long_term1}
\begin{cases}
    \min\{i,\nabla H(i) - Du\}=0 \text{ in } E^T,\\
    \min\{\operatorname{div} i,\pi^T[u_-]-u\}=0 \text{ in } N\times\{0\},\\
    \min\{-\operatorname{div} i,u+\pi^T[u_+]\}=0 \text{ in } N\times\{1,\ldots,T\},\\
    \operatorname{div} i = \mu \text{ in } N\times\{0\},\\
    \pi[\mathbbm 1_{N\times\{1,\ldots,T\}}\operatorname{div} i] = -\nu.
\end{cases}
\end{align}

\subsubsection{The minimal-time mean field game}

For the dynamic version of the problem presented in Remark \ref{rmk:MR3986796}, which is a discrete analogue of \cite{MR3986796}, we consider $\m\in \mathcal P(N)$, $S\ss N$, and $\Gamma = \bigcup_{\{\nu>0\}\ss S}\Pi(\mu,\nu)$. Then $\Gamma^T = \Pi(G^T,\mathcal M)$ for
\begin{align*}
\cM = \left\{f \in \R^{N^T} \ \middle| \right.
& f\geq 0 \text{ in } N \times \{0\},\\
& f \leq 0 \text{ in } N \times \{1,\ldots,T\}, \\
& \pi[\mathbbm 1_{N\times\{0\}}f] = \mu, \\
&\left. f = 0 \text{ in } (N\sm S) \times \{1,\ldots,T\} \right\}.
\end{align*}
We obtain the following equations for $u,u_+\in \R^{N^T}$, $u_-\in \R^N$
\begin{align}\label{eq:min_time_mfg}
\begin{cases}
    \min\{i,\nabla H(i) - Du\}=0 \text{ in } E^T,\\
    \min\{\operatorname{div} i,\pi^T[u_-]-u\}=0 \text{ in } S\times\{0\},\\
    \min\{-\operatorname{div} i,u+u_+\}=0 \text{ in } S\times\{1,\ldots,T\},\\
    \operatorname{div} i = 0 \text{ in } (N\sm S) \times \{1,\ldots,T\},\\
    \operatorname{div} i = \mu \text{ in } N\times\{0\}.
\end{cases}
\end{align}

\subsection{An auxiliary construction}\label{sec:aux_cons}

Both systems of equations \eqref{eq:long_term1} and \eqref{eq:min_time_mfg} in the previous examples can be simplified, though it may not be immediately obvious from the algebraic expressions. The following geometric construction offers the advantage of deducing and equivalent set of equations simpler than those given above.

Let $G^{T,\W}:= (N^{T,\W},E^{T,\W})$ such that $N^{T,\W} := N^T\cup N^\W$ and $E^{T,\W} = E^T\cup E^\W$. Here $\W$ is just a symbol, $N^\W = N\times\{\W\}$, and new edges are given by
\[
E^\W := \{(x,t,\W) := ((x,t),(x,\W)) \in N^T\times N^\W\ | \ x\in N, \, t\in\{1,\ldots,T\}\}.
\]
The idea is to use $N^\W$ as a final deposit at the end of the transport.

Let $\iota \colon \R^N\times \R^N \to \R^{N^{T,\W}}\times\R^{N^{T,\W}}$ be the inclusion given by
\[
\iota[\gamma]((x,t),(y,s)) := \begin{cases}
    \gamma(x,y) \text{ if $(t,s)=(0,\W)$},\\
    0 \text{ otherwise}.
\end{cases}
\]
Given $\Gamma \ss \mathcal P(N\times N)$, we define the extension
\[
\Gamma^{T,\W} := \{\iota[\gamma] \in \mathcal P(N^{T,\W}\times N^{T,\W}) \ | \ \gamma\in \Gamma\}.
\]

As an example let us compute $\Gamma^{T,\W}$ for $\Gamma = \bigcup_{(\mu,\nu)\in S}\Pi(\mu,\nu)$. In the following computations we define the inclusions $\iota_0,\iota_\W\colon \R^N\to\R^{N^{T,\W}}$ by
\[
    \iota_0[\mu](x,t) := \begin{cases}
    \m(x) \text{ if } t=0,\\
    0 \text{ otherwise},
\end{cases}
\qquad
    \iota_\W[\nu](y,s) := \begin{cases}
    \nu(y) \text{ if } s=\W,\\
    0 \text{ otherwise}.
\end{cases}
\]

\begin{lemma}
    Given $\gamma \in \mathcal P(N\times N)$,
    \[
    \pi^-[\iota[\gamma]] = \iota_0[\pi^-[\gamma]] \qquad\text{ and }\qquad \pi^+[\iota[\gamma]] = \iota_\W[\pi^+[\gamma]].
    \]
\end{lemma}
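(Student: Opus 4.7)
The proof is essentially a direct unpacking of definitions, so the plan is to compute each marginal from its definition and read off the claim. Since $\iota[\gamma]((x,t),(y,s))$ vanishes unless $(t,s)=(0,\Omega)$, almost every term in the marginal sums is zero, and the few surviving terms reassemble the marginal of $\gamma$ lifted through $\iota_0$ or $\iota_\Omega$.

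Concretely, for the first identity I would fix $(x,t)\in N^{T,\Omega}$ and expand
\[
\pi^-[\iota[\gamma]](x,t) \;=\; \sum_{(y,s)\in N^{T,\Omega}} \iota[\gamma]((x,t),(y,s)).
\]
By the case-definition of $\iota$, every summand is zero unless $t=0$ and $s=\Omega$. Thus if $t\neq 0$ the sum is zero, which matches $\iota_0[\pi^-[\gamma]](x,t)=0$. If $t=0$, only the terms with $s=\Omega$ survive and
\[
\pi^-[\iota[\gamma]](x,0) \;=\; \sum_{y\in N} \gamma(x,y) \;=\; \pi^-[\gamma](x) \;=\; \iota_0[\pi^-[\gamma]](x,0).
\]
This handles both cases simultaneously and gives the first identity.

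The second identity is entirely symmetric: I would fix $(y,s)\in N^{T,\Omega}$, expand $\pi^+[\iota[\gamma]](y,s)=\sum_{(x,t)}\iota[\gamma]((x,t),(y,s))$, and observe the sum vanishes unless $s=\Omega$, in which case only $t=0$ contributes and yields $\sum_{x\in N}\gamma(x,y)=\pi^+[\gamma](y)=\iota_\Omega[\pi^+[\gamma]](y,\Omega)$.

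There is no real obstacle here; the only thing to be careful about is bookkeeping of the two regimes (the case $t=0$ vs.\ $t\neq 0$, and analogously for $s$) so that the reader sees that both the vanishing and the nontrivial value are simultaneously encoded by $\iota_0$ and $\iota_\Omega$ respectively. One short display for each marginal should therefore suffice.
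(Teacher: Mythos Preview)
Your proof is correct and follows essentially the same approach as the paper: a direct case split on whether $t=0$ (respectively $s=\Omega$), using that $\iota[\gamma]$ vanishes outside $(t,s)=(0,\Omega)$ to reduce the surviving sum to the original marginal of $\gamma$. The paper's argument is identical in structure and detail.
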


\begin{proof}
    Let $\mu=\pi^-[\gamma]$. For $t\neq 0$ we have that
    \[
    \pi^-[\iota[\gamma]](x,t)=\sum_{(y,s)\in N^{T,\W}} \iota[\gamma]((x,t),(y,s)) = 0 =\iota_0[\mu](x,t).
    \]
    Meanwhile, for $t=0$,
    \[
    \pi^-[\iota[\gamma]](x,0) = \pi^-[\gamma](x) = \mu(x) = \iota_0[\mu](x,0).
    \]
    A similar computation shows that $\pi^+[\gamma] = \iota_\W[\mu]$, which concludes the proof.
\end{proof}

\begin{corollary}\label{cor:ext_gamma2}
    For $S \ss \mathcal P(N)\times \mathcal P(N)$ and $\Gamma = \bigcup_{(\mu,\nu)\in S} \Pi(\mu,\nu)$
    \[
    \Gamma^{T,\W} = \bigcup_{(\mu,\nu)\in S}\Pi(\iota_0[\mu],\iota_\W[\nu]) = \Pi(G^{T,\W},\cM^\W(S)),
    \]
    where
    \begin{align}\label{eq:ext_gamma_omega}
    \cM^\W(S) := \{f \in \R^{N^{T,\W}} \ | \ f = \iota_0[\mu]-\iota_\W[\nu] \text{ for some } (\mu,\nu)\in S\}.
    \end{align}
\end{corollary}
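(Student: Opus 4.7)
The plan is to prove the two equalities in turn, both essentially as direct consequences of the preceding lemma together with Lemma \ref{lem:pi}.

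For the first equality $\Gamma^{T,\W} = \bigcup_{(\mu,\nu)\in S}\Pi(\iota_0[\mu],\iota_\W[\nu])$, the forward inclusion is immediate: given $\gamma \in \Gamma$, we have $\gamma\in\Pi(\mu,\nu)$ for some $(\mu,\nu)\in S$, so $\pi^-[\gamma]=\mu$ and $\pi^+[\gamma]=\nu$; the preceding lemma then yields $\pi^-[\iota[\gamma]]=\iota_0[\mu]$ and $\pi^+[\iota[\gamma]]=\iota_\W[\nu]$, i.e.\ $\iota[\gamma]\in\Pi(\iota_0[\mu],\iota_\W[\nu])$. For the reverse inclusion, I would take $\tilde\gamma\in\Pi(\iota_0[\mu],\iota_\W[\nu])$ and first observe that the marginal hypotheses force the support of $\tilde\gamma$ to lie inside $(N\times\{0\})\times(N\times\{\W\})$; indeed, if $\tilde\gamma((x,t),(y,s))>0$ with $t\neq 0$ then $\pi^-[\tilde\gamma](x,t)>0$, contradicting $\pi^-[\tilde\gamma]=\iota_0[\mu]$, and similarly for $s\neq\W$. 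Hence defining $\gamma(x,y):=\tilde\gamma((x,0),(y,\W))$ gives $\tilde\gamma=\iota[\gamma]$, and the marginal identities from the preceding lemma show $\pi^-[\gamma]=\mu$, $\pi^+[\gamma]=\nu$, so $\gamma\in\Pi(\mu,\nu)\subseteq\Gamma$.

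For the second equality $\bigcup_{(\mu,\nu)\in S}\Pi(\iota_0[\mu],\iota_\W[\nu]) = \Pi(G^{T,\W},\cM^\W(S))$, I would apply Lemma \ref{lem:pi} to the extended graph $G^{T,\W}$ and the set $\cM^\W(S)$. The key point is that for any $(\mu,\nu)\in S$ with $\mu,\nu\in\mathcal P(N)$, the two terms in $f=\iota_0[\mu]-\iota_\W[\nu]$ have disjoint supports (one contained in $N\times\{0\}$, the other in $N\times\{\W\}$), so $f^+=\iota_0[\mu]$ and $f^-=\iota_\W[\nu]$, and both are probability distributions on $N^{T,\W}$. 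Thus $\cM^\W(S)\subseteq\{f\in\R^{N^{T,\W}}\ |\ f^\pm\in\mathcal P(N^{T,\W})\}$, which is exactly the hypothesis under which Lemma \ref{lem:pi} gives equality $\Pi(G^{T,\W},\cM^\W(S))=\bigcup_{f\in\cM^\W(S)}\Pi(f^+,f^-)$. Writing out the right-hand side via the parametrization of $\cM^\W(S)$ in \eqref{eq:ext_gamma_omega} produces exactly $\bigcup_{(\mu,\nu)\in S}\Pi(\iota_0[\mu],\iota_\W[\nu])$.

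No step presents a real obstacle; the only point requiring slight care is the support argument used to recover $\gamma$ from $\tilde\gamma$ in the reverse inclusion of the first equality, which uses that the prescribed marginals are concentrated entirely at times $0$ and $\W$.
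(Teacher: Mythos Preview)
Your proof is correct and follows exactly the approach the paper has in mind: the corollary is stated without proof precisely because the first equality is immediate from the preceding lemma (together with the support argument you give for the reverse inclusion), and the second equality is a direct application of Lemma \ref{lem:pi} once one observes that $\iota_0[\mu]$ and $\iota_\W[\nu]$ have disjoint supports so that $f^\pm \in \mathcal P(N^{T,\W})$.
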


The costs that we will consider over $G^{T,\W}$ will always be extensions of costs over $G^T$. Given $i \in \R^{E^{T,\W}}_{\geq 0}$, let us denote by $i' \in \R^{E^T}_{\geq 0}$ the $E^{T}$-coordinates of $i$. In other words
\[
i'_e = i_e \text{ for any } e\in E^T\ss E^{T,\W}.
\]
A cost $g\colon \R^{E^{T}}_{\geq0}\to \R^{E^{T}}_{\geq0}$ can then be extended to a cost $g \colon \R^{E^{T,\W}}_{\geq0}\to \R^{E^{T,\W}}_{\geq0}$ (denoted the same) such that
\[
g_e(i) = \begin{cases}
    g_e(i') \text{ if } e\in E^T,\\
    0 \text{ otherwise}.
\end{cases}
\]
In other words, it has no cost, and effect on the cost of other edges, to impose any flow towards the deposits. In a similar way we may extend a potential $H\colon \R^{E^{T}}_{\geq0}\to \R$ to a potential $H \colon \R^{E^{T,\W}}_{\geq0}\to \R$, namely $H(i)=H(i')$.

Next we show the equivalence between the sets of efficient equilibria $\operatorname{WE}(G^T,\Gamma^T,g)$ and $\operatorname{WE}(G^{T,\W},\Gamma^{T,\W},g)$. This is the content of Theorem \ref{thm:ext2} ahead. Before proceeding, we introduce some necessary constructions.


Any $\w' \in \operatorname{Path}(G^T)$ can be extended in a unique way to a $\W(\w') \in \operatorname{Path}(G^{T,\W})$, with just one additional edge ending the path in $N^\W$. Let us say that $(x,t)=(\w')^+$ is the final node of $\w'$, then $(x,t,\W) \in E^\W$ is the only edge we can add to $\w'$ in order to end $\W(\w')$ at $N^\W$. On the other hand, if $\w\in \operatorname{Path}(G^{T,\W})$ is a path (of length at least one) that ends at $N^\W$, then there exists a unique path $\w'\in \operatorname{Path}(G^T)$ such that $\W(\w')=\w$. In this case we say that $\w'$ is the restriction of $\w$ to $G^T$.

Given a path profile $q' \in \mathcal P(\operatorname{Path}(G^T))$ we construct the extension $\W[q']\in \mathcal P(\operatorname{Path}(G^{T,\W}))$ such that
\[
\W[q'](\w) := \begin{cases}
    q'(\w') \text{ if } \w=\W(\w'),\\
    0 \text{ otherwise}.
\end{cases}
\]
In other words, $\W[q']$ is supported on paths that end at $N^\W$, and for each of these paths assigns the same weight that $q'$ gives to their corresponding restriction. Also, if $q\in \mathcal P(\operatorname{Path}(G^{T,\W}))$ is supported on paths of length at least one that end at $N^\W$, there exists a unique $q' \in \mathcal P(\operatorname{Path}(G^T))$ such that $\W[q'] = q$. We say in this case that $q'$ is the restriction of $q$.

\begin{lemma}
    Let $q' \in \mathcal P(\operatorname{Path}(G^T))$ be supported on paths that start on $N\times\{0\}$. Then
    \[
    \iota[\pi[\gamma[q']]] = \gamma[\W[q']].
    \]
\end{lemma}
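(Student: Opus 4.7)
The plan is to verify the identity pointwise on $N^{T,\W} \times N^{T,\W}$. First I would observe that both transport plans are supported on $(N \times \{0\}) \times N^\W$: for $\iota[\pi[\gamma[q']]]$ this is immediate from the definition of $\iota$, while for $\gamma[\W[q']]$ it follows because every path in the support of $\W[q']$ starts at $(\w')^- \in N \times \{0\}$ (inherited from the hypothesis on $q'$) and ends in $N^\W$ (by construction of the extension $\W$). So it suffices to compute each side at a generic pair $((x,0),(y,\W))$ with $x,y \in N$.

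For the right-hand side, every $\w \in \operatorname{Path}_{(x,0),(y,\W)}(G^{T,\W})$ must end with a unique edge of $E^\W$ of the form $((y,s),(y,\W))$ with $s \in \{1,\ldots,T\}$. Removing this terminal edge yields a bijection between such $\w$ and paths $\w' \in \operatorname{Path}_{(x,0),(y,s)}(G^T)$ with $\w = \W(\w')$ and $\W[q'](\w) = q'(\w')$. Summing over $s$ and $\w'$ gives
\[
\gamma[\W[q']]((x,0),(y,\W)) = \sum_{s=1}^T \sum_{\w' \in \operatorname{Path}_{(x,0),(y,s)}(G^T)} q'(\w') = \sum_{s=1}^T \gamma[q']((x,0),(y,s)).
\]

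For the left-hand side, the definitions of $\iota$ and $\pi$ give
\[
\iota[\pi[\gamma[q']]]((x,0),(y,\W)) = \pi[\gamma[q']](x,y) = \sum_{t,s=0}^T \gamma[q']((x,t),(y,s)).
\]
The hypothesis on $q'$ forces $\gamma[q']((x,t),(y,s)) = 0$ whenever $t \neq 0$. Moreover, since every edge in $E^T$ strictly increases the time coordinate, the only paths in $G^T$ starting at $(x,0)$ that end at some $(y,0)$ are trivial (with $x=y$), which play no role in the intended applications. Under this convention the sum collapses to $\sum_{s=1}^T \gamma[q']((x,0),(y,s))$, which matches the right-hand side.

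The only delicate point is the treatment of length-zero paths at time $0$: they are the only potential source of disagreement between the two expressions. I expect this to be handled either by an implicit convention (that $q'$ carries no trivial mass at time $0$) or by the stronger reading that $q'$ is supported on paths actually reaching $N \times \{1,\ldots,T\}$. Once this is settled, the proof reduces to a direct unwinding of the definitions of $\iota$, $\pi$, $\gamma$, and $\W$.
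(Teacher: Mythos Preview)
Your argument is correct and follows essentially the same route as the paper: both verify the identity pointwise, first observing that both sides vanish off $(N\times\{0\})\times N^\W$, and then using the bijection $\w\mapsto\w'$ between paths in $\operatorname{Path}_{(x,0)(y,\W)}(G^{T,\W})$ and paths in $\bigcup_{s=1}^T\operatorname{Path}_{(x,0)(y,s)}(G^T)$ to match the two sums. Your explicit flag about trivial paths at time $0$ is a genuine observation---the paper silently makes the same jump from $\sum_{s=1}^T\gamma[q']((x,0),(y,s))$ to $\pi[\gamma[q']](x,y)$, which is justified in context because the extension map $\W$ is only defined on paths ending at time $\geq 1$ (there is no edge from $(x,0)$ to $N^\W$), so the hypothesis implicitly excludes mass on trivial paths.
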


\begin{proof}
    We have to show that for any $(x,t),(y,s) \in N^{T,\W}$ we have that
    \[
    \gamma[\W[q']]((x,t),(y,s)) = \begin{cases}
        \pi[\gamma[q']](x,y) \text{ if } (t,s)=(0,\W),\\
        0 \text{ otherwise}.
    \end{cases}
    \]
    
    From the definition of the transport plan
    \[
    \gamma[\W[q']]((x,t),(y,s)) = \sum_{\w\in\operatorname{Path}_{(x,t)(y,s)}(G^{T,\W})} \W[q'](\w).
    \]
    The extension $\W[q']$ is supported on paths that start at $N\times\{0\}$ (by hypothesis) and end at $N^\W$ (by definition of the map $\W$). Hence, we automatically have that $\gamma[\W[q']]((x,t),(y,s))=0$ if $(t,s)\neq(0,\W)$.

    In the case $(t,s)=(0,\W)$, we have that any $\w\in \operatorname{Path}_{(x,0)(y,\W)}(G^{T,\W})$ has a restriction $\w' \in \operatorname{Path}_{(x,0)(y,t)}(G^T)$ for some $t\in\{1,\ldots,T\}$. Then
    \begin{align*}
    \gamma[\W[q']]((x,0),(y,\W)) &= \sum_{\w \in \operatorname{Path}_{(x,0)(y,\W)}(G^{T,\W})} \W[q'](\w)\\
    &= \sum_{t=1}^T \sum_{\w'\in\operatorname{Path}_{(x,0)(y,t)}(G^{T})} q'(\w')\\
    &= \sum_{t=1}^T\gamma[q']((x,0),(y,t))\\
    &= \pi[\gamma[q']](x,y),
    \end{align*}
    which concludes the proof.
\end{proof}

\begin{corollary}
    Let $\Gamma \ss \mathcal P(N\times N)$ and $q' \in \mathcal P(\operatorname{Path}(G^T))$. Then, $q' \in \mathcal Q(\Gamma^T)$ if and only if $\W[q']\in \mathcal Q(\Gamma^{T,\W})$.
\end{corollary}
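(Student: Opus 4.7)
The plan is to exploit the identity $\iota[\pi[\gamma[q']]] = \gamma[\W[q']]$ provided by the preceding lemma, together with the injectivity of $\iota$. Once these are in hand, both directions reduce to tracking support conditions and translating between the definitions of $\Gamma^T$ and $\Gamma^{T,\W}$.

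For the forward implication I would start from $q' \in \mathcal Q(\Gamma^T)$ and unpack what this means for $\gamma[q']$: the support of $\pi^-[\gamma[q']]$ lies in $N\times\{0\}$, the support of $\pi^+[\gamma[q']]$ lies in $N\times\{1,\ldots,T\}$, and $\pi[\gamma[q']]\in\Gamma$. The second item is precisely what is needed for $\W(\w')$ to be defined on every $\w'$ in the support of $q'$, so $\W[q']$ makes sense as an element of $\mathcal P(\operatorname{Path}(G^{T,\W}))$. The first item lets me invoke the preceding lemma to obtain $\gamma[\W[q']] = \iota[\pi[\gamma[q']]]$, and the third places this in $\iota[\Gamma] = \Gamma^{T,\W}$.

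For the reverse implication I would assume $\W[q']\in\mathcal Q(\Gamma^{T,\W})$, so that $\gamma[\W[q']] = \iota[\gamma]$ for some $\gamma\in\Gamma$. Inspecting the support of $\iota[\gamma]$, which sits inside $(N\times\{0\})\times N^\W$, forces the paths in the support of $q'$ to start in $N\times\{0\}$ and to end in $N\times\{1,\ldots,T\}$ (the latter being implicit already in the fact that $\W(\w')$ has been applied). The preceding lemma then applies and yields $\iota[\pi[\gamma[q']]] = \gamma[\W[q']] = \iota[\gamma]$; since $\iota$ is injective (one recovers $\gamma$ by restricting to the $(0,\W)$-block), I conclude $\pi[\gamma[q']] = \gamma\in\Gamma$, which combined with the support conditions shows $\gamma[q']\in\Gamma^T$.

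The only genuinely subtle point is a bookkeeping one: the extension $\W(\w')$ is defined only when $(\w')^+\in N\times\{1,\ldots,T\}$, since these are the only nodes from which $E^\W$ contains an edge into $N^\W$. I expect this to be the spot where one has to pause and verify that the support hypotheses built into $\Gamma^T$ and into $\Gamma^{T,\W}$ match exactly what the map $\W$ requires; once that bookkeeping is settled the result follows immediately from the preceding lemma and the injectivity of $\iota$.
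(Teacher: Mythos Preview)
Your proposal is correct and follows essentially the same route as the paper's proof: both argue that either membership condition forces $q'$ to be supported on paths starting at $N\times\{0\}$, then invoke the identity $\gamma[\W[q']]=\iota[\pi[\gamma[q']]]$ from the preceding lemma for the forward direction, and combine this identity with the injectivity of $\iota$ for the reverse direction. Your discussion of the bookkeeping around where $\W(\w')$ is defined is in fact slightly more careful than the paper's own treatment.
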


\begin{proof}
    For $q'\in\mathcal P(\operatorname{Path}(G^T))$, either of the conditions, $q' \in \mathcal Q(\Gamma^T)$ or $\W[q']\in \mathcal Q(\Gamma^{T,\W})$, requires that $q'$ is supported on paths that start at $N\times\{0\}$.

    Let us assume first $\gamma[q'] \in \Gamma^T$. This means that $\gamma := \pi[\gamma[q']]\in \Gamma$ and then by the previous lemma $\gamma[\W[q']] = \iota[\gamma] \in \Gamma^{T,\W}$.

    Assume now that $\gamma[\W[q']] \in \Gamma^{T,\W}$. Observing that the map $\gamma \in \mathcal P(N\times N) \mapsto \iota[\gamma] \in \mathcal P(N^{T,\W}\times N^{T\times\W})$ is injective, we get once again by the previous lemma that $\pi[\gamma[q']] \in \Gamma$. The other conditions that define $\Gamma^T$ hold automatically for $\gamma[q']$, given that $q'$ is supported on paths that start at $N\times\{0\}$.
\end{proof}

The following theorem follows from the previous corollary.

\begin{theorem}\label{thm:ext2}
    Let $\Gamma \ss \mathcal P(N\times N)$, $g\colon \R^{E^T}_{\geq0}\to\R^{E^T}_{\geq0}$, and $q' \in \mathcal P(\operatorname{Path}(G^T))$. Then $q' \in \operatorname{WE}(G^T,\Gamma^T,g)$ if and only if $\W[q'] \in \operatorname{WE}(G^{T,\W},\Gamma^{T,\W},g)$.
\end{theorem}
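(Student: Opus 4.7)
The plan is to apply the characterization of efficient Wardrop equilibria given by Corollary \ref{cor:1} (second identity) simultaneously on $G^T$ and $G^{T,\W}$, after showing that every quantity appearing in that identity transforms in a controlled way under the extension $\W$. The feasibility step $q' \in \mathcal Q(\Gamma^T) \Leftrightarrow \W[q'] \in \mathcal Q(\Gamma^{T,\W})$ is the previous corollary, so I can restrict attention to such profiles.

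First I would record the basic geometric identities. Since $\W[q']$ assigns the weight $q'(\w')$ to $\W(\w')$ and weight zero to any other path, the edge flow satisfies $i[\W[q']]_e = i[q']_e$ for every $e\in E^T$, while $i[\W[q']]_{(x,t,\W)} = \pi^+[\gamma[q']](x,t)$ for the terminal edges. The extended cost $g$ on $G^{T,\W}$ depends only on the $E^T$-coordinates and vanishes on $E^\W$, so $g_e(i[\W[q']]) = g_e(i[q'])$ for $e\in E^T$ and $g_e(i[\W[q']]) = 0$ for $e\in E^\W$. Consequently, for every $\w'\in\operatorname{Path}(G^T)$ one has $L_{\W[q']}(\W(\w')) = L_{q'}(\w')$, and by the same token, for every $(x,0)\in N\times\{0\}$ and every $y\in N$,
\[
d_{\W[q']}((x,0),(y,\W)) = \min_{t\in\{1,\ldots,T\}} d_{q'}((x,0),(y,t)),
\]
because any path from $(x,0)$ to $(y,\W)$ in $G^{T,\W}$ must consist of a path in $G^T$ ending at some $(y,t)$ followed by the free edge $(y,t,\W)$.

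Next I would match the two sides of the identity from Corollary \ref{cor:1}. The left-hand side is immediate from the length identity:
\[
\sum_{\w\in\operatorname{Path}(G^{T,\W})} L_{\W[q']}(\w)\, \W[q'](\w) = \sum_{\w'\in\operatorname{Path}(G^T)} L_{q'}(\w')\, q'(\w').
\]
For the right-hand side, recall that $\Gamma^{T,\W} = \{\iota[\tilde\gamma]\mid \tilde\gamma\in\Gamma\}$ by construction, so
\[
\inf_{\gamma\in\Gamma^{T,\W}} \sum_{(x,t),(y,s)} d_{\W[q']}((x,t),(y,s))\,\gamma((x,t),(y,s)) = \inf_{\tilde\gamma\in\Gamma}\sum_{x,y}\Bigl(\min_t d_{q'}((x,0),(y,t))\Bigr)\,\tilde\gamma(x,y).
\]
On the other hand, any $\gamma\in\Gamma^T$ is supported in $(N\times\{0\})\times(N\times\{1,\ldots,T\})$ with $\pi[\gamma]\in\Gamma$, and since $\sum_{s=1}^T \gamma((x,0),(y,s))=\pi[\gamma](x,y)$ for each $(x,y)$, a direct linear programming argument (concentrate the mass $\pi[\gamma](x,y)$ on the time $t$ that minimizes $d_{q'}((x,0),(y,t))$) gives
\[
\inf_{\gamma\in\Gamma^T}\sum d_{q'}((x,t),(y,s))\,\gamma((x,t),(y,s)) = \inf_{\tilde\gamma\in\Gamma}\sum_{x,y}\Bigl(\min_t d_{q'}((x,0),(y,t))\Bigr)\,\tilde\gamma(x,y),
\]
so the two infima coincide.

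Combining these matchings, the second identity of Corollary \ref{cor:1} holds for $q'$ on $(G^T,\Gamma^T,g)$ if and only if it holds for $\W[q']$ on $(G^{T,\W},\Gamma^{T,\W},g)$, which is the desired equivalence of efficient Wardrop equilibria. The main technical obstacle is verifying the equality of the two infima over $\Gamma^T$ and $\Gamma^{T,\W}$; the subtlety is that in $\Gamma^T$ the transport may be distributed across several arrival times, whereas in $\Gamma^{T,\W}$ the arrival collapses onto $N^\W$. Both formulations turn out to select, as cheapest option, the best arrival time for each origin--destination pair, and making this selection rigorous is the substantive step in the argument.
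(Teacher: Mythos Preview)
Your proposal is correct and follows the same approach as the paper: use the feasibility corollary together with the characterization of efficient equilibria from Corollary~\ref{cor:1}, and verify that both sides of that identity are unchanged under $\W$ because the extended metric vanishes on $E^\W$. The paper's proof is considerably terser, simply asserting that ``in both graphs these computations coincide given that the metric vanishes on $E^\W$''; your treatment makes explicit the one point the paper leaves implicit, namely the equality of the Kantorovich infima over $\Gamma^T$ and $\Gamma^{T,\W}$ via the time-selection argument, which is indeed the substantive step.
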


\begin{proof}
    The previous corollary states that $q' \in \mathcal Q(\Gamma^T)$ if and only if $\W[q']\in \mathcal Q(\Gamma^{T,\W})$. Equilibria means in either case that the expected length equals the expected distance. In both graphs these computations coincide given that the metric vanishes on $E^\W$.
\end{proof}

\subsection{The constitutive relations for the dynamic Beckmann problem}\label{sec:cons_rel_dyn}

Let us fix in this section $S \subset \mathcal P(N) \times \mathcal P(N)$ and $\Gamma = \bigcup_{(\mu,\nu) \in S} \Pi(\mu,\nu)$.

As a consequence of Corollary \ref{cor:ext_gamma}, we demonstrated that the dynamic Wardrop problem for $\Gamma^T$ as above can be formulated as a Beckmann problem over $G^T$. The key observation is that $\Gamma^T = \Pi(G^T,\cM^T(S))$, where $\cM^T(S)$ is given by \eqref{eq:ext_gamma}. This allows us to apply the general equivalence results from Theorem \ref{thm:equiv_wardrop} and Theorem \ref{thm:equiv_opt} in this context.

Furthermore, by Theorem \ref{thm:ext2}, the dynamic Wardrop problem also has an equivalent formulation over $G^{T,\W}$. From Corollary \ref{cor:ext_gamma2}, we know that $\Gamma^{T,\W} = \Pi(G^{T,\W},\cM^\W(S))$, where $\cM^\W(S)$ is given by \eqref{eq:ext_gamma_omega}. Following a similar approach as in Section \ref{sec:some_dyn_eq}, we now present the equations that arise from the Beckmann problem for $\Gamma^{T,\W} = \Pi(G^{T,\W},\cM^\W(S))$.

\begin{theorem}\label{thm:main}
    Let for $k\in \{1,2\}$, $m_k \in \N$, $A_{k,\pm}\colon \R^N\to \R^{m_k}$ linear, $b_k \in \R^{m_k}$, and
    \[
    S = \{(\mu,\nu) \in \R^N\times \R^N \ | \ A_{1,-}[\mu]+A_{1,+}[\nu] = b_1, A_{2,-}[\mu]+A_{2,+}[\nu] \geq b_2\} \ss \mathcal P(N)\times \mathcal P(N).
    \]
    Given $H\in C^1(\R^{E^T})$, we have that for any $i \in \operatorname{BP}(G^{T,\W},\cM^\W(S),H)$, there exist $u \in \R^{N^{T,\W}}$ and $u_k\in \R^{m_k}$ such that $\iota_0^T[u] = A_{1,-}^T[u_1]-A_{2,-}^T[u_2]$, $\iota_\W^T[u] = A_{1,+}^T[u_1]-A_{2,+}^T[u_2]$, and
    \begin{align}
        \label{eq:main}
    \begin{cases}
        \min\{i,\nabla H(i)-Du\} = 0 \text{ in } E^{T,\W},\\
        \operatorname{div}i = 0 \text{ in } N\times\{1,\ldots,T\},\\
        \min\{A_{2,-}[\iota_0^T[\operatorname{div}i]] - A_{2,+}[\iota_\W^T[\operatorname{div}i]] - b_2, u_2\}=0,\\
        A_{1,-}[\iota_0^T[\operatorname{div}i]] - A_{1,+}[\iota_\W^T[\operatorname{div}i]] = b_1.\\
    \end{cases}
    \end{align}
    Moreover, if $H$ is convex, then the previous equations imply that $i \in \operatorname{BP}(G^{T,\W},\cM^\W(S),H)$.
\end{theorem}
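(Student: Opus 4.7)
The plan is to deduce this theorem directly from Lemma \ref{lem:4}, the general KKT characterization of Beckmann minimizers under affine equality and inequality constraints on the divergence. The main work is to rewrite $\cM^\W(S)$, as given by Corollary \ref{cor:ext_gamma2}, in the precise form required by that lemma, and then translate the resulting Lagrange multipliers back into the notation of the theorem.

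First I would observe that, for any $i \in \R^{E^{T,\W}}_{\geq 0}$, the structure of the extended graph $G^{T,\W}$ already forces $\operatorname{div} i \geq 0$ on $N \times \{0\}$ (whose nodes have only outgoing edges) and $\operatorname{div} i \leq 0$ on $N \times \{\W\}$ (whose nodes have only incoming edges). Therefore, the sign restrictions in the definition \eqref{eq:ext_gamma_omega} of $\cM^\W(S)$ are automatic and need not enter the KKT system. What remains of the constraint $\operatorname{div} i \in \cM^\W(S)$ is the equality $\operatorname{div} i = 0$ on the intermediate slice $N \times \{1,\ldots,T\}$, together with the $S$-constraints pulled back to $f = \operatorname{div} i$ via $\mu = \iota_0^T[f]$ and $\nu = -\iota_\W^T[f]$, namely $A_{1,-}[\iota_0^T[f]] - A_{1,+}[\iota_\W^T[f]] = b_1$ and $A_{2,-}[\iota_0^T[f]] - A_{2,+}[\iota_\W^T[f]] \geq b_2$. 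These are exactly the last two equations in \eqref{eq:main}.

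Next I would apply Lemma \ref{lem:4} with the equality operator obtained by stacking the intermediate-time restriction with the map $A_{1,-} \circ \iota_0^T - A_{1,+} \circ \iota_\W^T$, and the inequality operator $A_{2,-} \circ \iota_0^T - A_{2,+} \circ \iota_\W^T$. The corresponding multiplier $u \in \R^{N^{T,\W}}$ produced by that lemma is assembled as $A_1^T[\cdot] - A_2^T[\cdot]$; using the adjoint identity $(A_{k,\pm} \circ \iota_\bullet^T)^T = \iota_\bullet \circ A_{k,\pm}^T$ one reads off that the restriction of $u$ to $N \times \{0\}$ recovers $A_{1,-}^T[u_1] - A_{2,-}^T[u_2]$, its restriction to $N \times \{\W\}$ recovers the analogous combination involving $A_{1,+}^T$ and $A_{2,+}^T$, and on the intermediate times $u$ equals the free multiplier attached to the divergence-free condition. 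The constitutive relation, the complementary slackness condition, and the primal feasibility equations in \eqref{eq:main} are then precisely the stationarity and slackness equations produced by Lemma \ref{lem:4}.

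Finally, the converse statement when $H$ is convex is inherited from the corresponding sufficiency part of Lemma \ref{lem:4}: given any triple $(i, u_1, u_2)$ and any $u \in \R^{N^{T,\W}}$ whose boundary values obey the stated relations and which satisfies the system \eqref{eq:main}, the same formula $u = A_1^T[\cdot] - A_2^T[\cdot]$ lets us verify the hypotheses of the convex part of that lemma for $\cM^\W(S)$, yielding $i \in \operatorname{BP}(G^{T,\W},\cM^\W(S),H)$. I expect the only delicate step to be the bookkeeping of the signs introduced by the adjoints of $\iota_0$ and $\iota_\W$, and the recognition that the multiplier associated with the intermediate-time equality $\operatorname{div} i = 0$ is unconstrained, so $u$ on $N \times \{1,\ldots,T\}$ is effectively a free function interpolating the boundary data prescribed by $u_1$ and $u_2$ through the constitutive relation.
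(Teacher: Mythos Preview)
Your proposal is correct and follows essentially the same route as the paper's own proof: rewrite $\cM^\W(S)$ as an affine constraint set on $\R^{N^{T,\W}}$ (equality on the intermediate time slice together with the pulled-back $A_{1,\pm}$-, $A_{2,\pm}$-constraints) and then invoke the KKT computation of Lemma~\ref{lem:4}. The paper's proof is in fact terser than yours---it simply records the rewritten $\cM^\W(S)$ and refers back to the argument of Lemma~\ref{lem:4}---while you spell out the identification of the adjoints $(A_{k,\pm}\circ\iota_\bullet^T)^T=\iota_\bullet\circ A_{k,\pm}^T$ and the role of the free multiplier on $N\times\{1,\ldots,T\}$; these details are accurate and useful. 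One minor remark: your justification for dropping the sign restrictions (that the graph structure of $G^{T,\W}$ already forces the signs of $\operatorname{div} i$ on the initial and terminal slices) is valid for the feasible set of the Beckmann problem, whereas the paper instead relies on the standing hypothesis $S\subseteq\mathcal P(N)\times\mathcal P(N)$ to drop them at the level of $\cM^\W(S)$ itself; both arguments lead to the same KKT system.
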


We will refer to the first two equations in \eqref{eq:main} as the dynamic Beckmann equations over $G^{T,\W}$. These are the ones that will be common to this general family of problems. In the next section we will see how they can be used to obtain bounds on the support of the edge flow, and also to determine whether the flow can be extended by zero or not.

Comparing \eqref{eq:main_long} with \eqref{eq:main}, we observe that \eqref{eq:main} is a simplification of the system \eqref{eq:main_long}, which has more multipliers and equations. Nevertheless, since both systems describe the critical points of equivalent Beckmann problems, they are ultimately equivalent.

For reference, let us mention that the adjoint transformations $\iota^T_0\colon \R^{N^{T,\W}}\to \R^N$ and $\iota^T_\W\colon \R^{N^{T,\W}}\to \R^N$ are given by
\[
\iota_0^T[f](x) = f(x,0), \qquad \iota_\W^T[f](x) = f(x,\W).
\]

\begin{proof}
    Under the given assumptions, we can express $\cM^\W(S)$ in terms of affine constraints as follows:
    \begin{align*}
    \cM^\W(S) := \left\{f \in \R^{N^{T,\W}} \ \middle| \right.
    & f = 0 \text{ in } N \times \{1,\ldots,T\}\\
    & A_{1,-}[\iota_0^T[f]] - A_{1,+}[\iota_\W^T[f]] = b_1\\
    &\left. A_{2,-}[\iota_0^T[f]] - A_{2,+}[\iota_\W^T[f]] \geq b_2 \right\}.
    \end{align*}
    By computing the critical equations resulting from the Karush-Kuhn-Tucker conditions, as we already did in the proof of Lemma \ref{lem:4}, we derive the system equations in the theorem.
\end{proof}

We can separate the constitutive relations $\min\{i,\nabla H(i)-Du\} = 0$ in the two sets of edges $E^T$ and $E^\W$. Using that $\nabla H=0$ over $E^\W$, we get that
\[
\min\{i,-Du\} = 0 \text{ in } E^\W.
\]
If $(x,t_1),(x,t_2)\in N^T$ are such that $i(x,t_1,\W)$ and $i(x,t_2,\W)$ are both positive, then we necessarily have that
\[
u(x,t_1) = -u(x,\W)-\cancel{Du(x,t_1,\W)} = -u(x,\W)-\cancel{Du(x,t_2,\W)} = u(x,t_2).
\]
In other words, $u(x,\cdot)$ has to be constant on the times when $x$ sends mass towards $N^\W$.

    

In the case of the dynamic long-term problem given by $S = (\mu,\nu)$, we get that $\cM^\W(S) = \{\iota_0^T[\mu]-\iota_\W^T[\nu]\}$. The corresponding equations become
\[
\begin{cases}
    \min\{i,\nabla H(i)-Du\} = 0 \text{ in } E^{T,\W},\\
    \operatorname{div}i = \iota_0[\mu] - \iota_\W[\nu] \text{ in } N^{T,\W}.
\end{cases}
\]

For the minimal-time mean field game problem we have that $\cM^\W(S) = \{\iota_0^T[\mu] - \iota_\W^T[\nu] \ | \ \nu = 0 \text{ in } N\sm S\}$. The equations in this case are
\[
\begin{cases}
    \min\{i,\nabla H(i)-Du\} = 0 \text{ in } E^{T},\\
    \operatorname{div}i = \iota_0[\mu] \text{ in } N^{T,\W} \sm (S\times \{\W\}).
\end{cases}
\]

In both cases, these equations are a simplification of those displayed at \eqref{eq:long_term1} and \eqref{eq:min_time_mfg} respectively.

\subsection{Finite propagation for the edge flow}

For the following result we consider a time dependent potential $H\colon\R^{E}_{\geq0}\times\Z_{\geq 1} \to \R$. For each $T\geq 1$ we construct the extended potential $H^T\colon \R^{E^T}_{\geq0}\to\R$ by
\[
H^T(i) = \sum_{t=1}^T H(i(t),t).
\]
Notice that $g_{(e,t)} = \p_{(e,t)} H^T = \p_e H(\cdot,t)$ gives a cost that is local in time.

The following result states sufficient hypotheses on a potential $H^T$ in order to obtain a bound on the support of any solution of the dynamic Beckmann equation in $G^{T}$.

\begin{theorem}\label{thm:fin_prop2}
    Let $H\colon\R^{E}_{\geq0}\times\Z_{\geq 1} \to \R$ be a time dependent potential such that
    \begin{align*}
    m &:=  \inf\{\p_e H(i,t) \ | \ t\geq 1, i\in [0,1]^E, e\in E\}>0,\\
    M &:= \sup\{\p_e H(i,t) \ | \ t\geq 1, i\in [0,1]^E, e\in E\}<\8.
    \end{align*}
    For $T\geq \operatorname{diam}_{\mathbbm 1}(G)$, $H^T(i) = \sum_{t=1}^T H(i(t),t)$, $\mu,\nu\in\mathcal P(N)$, and any solution $i \in \R^{E^{T,\W}}_{\geq 0}$ of the equations
    \[\begin{cases}
        \min\{i,\nabla H^T(i)-Du\} = 0 \text{ in } E^{T,\W},\\
        \operatorname{div}i=\iota_0[\mu]-\iota_\W[\nu] \text{ in } N^{T,\W},
    \end{cases}\]
    it happens that
    \[
    T_0 := \max\{t\in \{1,\ldots, T\}\ | \ i(t) \neq 0\} \leq Mm^{-1}\operatorname{diam}_{\mathbbm 1}(\{\mu>0\},\{\nu>0\}),
    \]
\end{theorem}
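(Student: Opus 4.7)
The plan is to adapt Corollary \ref{cor:4} to the time-extended graph $G^{T,\W}$, exploiting the fact that this graph is acyclic to turn the bound $m \leq \partial_e H(i,t) \leq M$---valid only on $[0,1]^E$---into a pointwise estimate on $\xi := \nabla H^T(i)$. First I would establish $i \leq \mathbbm 1$: since $\operatorname{div} i = \iota_0[\mu] - \iota_\W[\nu]$ with $\iota_0[\mu], \iota_\W[\nu] \in \mathcal P(N^{T,\W})$ supported on the disjoint sets $N \times \{0\}$ and $N \times \{\W\}$, Lemma \ref{lem:smirnov} produces a path profile $q \in \mathcal Q(\Pi(\iota_0[\mu], \iota_\W[\nu]))$ with $i[q] \leq i$. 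The residual $j := i - i[q] \geq 0$ satisfies $\operatorname{div} j = 0$. Crucially, $G^{T,\W}$ is acyclic: every edge in $E^T$ strictly increases the time coordinate and every edge in $E^\W$ points into the sink $N^\W$, which has no outgoing edges. Therefore, following positive $j$-flow forward from any $e \in \{j > 0\}$ must terminate at some $(y,\W) \in N^\W$, contradicting $\operatorname{div} j(y,\W) = 0$ there. Hence $j = 0$, so $i = i[q] \leq \mathbbm 1$, and consequently $\xi(e,t) \in [m,M]$ on $E^T$ while $\xi = 0$ on $E^\W$.

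Next, fix $t_0 \in \{1, \ldots, T\}$ with $i(t_0) \neq 0$ and pick some $e_0 \in E$ with $i(e_0, t_0) > 0$. I build a path $\bar\w \subseteq \{i > 0\}$ passing through the edge $(e_0, t_0)$ and running from some $(x_0, 0)$ with $\mu(x_0) > 0$ to some $(y_0, \W)$ with $\nu(y_0) > 0$. For the backward extension from $(e_0^-, t_0 - 1)$: at each interior time node, where $\operatorname{div} i = 0$, the positive outflow forces positive inflow along an $E^T$-edge (since $E^\W$-edges have no interior targets); iterating backward terminates at some $(x_0, 0) \in N \times \{0\}$, and $\operatorname{div} i(x_0,0) = \mu(x_0) > 0$ follows from the remaining positive outflow there. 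The forward extension from $(e_0^+, t_0)$ is symmetric: divergence vanishes at interior nodes so flow propagates, and since $N \times \{T\}$ has no outgoing $E^T$-edges, the path must eventually exit through an $E^\W$-edge to some $(y_0, \W)$, where $-\operatorname{div} i(y_0, \W) = \nu(y_0) > 0$. By Lemma \ref{lem:pontryagin} applied in $G^{T,\W}$, the path $\bar\w$ is a geodesic for $\xi$, so $L_\xi(\bar\w) = d_\xi((x_0,0), (y_0,\W))$.

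The final step combines a lower bound on $L_\xi(\bar\w)$ with an upper bound on $d_\xi((x_0,0), (y_0,\W))$. The $E^T$-portion of $\bar\w$ consists of exactly $T' \geq t_0$ edges (one per time step from $0$ to $T'$), each of weight at least $m$, while the single $E^\W$-edge carries weight $0$, so $L_\xi(\bar\w) \geq m\, t_0$. For the upper bound, pick $\w^* \in \operatorname{Path}_{x_0 y_0}(G)$ realizing $d_{\mathbbm 1}(x_0, y_0)$; the hypothesis $T \geq \operatorname{diam}_{\mathbbm 1}(G)$ allows lifting $\w^*$ to $(x_0,0) \to (x_1,1) \to \cdots \to (y_0, d_{\mathbbm 1}(x_0,y_0)) \to (y_0, \W)$ in $G^{T,\W}$, which is a competitor of $\xi$-length at most $M\, d_{\mathbbm 1}(x_0, y_0)$. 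Combining yields $t_0 \leq Mm^{-1} d_{\mathbbm 1}(x_0, y_0) \leq Mm^{-1}\operatorname{diam}_{\mathbbm 1}(\{\mu>0\}, \{\nu>0\})$, and taking the maximum over all valid $t_0$ settles the theorem. The main obstacle is the first step: because $\nabla H$ is only bounded on $[0,1]^E$, the estimate $\xi \in [m,M]$ is not automatic but must be deduced from the equations themselves, and it is precisely the acyclic structure of $G^{T,\W}$ that makes this possible without any convexity assumption on $H$.
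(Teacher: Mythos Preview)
Your proof is correct and follows essentially the same route as the paper: bound $i\leq \mathbbm 1$ so that $m\leq \xi\leq M$ on $E^T$, then use Lemma~\ref{lem:pontryagin} (the paper packages this as Corollary~\ref{cor:4}) together with the lifting of $\mathbbm 1$-geodesics from $G$ into $G^{T,\W}$. The only noteworthy difference is in the first step: the paper obtains $i(t)\in[0,1]^E$ more directly by summing the divergence constraints over $N\times\{t\}$ to see that $t\mapsto\sum_{e\in E}i(e,t)$ is non-increasing with value $1$ at $t=1$, whereas you invoke the Smirnov decomposition and the acyclicity of $G^{T,\W}$ to conclude $i=i[q]$ for some $q\in\mathcal P(\operatorname{Path}(G^{T,\W}))$.
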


\begin{proof}
    Given $i$ a solution of the dynamic Beckmann equations, let $\xi = \nabla H^T(i)$. By the divergence conditions we get that $t\mapsto \sum_{e\in E} i(e,t)$ is a decreasing function with $\sum_{e\in E} i(e,1) = \sum_{x\in N} \m(x) = 1$. Therefore, we must have that $i(t)\in [0,1]^E$, and from the hypotheses on $H$
    \[
    m \mathbbm 1_{E^T} \leq \xi \leq M\mathbbm 1_{E^T}.
    \]

    From $m \mathbbm 1_{E^T} \leq \xi$ and the construction of $T_0$
    \[
    mT_0 = m\operatorname{in-diam}_{\mathbbm 1_{E^T}}(\{i>0\}) \leq \operatorname{in-diam}_\xi(\{i>0\}).
    \]
    By Corollary \ref{cor:4}
    \[
    \operatorname{in-diam}_\xi(\{i>0\}) \leq \operatorname{diam}_\xi(\{\iota_0[\mu]>0\},\{\iota_\W[\nu]>0\}).
    \]
    From $\xi \leq M\mathbbm 1_{E^T}$
    \[
    \operatorname{diam}_\xi(\{\iota_0[\mu]>0\},\{\iota_\W[\nu]>0\}) \leq M\operatorname{diam}_{\mathbbm 1_{E^T}}(\{\iota_0[\mu]>0\},\{\iota_\W[\nu]>0\}).
    \]
    
    To finish the proof we will show that if $T \geq \operatorname{diam}_{\mathbbm 1}(G)$, then
    \[
    \operatorname{diam}_{\mathbbm 1_{E^T}}(\{\iota_0[\mu]>0\},\{\iota_\W[\nu]>0\}) = \operatorname{diam}_{\mathbbm 1}(\{\mu>0\},\{\nu>0\}).
    \]
    Let $f\colon \{\w\in \operatorname{Path}(G) \ | \ L_{\mathbbm 1}(\w)\leq T\}\to \{\w \in \operatorname{Path}(G^{T,\W}) \ | \ \w^- \in N\times\{0\}, \w^+\in N^\W\}$ such that for $\w = (x_0,\ldots,x_\ell)$
    \[
    f(\w) = ((x_0,0),\ldots,(x_\ell,\ell),(x_\ell,\W)).
    \]
    This map is actually a bijection and satisfies $L_{\mathbbm 1_{E^T}}(f(\w)) = L_{\mathbbm 1}(\w)$. Then it also establishes a bijection between $\operatorname{Geod}(G,\mathbbm 1)$ and $\operatorname{Geod}(G^{T,\W},\mathbbm 1_{E^T})$, from where we finish the last step of the proof.
\end{proof}

Now we would like to analyze when it is possible to extend a given solution of the dynamic constitutive relation by zero.

\begin{theorem}\label{thm:dyn_ext}
    Let $H\colon\R^{E}_{\geq0}\times\Z_{\geq 1} \to \R$ be a time dependent potential and let $H^T(i) = \sum_{t=1}^T H(i(t),t)$. Let $i \in \R^{E^{T,\W}}_{\geq 0}$ be such that for some $u\in \R^{N^{T,\W}}$
    \[\begin{cases}
        \min\{i,\nabla H^T(i)-Du\} = 0 \text{ in } E^{T,\W},\\
        \operatorname{div}i=0 \text{ in } N\times\{1,\ldots,T\}.
    \end{cases}\]
    Let $j=i\mathbbm 1_{E^{T,\W}} \in \R^{E^{T+1,\W}}_{\geq 0}$ be the extension of $i$ by zero, and $\xi = \nabla H^{T+1}(j)$. Then, there exists $v\in \R^{N^{T+1,\W}}$ such that
    \[\min\{j,\nabla H^{T+1}(j)-Dv\} = 0 \text{ in } E^{T+1,\W}\]
    if and only if for every $e_1=(x_1,y_1),\ldots,e_k=(x_k,y_k),e_{k+1}=e_1\in E$ one has that
    \[
    \sum_{j=1}^k \p_{e_j}H(0,T+1) + d_{\operatorname{Sym}_{\{i>0\}}[\xi]}((y_j,\W),(x_{j+1},T))\geq 0.
    \]
\end{theorem}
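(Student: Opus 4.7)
My plan is to invoke Corollary \ref{cor:ext} applied to the extension $j$, which reduces the existence of a multiplier $v\in \R^{N^{T+1,\W}}$ to two conditions on $\xi=\nabla H^{T+1}(j)$: oddness of $\xi$ over $\{j>0\}$, and the non-negative loop condition for $\operatorname{Sym}_{\{j>0\}}[\xi]$ on $\operatorname{Sym}_{\{j>0\}}(G^{T+1,\W})$. Because $\{j>0\}=\{i>0\}\ss E^{T,\W}$, and $\xi$ agrees with $\nabla H^{T}(i)$ on $E^{T,\W}$, the oddness condition is inherited for free from the assumption that $(i,u)$ already satisfies the constitutive relation on $G^{T,\W}$ (again via Corollary \ref{cor:ext}). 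The theorem thus reduces to characterizing when the non-negative loop condition holds on the enlarged symmetrized graph.

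The next step is to analyze the structure of loops in $\operatorname{Sym}_{\{j>0\}}(G^{T+1,\W})$. Compared with $\operatorname{Sym}_{\{i>0\}}(G^{T,\W})$, the extended graph adds the nodes $N\times\{T+1\}$ together with the new forward edges $(e,T+1)=((x,T),(y,T+1))$ for $e=(x,y)\in E$, carrying cost $\partial_e H(0,T+1)$, and the zero-cost edges $((x,T+1),(x,\W))$. Crucially, since $\{i>0\}\ss E^{T,\W}$, the symmetrization introduces no reversed edges incident to $N\times\{T+1\}$; hence at each node $(y,T+1)$ the only outgoing edge is to $(y,\W)$ and the only incoming edges come from $N\times\{T\}$. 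Consequently, any loop that avoids $N\times\{T+1\}$ lives inside $\operatorname{Sym}_{\{i>0\}}(G^{T,\W})$ and has non-negative length by hypothesis on $i$, whereas any loop visiting $N\times\{T+1\}$ is forced to do so through $k\geq 1$ excursions of the shape $(x_j,T)\to(y_j,T+1)\to(y_j,\W)$ indexed by edges $e_j=(x_j,y_j)\in E$.

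Given this excursion structure, the length of such a loop decomposes as $\sum_{j=1}^k[\partial_{e_j}H(0,T+1)+L_j]$, where $L_j$ is the length of the $j$-th intermediate segment, a path inside $\operatorname{Sym}_{\{i>0\}}(G^{T,\W})$ from $(y_j,\W)$ to $(x_{j+1},T)$ under the cyclic convention $e_{k+1}=e_1$. Minimizing freely over the intermediate segments replaces each $L_j$ by the geodesic distance $d_{\operatorname{Sym}_{\{i>0\}}[\xi]}((y_j,\W),(x_{j+1},T))$, which is well defined and finite (or properly handled as $+\infty$, giving no constraint) thanks to the non-negative loop condition on $\operatorname{Sym}_{\{i>0\}}(G^{T,\W})$. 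This identifies the non-negative loop condition on $\operatorname{Sym}_{\{j>0\}}(G^{T+1,\W})$ with precisely the stated family of inequalities, quantified over all $k\geq 1$ and $e_1,\ldots,e_k\in E$, yielding the claimed equivalence.

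The main technical obstacle is the bookkeeping of the previous paragraph: an arbitrary loop in the symmetrized graph may backtrack along reversed $E^\W$-edges and other reversed edges near $N\times\{T\}$, and one must verify that any such loop genuinely admits the described excursion decomposition with intermediate segments lying in $\operatorname{Sym}_{\{i>0\}}(G^{T,\W})$ (so that distance replacement is legitimate). Once this topological step is handled, the quantitative equivalence is immediate; the case $k=1$ is the smallest instance and already captures the minimal closure condition required for a single-excursion extension.
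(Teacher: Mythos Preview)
Your proposal is correct and follows essentially the same route as the paper: reduce via Corollary \ref{cor:ext} to the non-negative loop condition on $\operatorname{Sym}_{\{j>0\}}(G^{T+1,\W})$, observe that $\{j>0\}=\{i>0\}\ss E^{T,\W}$ so loops confined to the old graph are already handled, and decompose any loop visiting $N\times\{T+1\}$ into excursions $(x_j,T)\to(y_j,T+1)\to(y_j,\W)$ separated by paths in $\operatorname{Sym}_{\{i>0\}}(G^{T,\W})$, then replace those intermediate paths by geodesic distances. Your explicit justification of the excursion structure (only one outgoing edge at $(y,T+1)$, no reversed edges incident to the new layer) is slightly more detailed than the paper's, which simply asserts the decomposition.
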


\begin{proof}
    By Corollary \ref{cor:ext}, we have to check that $\operatorname{Sym}_{\{j>0\}}[\xi]$ has the non-negative loop condition. We notice first that $\{j>0\} = \{i>0\} \ss E^{T,\W}$, hence for any loop $\w$ contained in $\operatorname{Sym}_{\{j>0\}}(G^{T,\W})$ we obtain that $L_{\operatorname{Sym}_{\{j>0\}}[\xi]}(\w) \geq 0$, because $i$ is by hypothesis a solution to the dynamic Beckmann equations. So, we must focus on loops that are not contained in $\operatorname{Sym}_{\{j>0\}}(G^{T,\W})$.

    For every loop $\w \in \operatorname{Path}(\operatorname{Sym}_{\{j>0\}}(G^{T+1,\W}))$ that is not contained in $\operatorname{Sym}_{\{j>0\}}(G^{T,\W})$, let $e_1=(x_1,y_1),\ldots,e_k=(x_k,y_k) \in E$ such that each one of the edges $(e_1,T+1),\ldots,(e_k,T+1)$ appears in $\w$ in this given order.
    
    Let us assume without loss of generality that $\w^- = (x_1,T)$ and $\w$ is the concatenation of $\w_1,\bar\w_1,\ldots,\w_k,\bar\w_k$ where
    \begin{align*}
        \w_j = ((x_j,T),(y_j,T+1),(y_j,\W)),\qquad \bar \w_j \in \operatorname{Path}_{(y_j,\W),(x_{j+1},T)}(\operatorname{Sym}_{\{j>0\}}G^{T,\W}), \qquad x_{k+1}=x_1.
    \end{align*}
    
    The non-negative loop condition then means that
    \[
    L_{\operatorname{Sym}_{\{j>0\}}}(\w) = \sum_{j=1}^k \1L_{\operatorname{Sym}_{\{j>0\}}}(\w_j) + L_{\operatorname{Sym}_{\{j>0\}}}(\bar \w_j) \2 \geq 0.
    \]
    The first term in the sum is just $L_{\operatorname{Sym}_{\{j>0\}}}(\w_j)  = \p_{e_j}H(0,T+1)$, meanwhile that for the second the lowest possible value is given by the distance
    \[
    L_{\operatorname{Sym}_{\{j>0\}}}(\bar \w_j) \geq d_{\operatorname{Sym}_{\{i>0\}}[\xi]}((y_j,\W),(x_{j+1},T)).
    \]
    This shows that the criteria stated in the theorem is true.
\end{proof}

The previous theorem may be unpractical as we have to compute the symmetrizations and distances. The following lemma gives an easier positive criteria for the extension in terms of a multiplier for the solution.

\begin{corollary}\label{cor:5}
    Let $H\colon\R^{E}_{\geq0}\times\Z_{\geq 1} \to \R$ be a time dependent potential and let $H^T(i) = \sum_{t=1}^T H(i(t),t)$. Let $(i,u) \in \R^{E^{T}}_{\geq 0}\times \R^{N^{T,\W}}$ be such that 
    \[\begin{cases}
        \min\{i,\nabla H^T(i)-Du\} = 0 \text{ in } E^{T,\W},\\
        \operatorname{div}i=0 \text{ in } N\times\{1,\ldots,T\}.
    \end{cases}\]
    Let $j=i\mathbbm 1_{E^{T,\W}} \in \R^{E^{T+1,\W}}_{\geq 0}$ be the extension of $i$ by zero. Then there exists $v\in \R^{N^{T+1,\W}}$ such that
    \[\min\{j,\nabla H^{T+1}(j)-Dv\} = 0 \text{ in } E^{T+1,\W}\]
    if for every $e=(x,y) \in E$
    \[
    \p_{e} H(0,T+1) + u(x,T) - u(y,\W)\geq 0.
    \] 
\end{corollary}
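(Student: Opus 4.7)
The plan is to construct $v \in \R^{N^{T+1,\W}}$ explicitly from the given multiplier $u$, and then verify the constitutive relation edge by edge. The only nodes of $N^{T+1,\W}$ that are not already in $N^{T,\W}$ are those of the form $(x,T+1)$ with $x\in N$. The natural choice, which makes the verification on the new $\W$-edges automatic, is
\[
v := u \text{ on } N^{T,\W}, \qquad v(x,T+1) := u(x,\W) \text{ for every } x\in N.
\]

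Next I would partition the edges of $E^{T+1,\W}$ into four groups and check $\min\{j, \nabla H^{T+1}(j) - Dv\} = 0$ on each. On the edges $(e,t)$ with $t\in\{1,\ldots,T\}$ nothing has changed: $j=i$, $\nabla H^{T+1}(j) = \nabla H^T(i)$, and $Dv = Du$ since $v$ agrees with $u$ at both endpoints. The same argument handles the old $\W$-edges $(x,t,\W)$ with $t\in\{1,\ldots,T\}$, using that $\nabla H^{T+1}$ vanishes on $E^\W$ (no cost on deposit edges). Thus the constitutive relation inherited from $(i,u)$ takes care of these edges.

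The substantive case is the new time-evolution edges $(e,T+1)=((x,T),(y,T+1))$, where $e=(x,y)\in E$. Since $j$ is the extension of $i$ by zero, we have $j(e,T+1)=0$ and $j(\cdot,T+1)\equiv 0$, so $\nabla H^{T+1}(j)(e,T+1) = \p_e H(0,T+1)$. The constitutive relation at such an edge reduces to
\[
\p_e H(0,T+1) - Dv(e,T+1) = \p_e H(0,T+1) - (u(y,\W)-u(x,T)) \geq 0,
\]
which is precisely the hypothesis. Finally, for the new $\W$-edges $(x,T+1,\W)$ we have $j=0$ and $\nabla H^{T+1}(j)=0$, so we need $-Dv(x,T+1,\W) = v(x,T+1)-v(x,\W) \geq 0$; by construction this difference equals $u(x,\W)-u(x,\W)=0$, so the condition holds with equality.

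I do not expect genuine difficulty in this argument; the main subtlety is simply organizing the case distinction and remembering that, by the conventions of Section \ref{sec:aux_cons}, the gradient of $H^{T+1}$ vanishes on every edge of $E^\W$ and that the potential at time $T+1$ is evaluated at $j(\cdot,T+1)=0$. Once $v$ is set by the formula above, each of the four cases reduces to either the original relation for $(i,u)$, the stated hypothesis, or an equality by construction, which concludes the proof.
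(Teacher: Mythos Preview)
Your proof is correct and more direct than the paper's. You extend the multiplier explicitly by $v(x,T+1):=u(x,\W)$ and verify $\min\{j,\nabla H^{T+1}(j)-Dv\}=0$ on each of the four edge types; each case reduces, as you say, either to the original relation for $(i,u)$, to the stated hypothesis, or to an identity by construction.

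The paper takes a different route: it invokes Theorem \ref{thm:dyn_ext}, whose criterion for extendability is the non-negative loop condition on the symmetrized graph, expressed through the distances $d_{\operatorname{Sym}_{\{i>0\}}[\xi]}((y_j,\W),(x_{j+1},T))$. It then uses the 1-Lipschitz estimate (Lemma \ref{lem:222}) to bound each such distance from below by $u(x_{j+1},T)-u(y_j,\W)$, so that the hypothesis on single edges forces the cyclic sums in Theorem \ref{thm:dyn_ext} to be non-negative. Your argument bypasses both the symmetrization machinery and the loop criterion entirely by exhibiting the multiplier, which is shorter and self-contained. The paper's approach, on the other hand, makes explicit how Corollary \ref{cor:5} sits as a specialization of the general extension criterion of Theorem \ref{thm:dyn_ext}, and clarifies that the single-edge hypothesis is a (possibly strict) strengthening of the necessary and sufficient loop condition there.
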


\begin{proof}
    Given $\xi = \nabla H^{T+1}(j)$, we apply the 1-Lipschitz type estimate in Lemma \ref{lem:222}
    \[
    d_{\operatorname{Sym}_{\{i>0\}}[\xi]}((y,\W),(x,T))\geq u(x,T) - u(y,\W).
    \]
    Therefore, the hypothesis implies the assumptions in Theorem \ref{thm:dyn_ext}.
\end{proof}




\section{Analysis of some dynamic problems}\label{sec:dyn_ex}

In this final section, we address the dynamic Beckmann problem for one of the simplest graphs with non-local interactions, specifically the intersection of two roads, as outlined in the introduction and illustrated in Figure \ref{fig:2roads}. As a preliminary exercise, we will analyze the case of a single independent road in the first part of this section.

\subsection{One road}\label{sec:dyn_ex0}

Consider the very simple graph $G = (N,E)$ with $N=\{a,b\}$ and $E=\{(a,a),(a,b)\}$, illustrated by the Figure \ref{fig:intersection1}. The transport plan consists on sending one unit of mass from the node $a$ to the node $b$, so we fix $\Gamma = \Pi(\mu,\nu)$ where
\begin{align}\label{eq:10}
\m(a) = \nu(b)=1, \qquad \m(b)=\nu(a)=0.
\end{align}

\begin{figure}
    \centering
\begin{tikzpicture}[every loop/.style={}]
  \node[circle, draw] (2) at (2,0) {$b$};
  \node[circle, draw] (4) at (-2,0) {$a$};

  \draw[-{Latex[length=3mm]}] (4) -- (2);

  \path[-{Latex[length=3mm]}] (4) edge [loop left] node {} ();
\end{tikzpicture}
    \caption{A graph modelling a simple road.}
    \label{fig:intersection1}
\end{figure}
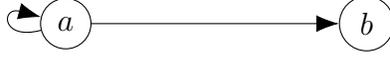



    

We may also consider the graph with a loop at the node $b$ or an edge from $b$ to $a$. However, it is not difficult to realize that in many cases of interest we can restrict the problem to equilibria that do not increase its cost by avoiding the loop $(b,b)$ or the edge $(b,a)$.

Let $i\colon E^{T,\W}\to [0,\8)$ such that $\div i = \iota_0[\mu]-\iota_\W[\nu]$. Our first observation is that the values of $i$ over the edges of the form $(a,a,t)$ determine the rest of the values of $i$. Let
\[
j(t) := i(a,a,t).
\]

If we declare $j(0)=1$ and $j(T)=0$, then we get that the divergence equations are equivalent to setting for $t\geq 1$
\[
i(a,b,t) = i(b,t,\W)= j(t-1)-j(t) =: -D^-j(t), \qquad i(a,t,\W)=0.
\]
Notice that $j$ must be non-increasing.


Let $H=H(i_{aa},i_{ab},t)\colon\R^{E}_{\geq0}\times\Z_{\geq 1} \to \R$ be a time dependent potential. We then construct the potential $H^T(i) = \sum_{t = 1}^T H(i(t),t)$. Due to the previous considerations, we obtain that the Beckmann optimization problem is equivalent to
\[
    \min \3 \sum_{t=1}^T H(j(t),-D^-j(t),t) \ | \ j \geq 0, D^-j \leq 0, j(0)=1, j(T)=0\4.
\]

\subsubsection{The obstacle problem}

In this section we show that the dynamic Beckmann problem is equivalent to an obstacle problem for $j$ under mild assumptions on the potential.

\begin{definition}
    Let $L=L(j,\d j,t)\colon\R_{\geq0}\times\R\times \Z_{\geq 1}\to \R$. Given $T\geq1$, and $j_0,j_T\geq 0$, the obstacle problem consists on computing
    \[
    \operatorname{OP}^T(j_0,j_T,L) := \argmin\3\sum_{t=1}^T L(j(t),D^-j(t),t) \ | \ j \geq 0, j(0)=j_0, j(T)=j_T\4.
    \]
\end{definition}

An immediate observation is that if we let $L(j,\d j,t) = H(j,|\d j|,t)$ then
\begin{align*}
&\min \3 \sum_{t=1}^T L(j(t),D^-j(t),t) \ | \ j \geq 0, j(0)=1, j(T)=0\4\\
\leq &\min \3 \sum_{t=1}^T H(j(t),-D^-j(t),t) \ | \ j \geq 0, D^-j \leq 0, j(0)=1, j(T)=0\4.
\end{align*}
The following results gives sufficient conditions for the equivalence of these two problems.

\begin{lemma}\label{lem:obs}
    Let $G=(N,E)$ with $N=\{a,b\}$ and $E=\{(a,a),(a,b)\}$. Let $\m,\nu\colon N\to[0,1]$ as defined in \eqref{eq:10}. Consider $H=H(i_{aa},i_{ab},t)\colon\R^{E}_{\geq0}\times\Z_{\geq 1} \to \R$ be a time dependent potential such that for each $t\geq 1$ and $i,h\in \R^E_{\geq0}$ with $h_{ab}>0$, it holds that $H(i+h)>H(i)$. Then, for $H^T(i) = \sum_{t=1}^T H(i(t),t)$ and $L(j,\d j,t) = H(j,|\d j|,t)$, we have
    \begin{align*}
    \{j = i(a,a,\cdot) \ | \ i \in \operatorname{BP}^T(G,\mu,\nu,H^T) \} = \operatorname{OP}^T(1,0,L).
    \end{align*}
\end{lemma}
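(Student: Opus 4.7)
The lemma will follow from the structural claim: \emph{every minimizer $j^*$ of $\operatorname{OP}^T(1,0,L)$ is non-increasing}. Indeed, the discussion preceding the statement already establishes that, once $j(0)=1$ and $j(T)=0$ are fixed, the divergence constraint $\operatorname{div} i = \iota_0[\mu]-\iota_\W[\nu]$ forces $i(a,b,t) = i(b,t,\W) = -D^- j(t)$ and $i(a,t,\W)=0$, with $i \geq 0$ equivalent to $j$ being non-increasing; and on non-increasing $j$ one has $|D^-j| = -D^-j$, so the Beckmann energy $\sum_t H(j(t), -D^-j(t), t)$ equals the obstacle energy $\sum_t L(j(t), D^-j(t), t)$. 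Granting the claim, both minimizer sets reduce to the non-increasing $j$ that minimize the common energy under the same boundary conditions, proving the equality in the lemma.

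To prove the structural claim, I would introduce the running minimum $\tilde j(0):=1$, $\tilde j(t) := \min(\tilde j(t-1), j^*(t))$. By construction $\tilde j$ is non-increasing, $0 \leq \tilde j \leq j^*$ pointwise, and $\tilde j(T) \leq j^*(T) = 0$, hence $\tilde j$ is OP-feasible. I then compare the OP energies term-by-term. At each $t$ either $j^*(t) \leq \tilde j(t-1)$---in which case $\tilde j(t) = j^*(t)$, and this regime forces $j^*(t-1) \geq \tilde j(t-1) \geq j^*(t)$ (otherwise the chain $j^*(t) \leq \tilde j(t-1) \leq j^*(t-1) < j^*(t)$ contradicts itself), so $|D^-\tilde j(t)| = \tilde j(t-1) - j^*(t) \leq j^*(t-1) - j^*(t) = |D^-j^*(t)|$ and the weak bound $\partial_{ab}H \geq 0$ yields no increase in cost---or else $j^*(t) > \tilde j(t-1)$, in which case $\tilde j(t) = \tilde j(t-1) < j^*(t)$ and $|D^-\tilde j(t)| = 0$, so the weak bound $\partial_{aa} H \geq 0$ together with the strict hypothesis $H(i+h) > H(i)$ for $h_{ab} > 0$ gives $H(\tilde j(t), 0, t) \leq H(j^*(t), |D^-j^*(t)|, t)$, strict whenever $|D^-j^*(t)| > 0$. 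If $j^*$ were not non-increasing, letting $t_0$ be the smallest index with $j^*(t_0) > j^*(t_0-1)$, the comparison gives no change for $t < t_0$ (where $\tilde j \equiv j^*$), a strict improvement at $t = t_0$ (we are in the second regime with $|D^-j^*(t_0)| = j^*(t_0) - j^*(t_0-1) > 0$), and no increase for $t > t_0$, producing a strictly smaller OP energy and contradicting optimality of $j^*$.

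The main subtlety is that the monotonicity of $H$ is asymmetric: only weak in the $i_{aa}$-slot (from $\nabla H \geq 0$), but strict in the $i_{ab}$-slot via the hypothesis $H(i+h) > H(i)$ for $h_{ab} > 0$. Every strict improvement must therefore be extracted from the $|D^-j|$-coordinate, and the running-minimum construction is designed precisely for this: it flattens every upward jump of $j^*$ into a zero discrete derivative, so that the strict gain is realized through the $i_{ab}$-slot, while the simultaneous decrease in the $i_{aa}$-slot is absorbed by the weaker bound. Any attempt at a more local modification---say, altering $j^*$ at only one time---would have to pay for increases in $|D^-j|$ at neighboring times, so the cumulative construction is essential.
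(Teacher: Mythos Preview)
Your proof is correct and takes a genuinely different route from the paper. The paper argues by a \emph{local} perturbation: it selects a local maximum $t_0$ of $j^*$ (with $D^-j^*(t_0)>0$ and $D^-j^*(t_0+1)\leq 0$), sets $j:=j^*-\varepsilon\mathbbm 1_{t_0}$ with $\varepsilon=j^*(t_0)/2$, and uses the strict monotonicity of $H$ to claim the $L$-value drops strictly at both $t_0$ and $t_0+1$, contradicting optimality. Your \emph{global} running-minimum $\tilde j(t)=\min_{s\leq t}j^*(s)$ produces a feasible non-increasing competitor in one stroke, and your case split (tracking vs.\ plateau) gives a clean term-by-term comparison with no $\varepsilon$-tuning. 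This buys robustness: in particular your argument handles a flat top ($D^-j^*(t_0+1)=0$) without modification, whereas the one-point perturbation then \emph{increases} $|D^-j|$ at $t_0+1$ and the paper's displayed chain of strict inequalities needs more care there---your final remark about local modifications ``paying for increases in $|D^-j|$ at neighboring times'' is exactly this phenomenon. One small point worth making explicit: in Case~B with $|D^-j^*(t)|=0$ you invoke $\partial_{aa}H\geq 0$, which is not a stand-alone hypothesis of the lemma but follows from the strict assumption by sending $h_{ab}\to 0^+$ and using continuity of $H$ (potentials in this paper are $C^1$); a one-line justification would tighten the write-up.
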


\begin{proof}
    It suffices to show that if $j^*\in\operatorname{OP}^T(1,0,L)$, then $D^-j \leq 0$. Assume then by contradiction that $t_0 \in \{1,\ldots,T\}$ is such that $D^-j^*(t_0) > 0$.

    We can not have $t_0=T$, otherwise $j^*(T-1) < j^*(T)=0$. Let us now assume without loss of generality that $D^-j^*(t_0+1)\leq 0$. In other words, $t_0$ is a strict local maximum of $j^*$.
    
    For $\e = j^*(t_0)/2>0$, we get that $j := j^* -\e \mathbbm 1_{t_0}$ is an admissible competitor for the obstacle problem, so that
    \begin{align*}
    &L(j^*(t_0),D^-j^*(t_0),t_0) + L(j^*(t_0+1),D^-j^*(t_0+1),t_0+1) \leq\\
    &L(j(t_0),D^-j(t_0),t_0) + L(j(t_0+1),D^-j(t_0+1),t_0+1).
    \end{align*}
    
    By the strict monotonicity of $H$ we get the contradiction
    \begin{align*}
        0 &> L(j(t_0),D^-j(t_0),t_0)-L(j^*(t_0),D^-j^*(t_0),t_0)\\
        &\geq L(j^*(t_0+1),D^-j^*(t_0+1),t_0+1)-L(j(t_0+1),D^-j(t_0+1),t_0+1)\\
        &> 0,
    \end{align*}
    so we conclude the proof.
\end{proof}

If each $L(\cdot,\cdot,t)$ is also differentiable, minimizers of the obstacle problem satisfy the following variational equation\footnote{$D^+ f(t) := f(t+1)-f(t)$}
\[
\min\{j, -D^+[\p_{\d j} L]+\p_jL\} = 0.
\]
If each $L(\cdot, \cdot, t)$ is in addition convex, we also obtain that the minimizers are exactly the solutions of the variational equation.

For example, if $H = \tfrac{1}{2}i_{ab}^2+i_{aa}$, then $L(j,\d j,t)=\tfrac{1}{2}|\d j|^2+j$ and one obtains the classical obstacle problem\footnote{$\D j(t) := D^+D^- j(t) = j(t+1)-2j(t)+j(t-1)$.}
\[
\min\{j,-\D j+1\}=0.
\]

In our case, $L(j,\d j,t) = H(j,|\d j|,t)$ may not be differentiable at $\{\d j=0\}$. However, under the hypotheses of Lemma \ref{lem:obs}, over $\{\d j=0\}$ we only need to consider variations of $L$ towards $\{\d j<0\}$ where
\[
\p_{\d j}^- L(j,0,t) := \lim_{\e\to 0^+}\frac{L(j,-\e,t)-L(j,0,t)}{-\e} = -\p_{ab} H(j,0,t).
\]
Whenever we write $\p_{\d j}L$ we should keep in mind that for $\d j=0$ this derivative is actually the Dini derivative from the left.

In Lemma \ref{lem:obs3} we show the equivalence between the variational equations for the obstacle problem and the constitutive relations for the dynamic Beckmann problem. First we have the following technical result, closely related with Lemma \ref{lem:obs}.

\begin{lemma}\label{lem:obs2}
    Let $L=L(j,\d j,t)\colon\R_{\geq0}\times\R\times\Z_{\geq 1} \to \R$ be such that $L(j,-\d j,t) = L(j,\d j,t)$, the restriction of $L(\cdot,\cdot,t)$ to the (closed) positive quadrant is differentiable, satisfies $\nabla L(\cdot,\cdot,t)\geq 0$, and moreover
    \[
        \p_j L + \p_{\d j}L >0 \text{ in } \{\d j>0\}.
    \]
    Then, for any $j\colon \{0,\ldots,T\}\to \R$ such that
    \[
        \min\{j,-D^+[\p_{\d j}L]+\p_{j}L\}=0, \qquad j(T)=0.
    \]
    we have that $D^-j\leq 0$ in $\{1,\ldots,T\}$.
\end{lemma}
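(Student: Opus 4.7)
The plan is to argue by contradiction. Suppose there exists $t_0\in\{1,\ldots,T\}$ with $D^-j(t_0)>0$. The complementarity condition $\min\{j,\cdot\}=0$ forces $j\geq 0$, so in particular $j(T-1)\geq 0=j(T)$, giving $D^-j(T)\leq 0$. Hence one may take $t_1$ to be the largest index in $\{t_0,\ldots,T-1\}$ for which $D^-j(t_1)>0$, and by maximality $D^-j(t_1+1)\leq 0$, i.e.\ $D^+j(t_1)\leq 0$. So $t_1$ is a discrete interior local maximum of $j$. Moreover $j(t_1)=j(t_1-1)+D^-j(t_1)>0$, so at $t_1$ the complementarity reduces to the Euler--Lagrange equation
\[
D^+\Phi(t_1)=\partial_j L(j(t_1),D^-j(t_1),t_1),\qquad \Phi(t):=\partial_{\delta j}L(j(t),D^-j(t),t).
\]

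Next I would read off the signs of $\Phi$ at $t_1$ and $t_1+1$ from the symmetry of $L$. Since $L(j,-\delta j,t)=L(j,\delta j,t)$, the derivative $\partial_{\delta j}L$ is odd in $\delta j$, and $\nabla L\geq 0$ on the positive quadrant then gives $\partial_{\delta j}L(j,\delta j,t)\geq 0$ when $\delta j\geq 0$ and $\leq 0$ when $\delta j\leq 0$. Applied at $t_1$ and $t_1+1$, this yields $\Phi(t_1)\geq 0\geq \Phi(t_1+1)$, so $D^+\Phi(t_1)\leq 0$. Combined with $\partial_j L\geq 0$ this forces $D^+\Phi(t_1)=0$, and then $\Phi(t_1)=\Phi(t_1+1)=0=\partial_j L(j(t_1),D^-j(t_1),t_1)$. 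Thus at the point $(j(t_1),D^-j(t_1),t_1)$, with $D^-j(t_1)>0$, both $\partial_j L$ and $\partial_{\delta j}L$ vanish, contradicting the strict inequality $\partial_j L+\partial_{\delta j}L>0$ in $\{\delta j>0\}$.

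The main subtlety is the sign bookkeeping for $\partial_{\delta j}L$: this is where the evenness of $L$ in $\delta j$ is indispensable, as it is what converts the one-sided statement ``$\nabla L\geq 0$ on the positive quadrant'' into the symmetric sign rule $\operatorname{sign}\Phi(t)=\operatorname{sign}D^-j(t)$, which is what makes the discrete maximum principle argument close. Once the appropriate local maximum $t_1$ is identified and these signs are in hand, the strict positivity hypothesis $\partial_j L+\partial_{\delta j}L>0$ on $\{\delta j>0\}$ delivers the contradiction immediately.
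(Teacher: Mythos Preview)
Your proof is correct and follows essentially the same approach as the paper: locate an interior local maximum $t_1$ where $D^-j(t_1)>0\geq D^-j(t_1+1)$, use the evenness of $L$ in $\delta j$ to get the sign of $\Phi(t_1+1)\leq 0$, and then combine the Euler--Lagrange identity $\Phi(t_1+1)=\partial_j L+\Phi(t_1)$ with the strict positivity hypothesis to obtain a contradiction. The only cosmetic difference is that the paper reaches the contradiction in one line ($0\geq\Phi(t_1+1)=\partial_jL+\Phi(t_1)>0$), whereas you split it into showing $\partial_jL$ and $\Phi(t_1)$ vanish separately before invoking the strict inequality.
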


\begin{proof}
    Assume by contradiction that $t_0 \in \{1,\ldots,T\}$ is such that $D^-j(t_0) > 0$.

    We can not have $t_0=T$, otherwise $j(T-1) < j(T)=0$. Let us now assume without loss of generality that $D^-j(t_0+1)\leq 0$. By evaluating the variational equation for the obstacle problem at $t=t_0$ we get that (note that $j(t_0)>0$)
    \[
    \p_{\d j} L(j(t_0+1),D^-j(t_0+1)) - \p_{\d j} L(j(t_0),D^-j(t_0)) = \p_j L(j(t_0),D^-j(t_0)).
    \]

    By the odd symmetry and the monotonicity hypotheses on $L$ we get that
    \[
    \p_{\d j} L(j(t_0+1),D^-j(t_0+1),t_0+1) = -\p_{\d j} L(j(t_0+1),-D^-j(t_0+1),t_0+1) \leq 0.
    \]
    This turns out to be a contradiction, because
    \begin{align*}
    0 &\geq \p_{\d j} L(j(t_0+1),D^-j(t_0+1),t_0+1)\\
    &= \p_j L(j(t_0),D^-j(t_0),t_0) + \p_{\d j} L(j(t_0),D^-j(t_0),t_0)\\
    &>0.
    \end{align*}
    So we conclude the proof.
\end{proof}

\begin{lemma}\label{lem:obs3}
    Let $G=(N,E)$ with $N=\{a,b\}$ and $E=\{(a,a),(a,b)\}$. Let
    \[
    H=H(i_{aa},i_{ab},t)\colon\R^{E}_{\geq0}\times\Z_{\geq 1} \to \R
    \]
    be a time dependent potential such that
    \[
    \p_{aa} H + \p_{ab} H > 0 \text{ in } \{i_{ab}>0\},
    \]
    and let $L(j,\d j,t) = H(j,|\d j|,t)$. Given $j\colon \{0,\ldots,T\}\to \R$ and $i \in \R^{E^{T,\W}}$ such that
    \[
    i(a,a,t)=j(t), \quad i(a,a,T)=j(T)=0, \quad i(a,b,t) = i(b,t,\W) = -D^-j(t), \quad i(a,t,\W) = 0.
    \]
    We have that $j$ satisfies
    \[
        \min\{j,-D^+[\p_{\d j}L]+\p_{j}L\}=0
    \]
    if and only if $i$ satisfies the constitutive relations for the dynamic Beckmann problem.
\end{lemma}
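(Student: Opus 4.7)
The plan is to verify the equivalence by expanding the Beckmann constitutive relations into four families of equations indexed by the types of edges in $E^{T,\W}$, and then matching them against the single obstacle variational equation. Writing $a_t:=\p_{aa}H(j(t),-D^-j(t),t)$ and $b_t:=\p_{ab}H(j(t),-D^-j(t),t)$, the CR at the loop $(a,a,t)$ reads $\min\{j(t),a_t-u(a,t)+u(a,t-1)\}=0$, at the transition $(a,b,t)$ it reads $\min\{-D^-j(t),b_t-u(b,t)+u(a,t-1)\}=0$, at the trivial sink edge $(a,t,\W)$ it reduces to $u(a,t)\geq u(a,\W)$, and at $(b,t,\W)$ it reads $\min\{-D^-j(t),u(b,t)-u(b,\W)\}=0$.

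For the direction $(\Rightarrow)$ (VarEq implies CR), I would first invoke Lemma \ref{lem:obs2} to guarantee $D^-j\leq 0$, so that the associated $i$ is a genuine non-negative flow. A multiplier $u$ is then constructed by normalizing $u(b,\W)=0$, setting $u(b,t)=0$ for all $t$, forcing $u(a,t-1)=-b_t$ whenever $-D^-j(t)>0$, and extending $u(a,\cdot)$ via the telescopic recursion $u(a,t)=u(a,t-1)+a_t$ on the set $\{j>0\}$. The VarEq identity $a_t+b_{t+1}-b_t=0$ where $j(t)>0$ is precisely what ensures that these two prescriptions for $u(a,\cdot)$ agree. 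On the complementary inactive sets the VarEq inequality $a_t+b_{t+1}-b_t\geq 0$ translates directly into the required CR inequalities, and finally $u(a,\W)$ is taken as any value below $\min_t u(a,t)$ to satisfy the CR at $(a,t,\W)$.

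For the direction $(\Leftarrow)$ (CR implies VarEq), I would assume $(i,u)$ satisfies the four CR above and examine $\min\{j(t),a_t+b_{t+1}-b_t\}$ case by case based on the positivity of $j(t)$, $-D^-j(t)$, and $-D^-j(t+1)$. In the principal case $j(t)>0$ with $-D^-j$ positive at both $t$ and $t+1$, all CR at the relevant edges collapse to equalities and, together with $u(b,t)=u(b,t+1)=u(b,\W)$ forced by the $(b,\cdot,\W)$ CR, they telescope to
\[
a_t+b_{t+1}-b_t=(u(a,t)-u(a,t-1))+(u(b,\W)-u(a,t))-(u(b,\W)-u(a,t-1))=0.
\]
When $j(t)=0$ the required non-negativity of $a_t+b_{t+1}-b_t$ follows directly from the CR at $(a,a,t)$. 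The degenerate situations where $-D^-j$ vanishes at $t$ or $t+1$ but $j(t)>0$ are treated by combining the remaining one-sided inequalities with the hypothesis $\p_{aa}H+\p_{ab}H>0$ on $\{i_{ab}>0\}$.

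The main technical obstacle I anticipate is precisely the interface between the active set $\{-D^-j>0\}$ and the idle set $\{-D^-j=0\}$: both directions require turning one-sided CR inequalities into VarEq equalities, or vice versa. This is where the assumption $\p_{aa}H+\p_{ab}H>0$ on $\{i_{ab}>0\}$ is essential, and the delicate step will be to show that it suffices to propagate equality correctly across this interface.
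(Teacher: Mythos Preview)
Your plan matches the paper's proof: Lemma~\ref{lem:obs2} for $D^-j\le0$, then an explicit multiplier with $u(b,\cdot)$ constant for the direction obstacle $\Rightarrow$ CR. One organizational difference is worth noting: rather than your two piecewise prescriptions for $u(a,\cdot)$ (one on $\{-D^-j>0\}$, one on $\{j>0\}$), the paper sets $\lambda(t)=\xi(a,b,t+1)-\xi(a,b,t)+\xi(a,a,t)$ and runs the single recursion
\[
u(a,t+1)=u(a,t)+\xi(a,a,t+1)-\lambda(t+1),\qquad u(b,t+1)=u(a,t)+\xi(a,b,t+1)
\]
for \emph{every} $t$. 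This defines $u(a,\cdot)$ on the inactive tail $\{j=0\}$ as well (a region your rules leave unspecified), makes the CR at $(a,a,t)$ literally $\min\{j,\lambda\}=0$, and relegates the obstacle identity to the single task of checking a posteriori that $u(b,\cdot)$ is constant. For the reverse direction the paper is just as terse as you are; both sketches implicitly need the CR at $(a,b,t)$, $(a,b,t+1)$ and the two $\W$-edges in addition to the one at $(a,a,t)$, so your phrase ``follows directly from the CR at $(a,a,t)$'' when $j(t)=0$ is a slight overstatement.
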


\begin{proof}
    By using the given relations between $H$ and $L$, as well as the relation between $i$ and $j$, we verify that the equations for the obstacle problem follow from the constitutive relations over the edges of the form $(a,a,t)$. Let us then assume that $j$ satisfies the equations for the obstacle problem. Our goal is to see that $i$ satisfies the constitutive relations by constructing an appropriated multiplier.
    
    To ease the notation let us denote $\xi = \nabla H^T(i)$ and
    \[
    \l(t) = \xi(a,b,t+1)-\xi(a,b,t)+\xi(a,a,t)\geq0,
    \]
    such that $\min\{j,\l\}=0$.
    
    Starting from $u(a,0)=0$, define $u\colon N^{T,\W}\to\R$ such that
    \[
    \begin{cases}
        u(a,t+1) = u(a,t) + \xi(a,a,t+1) - \l(t+1),\\
        u(b,t+1) = u(a,t) + \xi(a,b,t+1),\\
        u(b,0) = u(b,1),\\
        u(b,\W) = u(b,t),\\
        u(a,\W) = \min\{u(a,t) \ | \ t\in\{0,\ldots,T\}\}.
    \end{cases}
    \]
    To see that this function is well defined at $(b,\W)$ we need to check that $u(b,t)$ is independent of $t$. This follows because for $t\geq 1$
    \begin{align*}
        u(b,t+1) &= u(a,t) + \xi(a,b,t+1)\\
        &= u(a,t) +\xi(a,b,t) - \xi(a,a,t)+\l(t)\\
        &= u(a,t-1)+\xi(a,b,t)\\
        &= u(b,t).
    \end{align*}
    
    By Lemma \ref{lem:obs2} and the relation between $i$ and $j$, we get that $i\geq 0$ over every edge of $G^{T,\W}$. From the construction of $u$ we obtain that $\nabla H^T \geq Du$ also over every edge of $G^{T,\W}$. Moreover, we have the equality $\nabla H^T = Du$ over edges of the form $(a,b,t)$, $(b,t,\W)$, and $(a,t,\W)$. To conclude we notice that over edges of the form $(a,a,t)$ we have that if $i(a,a,t)=j(t)>0$, then from the obstacle equation $\l(t)=0$ and
    \[
    \nabla H^T(i(a,a,t),i(a,b,t),t) = \xi(a,a,t) = u(a,t)-u(a,t-1) = Du(a,a,t).
    \]
    So we also have over these edges that $\min\{i,\nabla H^T-Du\}=0$.
\end{proof}

Given a solution $j\colon\{0,\ldots,T\}\to\R$ such that
\[
    \min\{j,-D^+[\p_{\d j}L]+\p_{j}L\}=0, \qquad j(T)=0,
\]
we have that the extension by zero given by $J = j\mathbbm 1_{\{t\leq T\}}$, solves similar equations over $\{1,\ldots, T\}$ if and only if
\[
-\p_{\d j} L(0,0,T+1) \geq \p_{j}L(0,-D^-j(T),T) - \p_{\d j}L(0,-D^-j(T),T).
\]

Due to the equivalence given by Lemma \ref{lem:obs3}, we obtain the following result stating the possibility of extending by zero a solution of the dynamic Beckmann equations. Notice that, when we apply the construction of the multiplier $u$ in the proof of Lemma \ref{lem:obs3}, to the Corollary \ref{cor:5}, we get exactly the same criteria.

\begin{corollary}
    Under the assumptions of Lemma \ref{lem:obs3}, we have that $I = i\mathbbm 1_{E^T}$ is a solution to the dynamic Beckmann equations in $G^{T+1}$ if and only if
    \[
    \p_{ab} H(0,0,T+1) \geq \p_{ab}H(0,i(a,b,T),T) - \p_{aa}H(0,i(a,b,T),T).
    \]
\end{corollary}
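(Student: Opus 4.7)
The plan is to reduce the corollary to the obstacle-problem extension criterion stated immediately above it, via Lemma \ref{lem:obs3}. Applying Lemma \ref{lem:obs3} with time horizon $T+1$, the extension $I = i\mathbbm 1_{E^T}$ is a solution to the dynamic Beckmann constitutive relations on $G^{T+1}$ if and only if the associated sequence $J = j\mathbbm 1_{\{t\leq T\}}$ on $\{0,\ldots,T+1\}$, with $J(T+1) = 0$, satisfies the obstacle variational equation $\min\{J, -D^+[\p_{\d j}L]+\p_j L\}=0$ on $\{1,\ldots,T\}$. Since the original $j$ already satisfies this equation at times $t\in\{1,\ldots,T-1\}$, and $J$ coincides with $j$ on $\{0,\ldots,T\}$, the only new constraint is the variational inequality at the new boundary time $t=T$. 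Because $J(T)=0$, it reduces to the obstacle-problem extension criterion
\[
-\p_{\d j} L(0,0,T+1) + \p_j L(0, D^-j(T), T) + \p_{\d j} L(0, D^-j(T), T) \geq 0.
\]

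The next step is to translate this into $H$-variables using $L(j,\d j,t) = H(j,|\d j|,t)$ and the identity $i(a,b,T) = -D^-j(T)\geq 0$. The derivative in $j$ is unambiguous: $\p_j L(0, D^-j(T), T) = \p_{aa}H(0, i(a,b,T), T)$. For the derivative in $\d j$, at $\d j = 0$ I use the Dini derivative from the left from the preceding discussion, yielding $\p_{\d j}L(0,0,T+1) = -\p_{ab}H(0,0,T+1)$; at $\d j = D^-j(T) < 0$ the chain rule $\p_{\d j}L = \operatorname{sign}(\d j)\,\p_{ab} H(j,|\d j|,t)$ also yields $\p_{\d j}L(0, D^-j(T), T) = -\p_{ab}H(0, i(a,b,T), T)$. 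When $D^-j(T)=0$ the same value is obtained via the left Dini convention, so both subcases collapse to the same expression. Substituting, the inequality becomes
\[
\p_{ab}H(0,0,T+1) + \p_{aa}H(0,i(a,b,T),T) - \p_{ab}H(0,i(a,b,T),T) \geq 0,
\]
which rearranges to the stated condition.

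The main technical nuance is the bookkeeping of the Dini and chain-rule signs for $\p_{\d j}L$ in the two regimes $\d j<0$ and $\d j=0$; the key point to verify is that both yield the same signed contribution $-\p_{ab}H$, so the criterion takes the clean form displayed in the corollary uniformly in $i(a,b,T)\geq 0$. As a sanity check, following the alternative route suggested by the authors and plugging the multiplier $u$ constructed in the proof of Lemma \ref{lem:obs3} into the sufficient condition of Corollary \ref{cor:5}, one computes $u(b,\W)-u(a,T)=\xi(a,b,T)-\xi(a,a,T)=\p_{ab}H(0,i(a,b,T),T)-\p_{aa}H(0,i(a,b,T),T)$, which recovers exactly the same inequality and confirms the translation above.
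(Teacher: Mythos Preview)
Your proof is correct and follows essentially the same route as the paper: you invoke Lemma~\ref{lem:obs3} to pass to the obstacle formulation, identify the single new constraint at $t=T$ for the zero extension $J$, and then translate the $L$-derivatives back to $H$-derivatives using $L(j,\delta j,t)=H(j,|\delta j|,t)$ and $i(a,b,T)=-D^-j(T)$, with the left Dini convention at $\delta j=0$. Your sanity check via the multiplier from the proof of Lemma~\ref{lem:obs3} together with Corollary~\ref{cor:5} is also exactly the alternative verification the paper alludes to in the sentence preceding the corollary.
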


\subsubsection{Quadratic potential} 

Consider for $\a>0$ and $\b,\e\geq0$ the stationary potential
\[
H_{\a,\b,\e}(i) := \frac{\a}{2} i_{ab}^2 + \frac{\b}{2} i_{aa}^2 + \e i_{aa}.
\]
For $T\geq 1$, we define then the extended potential $H^T_{\a,\b,\e}\colon \R^{E^T}_{\geq0}\to\R$ such that
\[
H^T_{\a,\b,\e}(i) := \sum_{t=1}^T H_{\a,\b,\e}(i(t)).
\]
Notice that $H_{\a,\b,\e}$ is strictly increasing because $\a>0$. The case $\a=0$ is a trivial one, a solution would be to send all the mass through the edge $(a,b)$ at the first opportunity and without cost.

We could have considered as well a linear term in $i_{ab}$. However, for the given divergence constrain set by \eqref{eq:10}, this terms ends up adding to a constant term in the extended potential $H^T$.

We also notice that it suffices to analyze the problem for $\a=1$ to understand all the other possible cases. This due to the identity $H_{\a,\b,\e} = \a H_{1,\b/\a,\e/\a}$.

Assume from now on $\a=1$. By Lemma \ref{lem:obs}, we can compute $j(t)=i(a,a,t)$ from the obstacle problem
\[
\begin{cases}
    \min\{j,-\D j + \b j + \e\} = 0 \text{ in } \{1,\ldots,(T-1)\},\\
    j(0)=1,\\
    j(T)=0.
\end{cases}
\]

Let $\r := 1+\b/2$ and $r := \r + \sqrt{\r^2-1}\geq 1$ be the largest root of the characteristic polynomial $x^2-2\r x+1$. Note that the other root is $r^{-1}\leq 1$, and we have a double root if and only if $\b=0$. For $T\geq 1$, we get that the solution of the free problem
\[
\begin{cases}
\D J - \b J - \e = 0 \text{ in } \{1,\ldots,(T-1)\},\\
J(0)=1,\\
J(T)=0,
\end{cases}
\]
is given by
\begin{align}\label{eq:J}
J_T(t) &:= \begin{cases}
    \11+\frac{\e}{\b}\2\frac{r^{T-t}-r^{t-T}}{r^T-r^{-T}} + \frac{\e}{\b}\frac{r^t-r^{-t}}{r^T-r^{-T}} - \frac{\e}{\b} \text{ if } \b>0,\\
    1 - \11+\e\frac{T^2}{2}\2\frac{t}{T} + \e\frac{t^2}{2} \text{ if } \b=0.
\end{cases}
\end{align}

By the maximum principle, or by just checking the corresponding signs by hand, we can verify that $J_T$ also satisfies the obstacle problem if $\e=0$. 

Let us now focus on the case $\e>0$. Keep in mind that, due to the Theorem \ref{thm:fin_prop2}, we know that any solution of the dynamic Beckmann problem has a finite support, independent of $T$.

\begin{lemma}\label{lem:6}
    Let $\e>0$ and $J_T$ as in \eqref{eq:J}. Then there exists $T_0 = T_0(\b,\e)$ such that $J_T(t) \geq 0$ for all $t\in\{0,\ldots,T\}$ if and only if $T\leq T_0$.
\end{lemma}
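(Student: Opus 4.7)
The plan is to reduce the pointwise condition $J_T\geq 0$ on $\{0,\ldots,T\}$ to the single inequality $J_T(T-1)\geq 0$, and then to show that the latter defines an initial segment of integers $T$ by reading off monotonicity in $T$ from the explicit formula in \eqref{eq:J}.

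For the reduction, I would rewrite the recurrence $\D J_T=\b J_T+\e$ as the backward step
\[
J_T(t-1)=(2+\b)J_T(t)-J_T(t+1)+\e, \quad t\in\{1,\ldots,T-1\},
\]
and start it from the two terminal values $J_T(T)=0$ and $J_T(T-1)=v\geq 0$. By downward induction on $k\in\{1,\ldots,T-1\}$ I would prove jointly that $J_T(T-k)\geq 0$ and that $J_T(T-k)\geq J_T(T-k+1)$. The essential inductive step is
\[
J_T(T-k-1)-J_T(T-k)=(1+\b)J_T(T-k)-J_T(T-k+1)+\e\geq \b J_T(T-k)+\e>0,
\]
where the inductive monotonicity $J_T(T-k)\geq J_T(T-k+1)$ is used in the middle inequality. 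Together with $J_T(0)=1$, this yields $J_T\geq 0$ on all of $\{0,\ldots,T\}$; the reverse implication is immediate.

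Next, I would track $J_T(T-1)$ as a function of $T$. When $\b=0$, substituting $t=T-1$ into \eqref{eq:J} and simplifying gives $J_T(T-1)=(2-\e T(T-1))/(2T)$, which is non-negative exactly when $T(T-1)\leq 2/\e$; since $T\mapsto T(T-1)$ is strictly increasing on integers $T\geq 1$, this defines an initial segment. When $\b>0$, writing $J_T(t)=A_Tr^t+B_Tr^{-t}-\e/\b$ with $A_T,B_T$ fixed by the boundary conditions and pulling out the common factor $(r-1)$ should yield
\[
J_T(T-1)=\frac{(r-1)\bigl[(1+\e/\b)(1+r^{-1})-(\e/\b)(r^{T-1}+r^{-T})\bigr]}{r^T-r^{-T}},
\]
so that $J_T(T-1)\geq 0$ becomes $(\e/\b)(r^{T-1}+r^{-T})\leq (1+\e/\b)(1+r^{-1})$. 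The left-hand side has continuous derivative $(\e/\b)(\ln r)(r^{T-1}-r^{-T})$ with respect to $T$, which is non-negative for $T\geq 1/2$, so the left-hand side is strictly increasing in $T\geq 1$; the right-hand side is constant, and at $T=1$ the inequality is strict. Hence the set of valid $T$ is again an initial segment, and taking $T_0(\b,\e)$ to be its largest element concludes the proof.

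The main obstacle I anticipate is the algebra for $\b>0$: one must collect terms carefully so that the factor $(r-1)$ cancels through the numerator and the threshold inequality takes the clean monotone form above. By contrast, the backward induction is routine once one settles on the two-fold inductive invariant (sign together with monotone ordering), and the $\b=0$ case is direct substitution.
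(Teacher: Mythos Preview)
Your proof is correct and arrives at the same threshold criterion $J_T(T-1)\geq 0$ as the paper, but the two arguments diverge in how they handle both the reduction and the monotonicity in $T$. The paper never manipulates the closed-form \eqref{eq:J} beyond computing the limit $\lim_{T\to\infty}J_T(T-1)$; instead it defines $T_0=\min\{T\geq 1: J_{T+1}(T)<0\}$ and propagates the inequality $J_{T+1}(T)<0\Rightarrow J_{T+2}(T+1)<0$ by a comparison-principle argument (building a super-solution from $J_{T+1}$ and invoking the strong maximum principle), while the positivity for $T\leq T_0$ is also obtained via the maximum principle on a subinterval. You instead run an explicit backward recursion with the joint invariant ``non-negative and non-increasing'' for the reduction, and then read the monotonicity of $T\mapsto J_T(T-1)$ directly off the formula after factoring out $(r-1)$. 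Your route is more elementary and self-contained (no appeal to Lemma~\ref{lem:mp}), at the price of the algebra you flag; the paper's route is more robust in that it would survive replacing the quadratic potential by something without a closed-form solution. One small point: to guarantee that your initial segment has a \emph{largest} element you should state explicitly that the left-hand side $(\e/\b)(r^{T-1}+r^{-T})$ (resp.\ $T(T-1)$ when $\b=0$) tends to $+\infty$, so the threshold is eventually violated; this is immediate from $r>1$ but is worth one clause.
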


\begin{proof}
    Let 
    \[
    T_0 := \min\{T\geq 1\ | \ J_{T+1}(T)<0\}
    \]
    We will show that $T_0$ is well defined and $J_T(t) \geq 0$ for all $t\in\{1,\ldots,T\}$ if and only if $T\leq T_0$.
    
    We can verify by a computation that
    \[
    \lim_{T\to\8} J_T(T-1) = \begin{cases}
    (\e/\b)(r^{-1}-1) \text{ if } \b>0,\\
    -\8 \text{ if }\b=0.
    \end{cases}
    \]
    Given that $r>1$ if $\b>0$, we get that for $T$ sufficiently large $J_T(T-1) < 0$, hence $T_0$ is a well defined finite number.

    Let us show by induction that $J_{T+1}(T)<0$ for any $T\geq T_0$. Assume that $T$ is such that $J_{T+1}(T)<0$, let us show then that also $J_{T+2}(T+1)<0$. Let $J:[0,T+2]\cap \Z\to \R$ such that $J(t) = J_{T+1}(t)$ for $t\leq T+1$, and $J(T+2)=0$. Then
    \[
    \D J - \b J - \e \leq 0 \text{ in } \{1,\ldots,T+1\}.
    \]
    Indeed, the equality holds for $t\in\{1,\ldots,T\}$ by construction. For $t=T+1$ we have that
    \[
    \D J(T+1) - \b J(T+1) - \e = J_{T+1}(T) - \e < 0.
    \]
    By the maximum principle (Lemma \ref{lem:mp}) we then have that $J \geq J_{T+2}$ in $\{1,\ldots,(T+1)\}$. Moreover, the inequality has to be strict at $t=T+1$, otherwise we get that $J_{T+1}=J=J_{T+2}$ in $\{1,\ldots,(T+1)\}$, due to the strong maximum principle. This contradiction implies that $0 = J(T+1) > J_{T+2}(T+1)$.

    Let us now show that for any $T\leq T_0$ one has that $J_T \geq 0$ in $\{1,\ldots,T\}$. We already know by construction that $J_T(T-1) \geq 0$. If we assume by contradiction that $J_T(t_0)<0$ for some $t_0 \in \{1,\ldots,T-2\}$, we would get a contradiction with the maximum principle over the interval $\{t_0,\ldots,T\}$.
\end{proof}


\begin{corollary}
    Let $\e>0$, $J_T$ as in \eqref{eq:J}, and $T_0$ as in Lemma \ref{lem:6}. Then the solution $i \in \operatorname{BP}^T(G,\mu,\nu,H^T_{1,\b,\e})$ is given by
    \begin{align*}
    i(a,a,t) &= J_{\min\{T,T_0\}}(t)\mathbbm 1_{\{t\leq \min\{T,T_0\}\}},\qquad    i(a,b,t) = i(a,a,t-1) - i(a,a,t).
    \end{align*}
\end{corollary}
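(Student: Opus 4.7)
The plan is to reduce to the obstacle problem $\operatorname{OP}^T(1,0,L)$ via Lemmas \ref{lem:obs} and \ref{lem:obs3}, with $L(j,\delta j,t) = \tfrac12|\delta j|^2 + \tfrac\beta2 j^2 + \epsilon j$, and then verify that the claimed function $j^* := J_{\min\{T,T_0\}}\mathbbm{1}_{\{t \leq \min\{T,T_0\}\}}$ is the unique solution of the corresponding variational inequality
\[
j^* \geq 0, \qquad \min\{j^*, -\Delta j^* + \beta j^* + \epsilon\} = 0 \text{ on } \{1,\ldots,T-1\}, \qquad j^*(0)=1, \quad j^*(T)=0.
\]
Uniqueness of the obstacle-problem minimizer follows from strict convexity of the associated functional on functions with fixed endpoint values, so the KKT conditions above characterize $j^*$ completely.

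First I would check the hypotheses of Lemmas \ref{lem:obs} and \ref{lem:obs3}: the increment satisfies $H_{1,\beta,\epsilon}(i+h) - H_{1,\beta,\epsilon}(i) \geq h_{ab} i_{ab} + \tfrac12 h_{ab}^2 > 0$ whenever $h_{ab} > 0$, and $\partial_{aa} H + \partial_{ab} H = \beta i_{aa} + \epsilon + i_{ab} \geq \epsilon > 0$ on $\{i_{ab} > 0\}$ because $\epsilon > 0$. When $T \leq T_0$, the candidate $j^* = J_T$ satisfies the free equation $-\Delta J_T + \beta J_T + \epsilon = 0$ at every interior point by definition, and $J_T \geq 0$ by Lemma \ref{lem:6}; this gives the variational inequality immediately. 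When $T > T_0$, the candidate is $J_{T_0}$ truncated by zero at $t = T_0$; on $\{1,\ldots,T_0-1\}$ the free equation is inherited from $J_{T_0}$, on $\{T_0+1,\ldots,T-1\}$ one has $\Delta j^* = j^* = 0$ so $-\Delta j^* + \beta j^* + \epsilon = \epsilon > 0$, and both the boundary values and non-negativity are clear.

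The essential step is the verification of the sign condition at $t = T_0$, where $j^*(T_0) = 0$ forces
\[
\epsilon - J_{T_0}(T_0-1) \geq 0.
\]
To prove this I would compare $J_{T_0+1}$ with the zero-extension $\tilde J$ of $J_{T_0}$ to $\{0, \ldots, T_0+1\}$, that is $\tilde J(t) := J_{T_0}(t)$ for $t \leq T_0$ and $\tilde J(T_0+1) := 0$. Both functions share the boundary values $1$ at $0$ and $0$ at $T_0+1$, so $D := J_{T_0+1} - \tilde J$ vanishes at the endpoints. A direct computation gives $(\Delta - \beta) D = 0$ on $\{1, \ldots, T_0-1\}$ and $(\Delta - \beta) D(T_0) = \epsilon - J_{T_0}(T_0-1)$, using $\tilde J(T_0) = \tilde J(T_0+1) = 0$. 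Arguing by contradiction, if this quantity were negative then $(-\Delta + \beta) D \geq 0$ on the whole interior $\{1, \ldots, T_0\}$ together with the zero boundary data would force $D \geq 0$ everywhere, by the discrete maximum principle for the operator $-\Delta + \beta$ (valid for $\beta \geq 0$); but $D(T_0) = J_{T_0+1}(T_0) < 0$ by the very definition of $T_0$, a contradiction.

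The remaining formulas for the edge-flow components follow from the reduction at the start of Section \ref{sec:dyn_ex0}: the divergence constraints determine $i(a,b,t) = i(b,t,\Omega) = j^*(t-1) - j^*(t)$ and $i(a,t,\Omega) = 0$, and the inequality $i(a,b,t) \geq 0$ holds because $j^*$ is non-increasing (the function $J_{T_0}$ is convex on $\{0,\ldots,T_0\}$ with $J_{T_0} \geq 0$ and $J_{T_0}(T_0) = 0$, so it is non-increasing, and the extension by zero preserves this). The hard part is the maximum-principle argument yielding $J_{T_0}(T_0-1) \leq \epsilon$: this is the quantitative content of the free-boundary condition at $t = T_0$, and it is what turns the qualitative definition of $T_0$ into an explicit formula for the solution.
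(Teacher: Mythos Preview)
Your proof is correct. The paper states this corollary without proof, treating it as a consequence of the preceding framework (Lemmas \ref{lem:obs}, \ref{lem:obs3}, \ref{lem:6}); you have supplied the one genuinely nontrivial step that the paper leaves implicit, namely the free-boundary inequality $J_{T_0}(T_0-1)\le \e$ at the contact point $t=T_0$. Your comparison argument between $J_{T_0+1}$ and the zero-extension $\tilde J$ of $J_{T_0}$, reducing to the discrete maximum principle (Lemma \ref{lem:mp}) and the defining inequality $J_{T_0+1}(T_0)<0$, is exactly in the spirit of the paper's proof of Lemma \ref{lem:6} and is the natural way to close this gap.
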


Another property that can be deduced from the maximum principle is the monotonicity of $T_0$ with respect to $\e$ and $\b$. Let us denote $L_{\b,\e}(j,\d j) := H_{1,\b,\e}(j,-\d j)$. For any $0\leq \b_1\leq\b_2$, $0\leq \e_1\leq \e_2$, we have that if $j \in \operatorname{OP}^T(1,0,L_{\b_1,\e_1})$, then $j$ is also a super-solution of the obstacle problem with respect to the parameters $\b_2$ and $\e_2$
\[
\min\{j,-\D j + \b_1 j + \e_1\}=0 \qquad\Rightarrow\qquad \min\{j,-\D j + \b_2 j + \e_2\} \geq 0.
\]
The solution of the obstacle problem with respect to $\b_2$ and $\e_2$, and with the same boundary values as $j$, must therefore be smaller than or equal to $j$. In consequence,
\[
T_0(\b_2,\e_2) \leq T_0(\b_1,\e_1).
\]

In order to further analyze the value $T_0$ let us consider the function
\[
f_\b(x) := \frac{r^{x+1}-r^{-(x+1)}-r^x+r^{-x}}{r-r^{-1}}.
\]
It arises when we solve for $T$ in $J_{T+1}(T)=0$ such that
\[
T_0(\b,\e) = \lfloor f_\b^{-1}(1+\b/\e)\rfloor +1.
\]


For $x\gg 1$ we get that $f_\b(x) \sim Cr^x$. Then for $\e\ll \b$ we get that
\[
T_0(\b,\e) \sim \log_r(1+\b/\e).
\]

\subsection{The intersection of two roads}\label{sec:dyn_ex2}

Consider the graph $G = (N,E)$ with $N=N_1\cup N_2$, $E=E_1\cup E_2$, such that for $k\in \{1,2\}$
\[
N_k := \{a_k,b_k\}, \qquad E_k := \{(a_k,a_k),(a_k,b_k)\}.
\]
This graph was illustrated in the Figure \ref{fig:intersection}.

We would like to send $m_1 \in[0,1]$ units of mass from $a_1$ to $b_1$, and $m_2:=1-m_1$ units of mass from $a_2$ to $b_2$. So we fix $\Gamma = \Pi(\mu,\nu)$ with
\[
\m(a_k) = \nu(b_k) = m_k, \qquad \mu(b_k) = \nu(a_k) = 0.
\]
Even thought the graph has two disjoint components, the interesting feature of this model is that the cost should reflect some interaction between the edges $(a_1,b_1)$ and $(a_2,b_2)$.

As in the previous section, we obtain that under the divergence constrains, the function given by
\[
j_k(t) := i(a_k,a_k,t)
\]
determines the rest of the edge flow together with the boundary conditions $j_k(0)=m_k$ and $j_k(T)=0$. As before, we set for $t\geq 1$
\[
i(a_k,b_k,t) = i(b_k,t,\W) = -D^-j_k(t), \qquad i(a_k,t,\W)=0.
\]

Let $H = H(i_{a_1a_1},i_{a_2a_2},i_{a_1b_1},i_{a_2b_2})\colon \R^E_{\geq0}\to \R$ be a stationary potential and let $H^T(i) = \sum_{t=1}^T H(i(t))$. We get that the Beckmann problem can then be written in terms of $j_1$ and $j_2$ as
\begin{align}
    \label{eq:opt_2roads}
    \min\3\sum_{t=1}^T H(j_1(t),j_2(t),-D^-j_1(t),-D^-j_2(t)) \ | \ D^\pm j_k\leq 0, j_k(0)=m_k, j_k(T)=0\4.
\end{align}
Notice that the restriction $j_k\geq0$ is a consequence of $j_k(T)=0$ and $D^+ j_k\leq 0$.

In order to present the variational equation in this case, we let
\[
L(j_1,j_2,D^-j_1,D^-j_2) := H(j_1,j_2,-D^-j_1,-D^-j_2).
\]
We will also use multipliers defined over the half integers in the interval $[0,T]$. In other words, $\a \colon \{1/2,3/2,\ldots,T-1/2\}\to \R$. We denote gradient of such function at $t\in\{1,\ldots,(T-1)\}$ as
\[
D\a(t) := \a(t+1/2) - \a(t-1/2).
\]
Similarly, for $j\colon \{0,\ldots,T\}$ we denote
\[
Dj(t+1/2) := D^-j(t+1) = D^+j(t) = j(t+1)-j(t).
\]

The optimization problem in \eqref{eq:opt_2roads} then leads to the boundary value problem
\begin{align}\label{eq:eq_2roads}
\begin{cases}
    -D^+[\p_{\d j_k}L]+\p_{j_k}L = D\a_k \text{ for } t\in\{1,\ldots,(T-1)\},\\
    \min\{-Dj_k,\a_k\}=0 \text{ for } t\in\{1/2,\ldots,(T-1/2)\},\\
    j_k(0) = m_k\mathbbm 1_0 \text{ for } t\in\{0,T\}.
\end{cases}
\end{align}
Indeed, the Lagrangian is given by
\begin{align*}
&L^T(j_1,j_2;u_1,u_2)\\
&:= \sum_{t=1}^T L(j_1(t),j_2(t),D^-j_1(t),D^-j_2(t)) + \a_1 \cdot Dj_1 + \a_2 \cdot Dj_2\\
&= \sum_{t=1}^T L(j_1(t),j_2(t),D^-j_1(t),D^-j_2(t)) - D\a_1 \cdot j_1 - D\a_2 \cdot j_2 + C
\end{align*}
The term $C$ depends on $\a_1$, $\a_2$, and the boundary values, and is independent of $j_1(t)$ and $j_2(t)$ for $t\in\{1,\ldots,(T-1)\}$. Then the Karush-Kuhn-Tucker conditions give us the equations in \eqref{eq:eq_2roads}. Notice that the first set of equations in \eqref{eq:eq_2roads} give $2(T-1)$ scalar equations, meanwhile the second one give $2T$ scalar equations.

To illustrate a concrete example, we now assume that $H$ follows this structure
\[
H = H_1(i_{a_1a_1},i_{a_1b_1}) + H_2(i_{a_2a_2},i_{a_2b_2}) + I(i_{a_1b_1},i_{a_2b_2}).
\]
In this form, the interaction between the two components of $G$ is restricted to the edges $i_{a_1b_1}$ and $i_{a_2b_2}$. In the following section, we further assume that $H$ is a quadratic function, enabling us to perform numerical computations.

\subsubsection{Quadratic potential}

Given $\e,\gamma\geq 0$, consider the following expression for $H$
\[
H(i_{a_1a_1},i_{a_2a_2},i_{a_1b_1},i_{a_2b_2}) = \1\frac{1}{2}i_{a_1b_1}^2 + \e i_{a_1a_1}\2 + \1\frac{1}{2}i_{a_2b_2}^2 + \e i_{a_2a_2}\2 + \gamma i_{a_1b_1}i_{a_2b_2}.
\]
This formulation implies that the cost of keeping mass on $a_k$ is given by $\e$. The cost of sending $i$ units of mass across the edge $a_kb_k$ is $i+\gamma i'$, where $i'$ represents the amount of mass being transferred over the crossing edge.

In terms of $j_1$ and $j_2$
\[
L(j_1(t),j_2(t),D^-j_1(t),D^-j_1(t)) = \1\frac{1}{2}(D^-j_1)^2 + \e j_1\2 + \1\frac{1}{2}(D^-j_2)^2 + \e j_2\2 + \gamma D^-j_1D^-j_2.
\]
As usual
\[
L^T(j_1,j_2) = \sum_{t=1}^T L(j_1(t),j_2(t),D^-j_1(t),D^-j_1(t)).
\]

The corresponding equations take then the form
\begin{align}\label{eq:eq_2roads2}
\begin{cases}
    (-\D)j_1 + \gamma(-\D)j_2 + \e j_1 = D\a_1 \text{ for } t\in\{1,\ldots,(T-1)\},\\
    (-\D)j_2 + \gamma(-\D)j_1 + \e j_2 = D\a_2 \text{ for } t\in\{1,\ldots,(T-1)\},\\
    \min\{-Dj_k,\a_k\}=0 \text{ for } t\in\{1/2,\ldots,(T-1/2)\},\\
    j_k(0) = m_k\mathbbm 1_0 \text{ for } t\in\{0,T\}.
\end{cases}
\end{align}

By Corollary \ref{cor:beckeq_to_wareq}, any solution of \eqref{eq:eq_2roads2} allows for the construction of an efficient Wardrop equilibrium over $G^{T,\W}$ for the cost given by $g=\nabla H^T$ and the transport plan
\[
\Pi(\mu,\nu)^{T,\W} = \{m_1\mathbbm 1_{(a_1,0),(b_1,\W)}+m_2\mathbbm 1_{(a_2,0),(b_2,\W)}\}.
\]
If $(j_1^*,j_2^*)$ satisfies \eqref{eq:eq_2roads2}, then $j_1^*$ minimizes $j_1\mapsto L^T(j_1,j_2^*)$ under the given constrains for $j_1$. This holds because, after fixing $j_2=j_2^*$, the functional becomes strictly convex in the variable $j_1$. A similar statement applies to $j_2^*$ with respect to the functional $j_2\mapsto L^T(j_1^*,j_2)$.

The problem is strictly convex in both variables if and only if $\gamma \in (0,1)$. This can be checked for example by computing the Hessian of $L$ or just completing the square
\[
\frac{1}{2}(D^-j_1)^2 + \frac{1}{2}(D^-j_2)^2 + \gamma D^-j_1D^-j_2 = \frac{1}{2}(D^-j_1+\gamma D^-j_2)^2 + \frac{1}{2}(1-\gamma^2)(D^-j_2)^2.
\]
Recall that in this case, the solution of \eqref{eq:eq_2roads2} is the unique minimizer of the dynamic Beckmann problem.

We use an iterative method to compute a solution of \eqref{eq:eq_2roads2}. Given $j_k^\pm \in \R$ for $k\in\{1,2\}$, let
\begin{align*}
&E(j_1,j_2) := K_1(j_1) + K_2(j_2) + I(j_1,j_2),\\
&K_k(j) := \frac{1}{2}(j-j_k^-)^2 + \frac{1}{2}(j-j_k^+)^2 + \e j,\\
&I(j_1,j_2) := \gamma(j_1-j_1^-)(j_2-j_2^-) + \gamma(j_1-j_1^+)(j_2-j_2^+).
\end{align*}
Let us assume that at any given step we have that $D^\pm j_k \leq 0$ and consider some $t_0 \in\{1,\ldots,(T-1)\}$. We update the values of $j_1(t_0)$ and $j_2(t_0)$ such that for $j_k^\pm := j_k(t_0\pm 1)$
\[
(j_1(t_0),j_2(t_0)) \in \argmin\{ E(j_1,j_2) \ | \ (j_1,j_2) \in [j_1^+,j_1^-]\times [j_2^+,j_2^-]\}.
\]
The constrains in this problem ensure that at subsequent steps we still have $D^\pm j_k \leq 0$. In the following lemma, we demonstrate that, once $t_0$ is chosen, this step is uniquely defined under certain assumptions.

\begin{lemma}\label{lem:2roads}
    Let $j_k^+\leq j_k^-$ for $k\in\{1,2\}$ such that $\ell_1 := (j_1^--j_1^+) < \ell_2 := (j_2^--j_2^+)$. For $\gamma \in [0,1)\cup(1,\8)$ and $\e>0$, there exists a unique minimizer of $E$ under the constrain $(j_1,j_2) \in[j_1^+,j_1^-]\times [j_2^+,j_2^-]$.
\end{lemma}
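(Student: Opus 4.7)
Existence is immediate from compactness of the rectangle $[j_1^+,j_1^-]\times[j_2^+,j_2^-]$ and continuity of $E$. For uniqueness, note that $E$ is a quadratic function with constant Hessian
\[
H = \begin{pmatrix} 2 & 2\gamma \\ 2\gamma & 2 \end{pmatrix},
\]
so $\det H = 4(1-\gamma^{2})$. When $\gamma\in[0,1)$, $H$ is positive definite and $E$ is strictly convex, so the minimizer on any convex compact set is unique; this disposes of the easy case.

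The delicate case is $\gamma>1$. I would shift to $s_k = j_k - j_k^+$, so that the rectangle becomes $[0,\ell_1]\times[0,\ell_2]$ with $\ell_k=j_k^--j_k^+$, and
\[
E = s_1^2+s_2^2+2\gamma s_1 s_2 + a s_1 + b s_2 + \mathrm{const}, \qquad a=\epsilon-\ell_1-\gamma\ell_2,\quad b=\epsilon-\ell_2-\gamma\ell_1.
\]
Diagonalizing via $u=s_1+s_2$, $v=s_1-s_2$, the quadratic part becomes $\tfrac{1+\gamma}{2}u^{2}+\tfrac{1-\gamma}{2}v^{2}$, strictly convex in $u$ and strictly concave in $v$, and the linear coefficient of $v$ is $B=(a-b)/2=(\gamma-1)(\ell_1-\ell_2)/2$, which is nonzero precisely because $\ell_1<\ell_2$. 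Concavity in $v$ forces the minimum to lie on the boundary of the rectangle.

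For each admissible $u$, the minimum of $E(u,\cdot)$ over the $v$-range is attained at one of the two endpoints $v_{\pm}(u)$, and a short calculation shows that the selection is governed by the sign of $v_{+}(u)+v_{-}(u)-(\ell_1-\ell_2)$. As $u$ sweeps from $0$ to $\ell_1+\ell_2$, the corresponding optimum traces the four edges of the rectangle in turn, and the value function $\phi(u):=\min_{v}E(u,v)$ is continuous and piecewise a strictly convex univariate quadratic in $u$. Each of its four pieces has a unique minimum, giving at most four candidates for the global minimizer of $E$.

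The main obstacle is ruling out ties between these candidates. Using the identity
\[
E(\ell_1,0)-E(0,\ell_2)=\epsilon(\ell_1-\ell_2)<0
\]
together with the analogous opposite-edge shifts (each proportional to $\epsilon\ell_k(1-\gamma)\neq 0$), I would proceed with a short case analysis on whether the unconstrained edge-minima fall inside the respective edges or at corners, driven by the thresholds $(\gamma-1)\ell_1$ and $\gamma\ell_2-\ell_1$ for $\epsilon$. This analysis shows that the only potential tie — which occurs at $u_0=(\ell_1+\ell_2)/2$, where two distinct points on adjacent edges realize the same $\phi$-value — lies strictly above the global minimum of $\phi$. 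Hence the optimal $u^*$, and consequently the minimizer $(j_1^*,j_2^*)$, is unique.
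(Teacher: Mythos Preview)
Your setup is sound and genuinely different from the paper's argument: the Hessian computation, the diagonalization $u=s_1+s_2$, $v=s_1-s_2$, the observation that concavity in $v$ pushes the minimum to the boundary, and the identification of the switching value $u_0=(\ell_1+\ell_2)/2$ are all correct. It is also true (and worth noting, since you do not justify it) that each of $\phi_+(u)=E(u,v_+(u))$ and $\phi_-(u)=E(u,v_-(u))$ is strictly convex on the whole interval $[0,\ell_1+\ell_2]$: the two quadratic pieces of $\phi_+$ meet at $u=\ell_1$ with an upward kink of size $(\gamma-1)(\ell_1+\ell_2)>0$, and similarly for $\phi_-$ at $u=\ell_2$.

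The gap is in the last paragraph. You write ``I would proceed with a short case analysis'' and then assert the conclusion, but the analysis is not carried out. More importantly, your framing of the obstruction is too narrow. The danger is not only that two distinct boundary points share the same $u$-value $u_0$; it is that $\phi$, being the \emph{minimum} of two strictly convex functions $\phi_+$ and $\phi_-$, need not itself be convex and could attain its global minimum at two different $u$-values, one in $[0,u_0]$ along $\phi_+$ and one in $[u_0,\ell_1+\ell_2]$ along $\phi_-$. Ruling this out is exactly the heart of the matter, and your corner identity $E(\ell_1,0)-E(0,\ell_2)=\epsilon(\ell_1-\ell_2)$, while correct, does not by itself compare $\min\phi_+$ with $\min\phi_-$. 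The ``opposite-edge shifts proportional to $\epsilon\ell_k(1-\gamma)$'' are not explained and I was unable to reproduce them.

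For comparison, the paper treats $\gamma>1$ geometrically. It locates the saddle point $(j_1^*,j_2^*)$ explicitly and splits into two cases. If the saddle lies outside the open rectangle, monotonicity forces the unique minimum at the bottom-left corner. If the saddle lies inside, the paper exploits two symmetries of $E$: the even reflection across the line $j_1-j_2=j_1^*-j_2^*$ (which, using $\ell_1<\ell_2$, maps the relevant portion of the left edge into the interior) and a central reflection about $(j_1^0,j_2^0)$ combined with $\epsilon>0$ (which eliminates the top side). These two moves show the minimizer must lie on the set $B=[j_1^*,j_1^-]\times\{j_2^+\}\cup\{j_1^-\}\times[j_2^+,j_2^*]$ --- in your language, on the $\phi_+$-arc --- after which a single convexity/hyperbola argument yields uniqueness without further case distinctions. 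This is precisely the step your sketch is missing: a clean reason why the $\phi_-$-arc (the top and left edges) cannot carry a minimizer.
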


\begin{proof}
    We have that the feasible set is compact, hence we only need to focus on the uniqueness of the minimizer. If $\gamma\in[0,1)$ we have a strictly convex function being minimized over a convex set, so the claim is immediate. Let us assume from now on that $\gamma>1$.
    
    The function $E$ has a unique critical point, which is of saddle type. As a result, the minimizers must lie on the boundary of the rectangle $[j_1^+,j_1^-]\times [j_2^+,j_2^-]$. We consider now two cases based on the location of the critical point of $E$. This critical point, denoted by $(j_1^*,j_2^*)$, is computed as follows
    \[
    j_k^* := j_k^0 - \frac{\e}{2(\gamma+1)}, \qquad j_k^0 := \frac{j_k^++j_k^-}{2}.
    \]
    See Figure \ref{fig:local_min}.

    \begin{figure}
        \centering
        \includegraphics[width=0.5\linewidth]{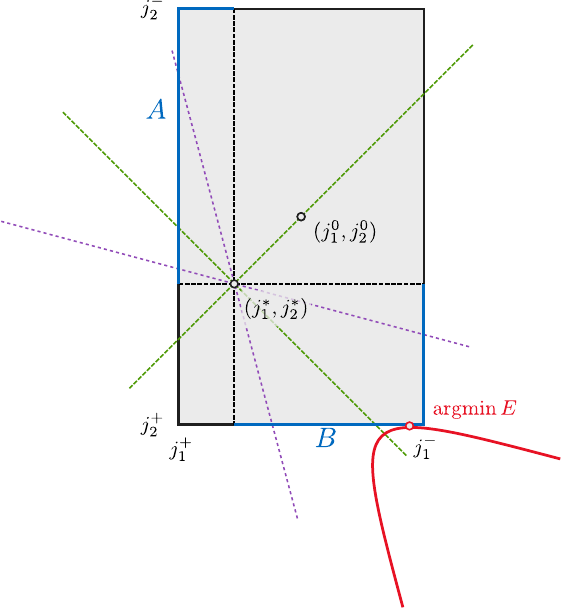}
        \caption{Illustration of the minimization of the quadratic function $E$ over the rectangle $[j_1^+,j_1^-]\times [j_2^+,j_2^-]$.}
        \label{fig:local_min}
    \end{figure}
    
    Notice that
    \[
    E(j_1^*+j_1,j_2^*+j_2) = \frac{1}{2}j_1^2+\frac{1}{2}j_2^2+\gamma j_1j_2 + E(j_1^*,j_2^*).
    \]
    The positive and negative eigenspaces of this quadratic form are parallel to the lines $j_1-j_2=0$ and $j_1+j_2=0$ respectively.

    \textbf{Case 1:} Assume that $(j_1^*,j_2^*) \in \R^2\sm ((j_1^+,j_1^-)\times (j_2^+,j_2^-))$. In this scenario, for any $j_1\in [j_1^+,j_1^-]$, the function $j_2\mapsto E(j_1,j_2)$ is increasing over the interval $[j_2^+,j_2^-]$, and the same also holds if we fix $j_2 \in [j_2^+,j_2^-]$ and vary $j_1 \in [j_1^+,j_1^-]$ instead. This implies that the bottom-left corner $(j_1^+,j_2^+)$ is the only minimizer in this case.

    \textbf{Case 2:} Assume that $(j_1^*,j_2^*) \in (j_1^+,j_1^-)\times (j_2^+,j_2^-)$. In this case we have that
    \begin{align*}
    &\emptyset \neq \{E<E(j_1^*,j_2^*)\} \cap \p([j_1^+,j_1^-]\times [j_2^+,j_2^-]) \ss A\cup B,\\
    &A:= \{j_1^+\}\times [j_2^*,j_2^-] \cup [j_1^+,j_1^*]\times \{j_2^+\},\\
    &B := [j_1^*,j_1^-]\times\{j_2^+\} \cup \{j_1^-\}\times [j_2^+,j_2^*]. 
    \end{align*}
    We will show first that there can not be minimizers on the upper-left set $A$.

    Notice that $E$ has an even symmetry over the line $j_1-j_2=j_1^*-j_2^*$
    \[
    E(j_2+j_1^*-j_2^*,j_1+j_2^*-j_1^*) = E(j_1,j_2).
    \]
    Using that $\ell_1<\ell_2$, we get that if $(j_1,j_2) \in \{j_1^+\}\times (j_2^0 - \ell_1/2,j_2^-)$, then $(j_2+j_1^*-j_2^*,j_1+j_2^*-j_1^*)$ is in the interior of the rectangle. As a consequence, minimizers can not intersect this segment which covers the left side of $A$.
    
    Let $Q(j_1,j_2) := E(j_1,j_2) - \e(j_1+j_2)$. Notice that $(j_1^0,j_2^0)$ is the critical point of $Q$ and
    \[
    Q(j_1^0+j_1,j_2^0+j_2) = \frac{1}{2}j_1^2+\frac{1}{2}j_2^2+\gamma j_1j_2 + Q(j_1^0,j_2^0).
    \]
    Then we have the following symmetry around $(j_1^0,j_2^0)$
    \begin{align*}
    E(j_1^0-j_1,j_2^0-j_2)
    &= Q(j_1^0-j_1,j_2^0-j_2) + \e((j_1^0-j_1)+(j_2^0-j_2))\\
    &= Q(j_1^0+j_1,j_2^0+j_2) + \e((j_1^0-j_1)+(j_2^0-j_2))\\
    &= E(j_1^0+j_1,j_2^0+j_2) - 2\e(j_1+j_2).
    \end{align*}
    The point $(j_1^0-j_1,j_2^0-j_2) \in \p([j_1^+,j_1^-]\times[j_2^+,j_2^-])$ if and only if $(j_1^0+j_1,j_2^0+j_2) \in \p([j_1^+,j_1^-]\times[j_2^+,j_2^-])$. So, by the previous identity, and using that $\e>0$, we get that the minimizers have to be contained over the intersection of $\p([j_1^+,j_1^-]\times[j_2^+,j_2^-])$ with the half space $\{j_1+j_2\leq j_1^0+j_2^0\}$. This implies that there are not minimizers on the top side of the rectangle.

    Let $m$ be the minimum of $E$ over $[j_1^+,j_1^-]\times[j_2^+,j_2^-]$. To conclude the proof we use that the set $\{(j_1,j_2)\in \R^2 \ | \ E(j_1,j_2) \leq m\}$ is bounded by two hyperbolas. Only one of them is inside the quadrant $\{(j_1,j_2) \in \R^2 \ | \ j_1\geq j_1^0, j_2\leq j_2^0\}$ and can touch $B$ at a unique point (by a convexity argument).
\end{proof}

\begin{remark}
    If $\ell_1=\ell_2$ and $\gamma>1$ we have two minimizers if the critical point of $E$ belongs to $(j_1^+,j_1^-)\times(j_2^+,j_2^-)$. This happens because of the symmetry around the line $j_1-j_2=j_1^*-j_2^*$.

    If $\e=0$ we have that the critical point of $E$ is the center of $[j_1^+,j_1^-]\times[j_2^+,j_2^-]$. Also in this case we find two symmetric minimizers around the line $j_1-j_2=j_1^*-j_2^*$.

    Finally, if $\gamma=1$ we get that the set of minimizers of $E$ over $\R^2$ is the line $j_1+j_2=j_1^*+j_2^*$. If this line intercepts $(j_1^+,j_1^-)\times (j_2^+,j_2^-)$ we obtain an infinite number of minimizers. Otherwise we only get one minimizer at the bottom-left corner.
\end{remark}

We apply Algorithm \ref{alg:2roads} to compute an approximate solution of \eqref{eq:eq_2roads2} for some given input parameters $T$, $m_1$, $m_2$, $\e$, $\gamma$, $j_1$, and $j_2$ with the boundary conditions $j_k = m_k\mathbbm 1_{\{0,T\}}$. The main step consists on minimizing $E$ for some given rectangle, referred to as the local minimization. According to Lemma \ref{lem:2roads} the result of this local minimization is unique, under certain assumptions.

When $\gamma<1$ the problem is strictly convex and easy to solve. For $\gamma>1$ the minimizer is restricted to a one dimensional set then it is can also be solved efficiently. To break the ties in the exceptional cases—specifically, when $\gamma = 1$, $\epsilon = 0$, or $\ell_1 = \ell_2$—we approximate the minimizer by considering solutions obtained under conditions where $\gamma > 1$, $\epsilon > 0$, and $\ell_1 < \ell_2$. In this way we guarantee a unique selection from the set:
\[
\argmin\{E(j_1,j_2) \ | \ (j_1,j_2) \in [j_1^+,j_1^-]\times[j_2^+,j_2^-]\}.
\]

\begin{algorithm}
\caption{Particular solutions of \eqref{eq:eq_2roads2}}\label{alg:2roads}
\begin{algorithmic}
\State \( H^T_{\text{prev}}, H^T_{\text{curr}} \gets \infty, 0 \)

\While{ \( H^T_{\text{prev}} \neq H^T_{\text{curr}} \)}
    \State \( H^T_{\text{prev}} \gets H^T_{\text{curr}} \)
    \For{ \( t = 1 \) to \( T-1 \) }
        \State \( j_1^+, j_1^-, j_2^+, j_2^- \gets j_1(t+1), j_1(t-1), j_2(t+1), j_2(t-1) \)
        \State \( j_1(t), j_2(t) \gets \argmin\{E(j_1,j_2) \ | \ (j_1,j_2) \in [j_1^+,j_1^-]\times[j_2^+,j_2^-]\}\)
    \EndFor
    \State \( H^T_{\text{curr}} \gets H^T(j_1,j_2) \)
\EndWhile
\end{algorithmic}
\end{algorithm}

A python implementation is given in \cite{2roads}

\begin{center}
    \href{https://github.com/hchanglara/2roads}{https://github.com/hchanglara/2roads}.
\end{center}

The Figure \ref{fig:2roads} shows 16 plots for $j_1$ and $j_2$ with $T=10$, $m_1=2$, $m_2=3$, $\e = 0.1$ and $\gamma = k\d \gamma$ for $k\in \{0,\ldots,15\}$ and $\d\gamma = 2/15 \sim 0.1333$. We compute first the case $\gamma=2$ starting with $j_k = m_k \mathbbm 1_{\{1,\ldots,(T-1)\}}$. In every subsequent case we decreased $\gamma$ by $\d\gamma$ and used the previous solution as initial value in the iteration.

We appreciate some interesting phenomena in these plots. The qualitative behavior of the solutions changes at $\gamma=1$, this is expected as the functional $H^T$ changes from being strictly convex for $\gamma \in(0,1)$, to being a quadratic function with negative eigenvalues for $\gamma>1$. 

We also notice that for $\gamma>1$, $\max\{Dj_1,Dj_2\}=0$ happens almost all the time. Indeed, in the proof of Lemma \ref{lem:2roads} we observed that for $\gamma>1$, the minimizer of $E$ must lie on the boundary of the rectangle $[j_1^+,j_1^-]\times[j_2^+,j_2^-]$. Specifically, it will be located on either the bottom or right side if $\ell_1<\ell_2$. Whenever we get the bottom side we have that $D^+j_2=0$, meanwhile on the right side we get $D^-j_1=0$. This behavior is also expected when $\gamma\to \8$, given that if $\max\{Dj_1,Dj_2\}<0$ then we get an interaction term $\gamma D^-j_1D^-j_2$ arbitrarily large. On the other hand, even for $T=2$ one may have that $\max\{Dj_1,Dj_2\}=0$ does not hold for every $t$, as it can be checked by hand as in the proof of the Lemma \ref{lem:2roads}.

\clearpage
\vspace*{\fill} 
\begin{figure}[h!]  
    \centering
    \includegraphics[width=\linewidth]{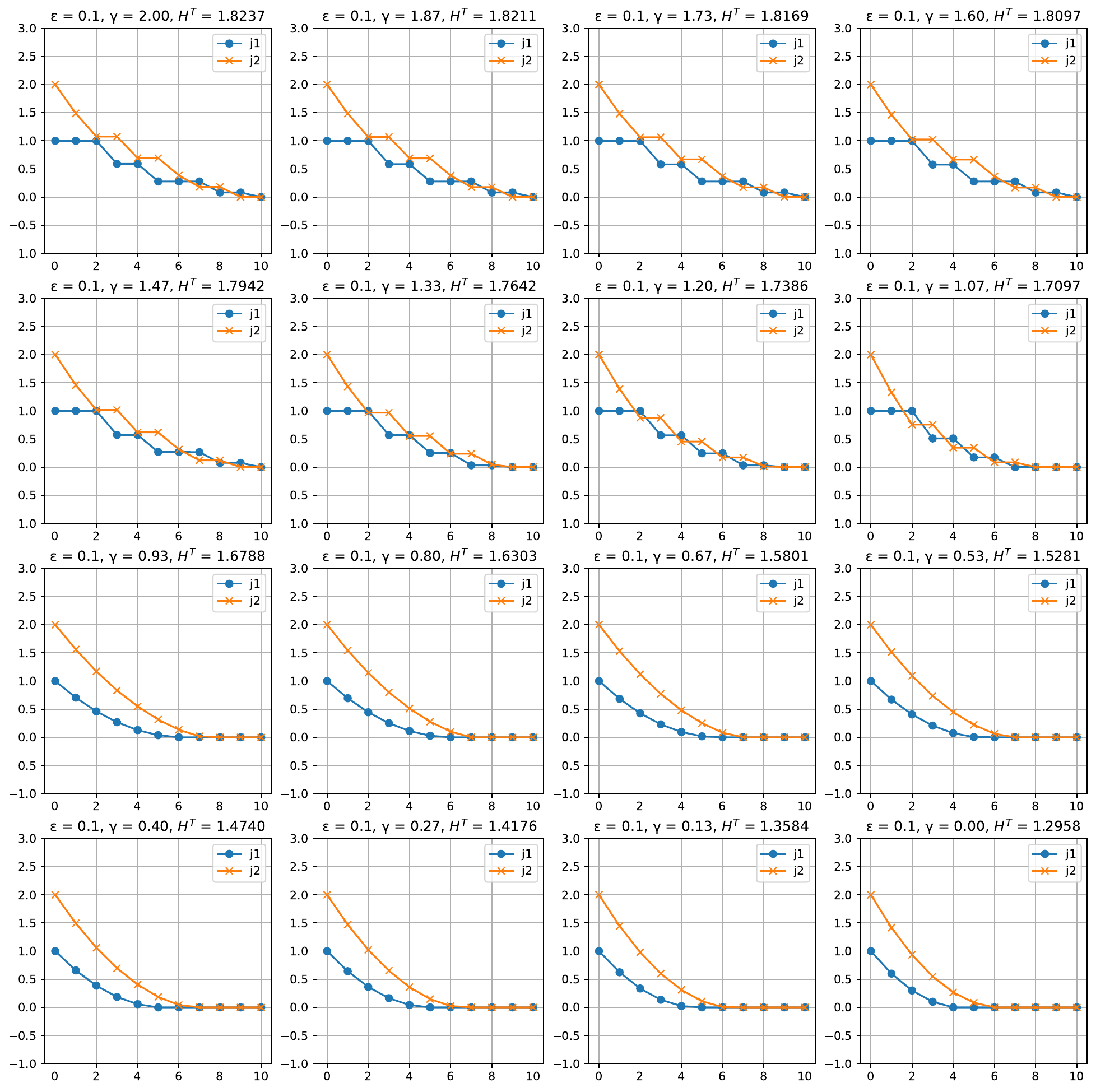}
    \caption{Solutions of the dynamic problem computed by an iterative method for the graph in Figure \ref{fig:intersection}, $T=10$, $m_1=2$, $m_2=3$, $\e=0.1$, and $\gamma$ decreasing uniformly from 2 to 0.}
    \label{fig:2roads}
\end{figure}
\vspace*{\fill} 
\clearpage

\section{Appendix}\label{sec:appendix}

\begin{lemma}[Integration by parts]\label{lem:5}
For every $u\in\R^N$ and $i\in\R^E$
\begin{align*}
\sum_{x\in N} u(x)\operatorname{div}i(x) = -\sum_{e\in E} i(e)Du(e).
\end{align*}
\end{lemma}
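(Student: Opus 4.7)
The plan is to perform a direct computation, expanding the divergence using its definition, swapping the order of summation, and recognizing the resulting expression as $-Du$.

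First, I would substitute the definition of the divergence into the left-hand side to obtain
\[
\sum_{x\in N} u(x)\operatorname{div}i(x) = \sum_{x\in N} u(x)\!\left(\sum_{e^-=x} i(e) - \sum_{e^+=x} i(e)\right).
\]
To reorganize the sum over $x$ into a sum over $e$, I would rewrite the inner sums using indicator functions:
\[
\sum_{x\in N} u(x)\operatorname{div}i(x) = \sum_{x\in N}\sum_{e\in E} u(x)\,i(e)\,\bigl(\mathbbm 1_{\{e^-=x\}} - \mathbbm 1_{\{e^+=x\}}\bigr).
\]

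Next, I would apply Fubini (just swapping two finite sums, since $N$ and $E$ are finite) to pull the sum over edges to the outside:
\[
\sum_{x\in N} u(x)\operatorname{div}i(x) = \sum_{e\in E} i(e) \sum_{x\in N} u(x)\bigl(\mathbbm 1_{\{e^-=x\}} - \mathbbm 1_{\{e^+=x\}}\bigr).
\]
For each fixed $e\in E$, exactly one value of $x$ contributes to each indicator, namely $x=e^-$ and $x=e^+$ respectively, so the inner sum collapses to $u(e^-)-u(e^+) = -Du(e)$. Substituting this back yields the desired identity
\[
\sum_{x\in N} u(x)\operatorname{div}i(x) = -\sum_{e\in E} i(e)\,Du(e).
\]

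There is no real obstacle here: the identity is essentially the statement that $-\operatorname{div}$ is the transpose of $D$ with respect to the canonical inner products on $\R^N$ and $\R^E$, and the argument is just a careful bookkeeping of the incidence relation between $N$ and $E$. The only thing to be mindful of is that self-loops (edges with $e^- = e^+$) contribute zero to both sides, which is automatic from the formulas above.
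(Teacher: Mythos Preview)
Your proof is correct and follows essentially the same approach as the paper's own proof: expand the divergence, introduce indicator functions, swap the two finite sums, and collapse the inner sum to $u(e^-)-u(e^+)=-Du(e)$. Your additional remark about self-loops is a nice sanity check, though it is not needed for the argument to go through.
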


\begin{proof}
    Follows by a computation
    \begin{align*}
    \sum_{x\in N} u(x)\div i(x) &= \sum_{x\in N}\1\sum_{e^-=x} i(e)u(x) -\sum_{e^+=x} i(e)u(x)\2\\
    &= \sum_{x\in N}\sum_{e\in E} i(e)u(x)(\mathbbm 1_{e^-}(x)- \mathbbm 1_{e^+}(x))\\
    &= \sum_{e\in E}\sum_{x\in N} i(e)u(x)(\mathbbm 1_{e^-}(x)- \mathbbm 1_{e^+}(x))\\
    &= \sum_{e \in E} i(e)(u(e^-) -u(e^+))\\
    &= -\sum_{e\in E} i(e)Du(e).
    \end{align*}
\end{proof}

\begin{lemma}[Strong maximum principle]\label{lem:mp}
    Let $b\geq 1$ be a real number, and $t_0<t_1$ be two integers. If $u\colon\{t_0,\ldots,t_1\}\to \R$ is such that
    \[
    \begin{cases}
    u(t+1) - 2bu(t) + u(t-1) \geq 0 \text{ for }t\in \{(t_0+1),\ldots,(t_1-1)\},\\
    u(t) \leq 0 \text{ for } t\in\{t_0,t_1\}.
    \end{cases}
    \]
    then $u(t) \leq 0$ for $t\in \{t_0,\ldots,t_1\}$. Moreover, if $u(t)=0$ for some $t \in \{(t_0-1),\ldots,(t_1+1)\}$, then $u=0$ for every $t\in \{t_0,\ldots,t_1\}$.
\end{lemma}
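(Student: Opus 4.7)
The plan is to split the argument into the weak maximum principle (to obtain $u \leq 0$) and the strong maximum principle (to propagate a zero from the interior). Throughout I write the sub-solution inequality in the more convenient form $u(t+1) + u(t-1) \geq 2b\,u(t)$ for $t\in\{t_0+1,\ldots,t_1-1\}$.

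For the weak maximum principle, I would argue by contradiction. Suppose $M := \max_{t_0\leq t\leq t_1} u(t) > 0$. Since $u(t_0),u(t_1)\leq 0 < M$, the maximum is attained only at interior indices, so the set $\{t : u(t)=M\}$ is a non-empty subset of $\{t_0+1,\ldots,t_1-1\}$. Let $t^*$ be its smallest element. Then $u(t^*-1) < M$ strictly (by minimality of $t^*$) and $u(t^*+1)\leq M$. Consequently, using $b\geq 1$ and $M>0$,
\[
u(t^*+1) + u(t^*-1) < 2M \leq 2bM = 2b\,u(t^*),
\]
which contradicts the sub-solution inequality at $t^*$. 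Hence $M\leq 0$, i.e.\ $u\leq 0$ on $\{t_0,\ldots,t_1\}$.

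For the strong maximum principle, interpret the hypothesis as $u(t^*)=0$ at some interior $t^*\in\{t_0+1,\ldots,t_1-1\}$ (the statement as written with $t^*\in\{t_0-1,\ldots,t_1+1\}$ must mean an interior index, since a boundary zero evidently cannot force $u\equiv 0$, e.g.\ $u(t_0)=u(t_1)=0$ and $u<0$ in between is consistent with the hypotheses for $b$ large enough). Given the weak maximum principle, $u(t^*\pm 1)\leq 0$, and the inequality $u(t^*+1)+u(t^*-1)\geq 2b\cdot 0 = 0$ forces both non-positive summands to vanish. Iterating this local propagation, any interior zero spreads to both neighbours, and hence (by induction on distance from $t^*$) $u$ vanishes throughout $\{t_0+1,\ldots,t_1-1\}$. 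To reach the boundary, apply the inequality at $t_0+1$, which yields $u(t_0)+u(t_0+2)\geq 2b\,u(t_0+1)=0$; since both terms are $\leq 0$ and $u(t_0+2)=0$, we conclude $u(t_0)=0$, and symmetrically $u(t_1)=0$.

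The only real subtlety is the propagation step, where one must be careful that the sub-solution inequality is only assumed at interior indices, so the argument proceeds in two stages: first propagate zeroes among interior points using the inequality at each interior point, then transfer to the boundary via the inequality at $t_0+1$ and $t_1-1$. Edge cases where the interior is empty or a single point are either trivial (nothing to propagate) or handled in one step by the boundary-transfer argument just described.
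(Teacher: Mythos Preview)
Your proof is correct and, for the weak maximum principle, essentially identical to the paper's: both pick the leftmost interior maximizer $t^*$ and derive a contradiction from $u(t^*-1)<M$, $u(t^*+1)\leq M$, and $b\geq 1$. For the strong part you argue by direct propagation (at an interior zero the inequality forces both neighbours to vanish, then iterate), whereas the paper argues by contradiction (assume a negative value survives, take the rightmost one below $t_2$, and obtain a contradiction at the adjacent index). These are two phrasings of the same local mechanism and neither buys anything over the other. Your observation that the stated range $\{t_0-1,\ldots,t_1+1\}$ must really mean the interior $\{t_0+1,\ldots,t_1-1\}$ is correct; the paper's own proof implicitly uses only interior zeroes, and a boundary zero clearly cannot force $u\equiv 0$.
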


\begin{proof}
    Assume by contradiction that $M:= \max_t u(t)>0$. Let $t^* := \min\{t>t_0 \ | \ u(t)=M\}$. This number is well defined unless $t_1=t_0+1$ in which case the theorem is void.

    Then we get the following contradiction when we evaluate the equation at $t^*$
    \begin{align*}
    0 &\leq u(t^*+1) - 2bu(t^*) + u(t^*-1)\\
    &= (u(t^*+1) - u(t^*)) + (u(t^*-1) - u(t^*)) - 2(b-1)u(t^*) < 0. 
    \end{align*}

    For the second part of the lemma let $t_2\in \{(t_0-1),\ldots,(t_1+1)\}$ such that $u(t_2) = 0$. Let us assume that $t_2>t_0+1$ and show that $u=0$ in $\{t_0,\ldots,t_2-1\}$. Assume by contradiction that $u<0$ somewhere in $\{t_0,\ldots,t_2-1\}$, then define $t^* := 1+\max\{t \in \{t_0+1,\ldots,t_2-1\}\ | \ u(t)<0\}$. By the same computation as in the first part we get a contradiction. The same reasoning over $\{t_2+1,\ldots,t_1\}$ allow us to finish the proof.
\end{proof}

\bibliographystyle{plainurl}
\bibliography{mybibliography}

\end{document}